\providecommand{\U}[1]{\protect\rule{.1in}{.1in}}
\newtheorem{theorem}{Theorem}
\newtheorem{lemma}[theorem]{Lemma}
\newtheorem{remark}[theorem]{Remark}
\newenvironment{proof}[1][Proof]{\noindent\textbf{#1.} }{\ \rule{0.5em}{0.5em}}
\renewcommand{\theequation}{\thesection.\arabic{equation}}
\begin{document}
\bibliographystyle{abbrv}
\title{
Fluid accumulation in thin-film flows driven by surface 
tension and gravity (I): Rigorous analysis of a drainage equation.
}

\author{
C. M. Cuesta\footnote{Instituto de Ciencias Matem\'{a}ticas CSIC-UAM-UC3M-UCM. 
C/ Nicol\'{a}s Cabrera 15, 28049, Madrid, Spain.},
J.\ J. L. Vel\'{a}zquez\footnote{Institut f\"ur Angewandte Mathematik, Universit\"at Bonn. 
Endenicher Allee 60, 53115 Bonn, Germany.}
}

\date{}
\maketitle
\begin{abstract}
We derive a boundary layer equation describing accumulation regions within a thin-film 
approximation framework where gravity and surface tension balance. As part of the analysis of this 
problem we investigate in detail and rigorously 
the following drainage equation 
\[
\left(\frac{d^3\Phi}{d\tau^3} +1 \right)\Phi^3 = 1 \,.
\]  
In particular, we prove that all solutions that do not satisfy $\Phi\to 1$ as 
$\tau \to\infty$ are oscillatory, and that they oscillate in a very specific way as 
$\tau\to\infty$. This result and the method of proof will be used in 
the analysis of solutions of the afore mentioned boundary layer problem.
\end{abstract}

\section{Introduction.}
The analysis of thin films coating surfaces is a basic problem in fluid 
mechanics and has many applications in industrial processes. In particular flows 
on surfaces with topography had been extensively studied (see, for instance, 
\cite{CrasterMatar}, \cite{Howell}, \cite{JChiniK}, \cite{KalliadasisHomsy}, 
\cite{MyersRev}, \cite{Royetal}, \cite{StockerHosoi}). Usually, the presence 
of regions where the liquid accumulates plays an important role in the overall 
dynamics of these flows. In many situations, the arrival of a thin layer of 
fluid to a region where the liquid accumulates or reduces its velocity, 
results in the formation of oscillations on the fluid interface 
(see e.g. \cite{benilov}, \cite{BertozziPhF}, \cite{bretherton}, \cite{WilsonJones} and references therein). 
On the other hand, it was claimed in \cite{Richards} that the pressure difference between phases 
in unsaturated porous media flow, when complete wetting is 
assumed, is mostly due to the regions where the fluid accumulates. 

Our goal is to derive precise mathematical results concerning the liquid 
accumulation regions mentioned above. To this end, we will study a particular 
type of flows. Namely, we are interested in the motion of thin layers of 
liquid moving on an smooth and, for the moment, arbitrary surface. 
Specifically, our goal is to 
understand the dynamics of such layers of liquid in the case in which the 
forces induced by the curvature of the substrate are comparable to 
gravitational forces. For this particular rescaling of the force terms, the liquid tends to 
accumulate in the regions where the gravitational and surface tension forces 
balance. Outside such regions it is possible to simplify the description of 
the fluid motion using a thin-film approximation. However, this approximation 
breaks down in the regions where the difference of the forces induced by 
gravity and surface tension vanish, these are the regions where the fluid tends to accumulate. 
As a consequence, a boundary layer is required to describe the flow there. 

In this paper, we will restrict our attention to stationary flows in two space dimensions. 
In this case, the thin-film approximation for steady flows reduces, to leading order, to
\begin{equation}
\frac{\partial}{\partial s}\left(Q(s)\, h^{3}\right)=0 \label{steady}
\end{equation} 
where $h$ is the (non-dimensional) height of the fluid above the substrate and $Q(s)$ 
is a function that depends on the geometry of the substrate (parametrised by 
its arc-length $s$) and its relative orientation with respect to the direction 
of gravity. 

As long as the function $Q(s)$ has a constant sign it is 
possible to obtain a stationary solution of (\ref{steady}) for any given flux 
describing the flow. However, if the function $Q(s)$ changes 
sign it is not possible to obtain steady flows satisfying globally the 
assumptions required for the validity of the thin-film approximation. We 
remark, that it is possible to assume in this case, using a convenient 
parametrisation of the curve where the fluid lies, that $Q(s)>0$. 
We shall assume this condition 
in most of the regions where the flow takes place, except in a 
very small region where the liquid will tend to accumulate. In such region 
the difference of curvature between the substrate and the interface, that has 
been neglected in the derivation of (\ref{steady}), becomes relevant. 
Nevertheless, the thin-film approximation remains valid in all of the flow considered here. 

We obtain two main results. First, we derive, using asymptotic approximations, 
an equation describing the height of the liquid in the (small) accumulation regions where 
$Q(s) \leq 0$. This equation reads, in suitable non-dimensional variables,
\begin{equation}
 \left( \frac{d^3 H}{d\xi^3}  + \xi^{2} + a \right)  H^{3} = 1\,, \label{capalimite}
\end{equation}
where $H$ is a rescaled fluid height and $\xi$ is a rescaled arc-length. The 
number $a\in\mathbb{R}$ is a parameter that provides information about the 
size of the region where $Q(s)\leq 0$. Equation (\ref{capalimite}) must be solved subject 
to the corresponding matching conditions, namely, $|\xi|^\frac{2}{3}$ as $|\xi| \to \infty$. 
The existence of such solutions is not \textit{a priori} clear, and their existence will be proved in \cite{CV2}. 
The solutions obtained there are the natural candidates to describe the boundary layer arising at the stagnation 
or accumulation points, i.e. the points $s_0$ where $Q(s_{0})=0$. In the analysis 
of (\ref{capalimite}) it will be essential to have a precise description of the solutions 
of the following auxiliary equation:
\begin{equation}
\left( \frac{d^3\Phi}{d\tau^3} + 1 \right) \Phi^{3}= 1\,. \label{WilsonJones}
\end{equation}
Observe that (\ref{capalimite}) is a non-autonomous third order ODE. 
Equation (\ref{WilsonJones}) results when we introduce new variables 
that transform (\ref{capalimite}) into a fourth order
 autonomous system, and that make the matching conditions also autonomous. 
Namely, in the new independent variable $\tau$, (\ref{capalimite}) reduces to 
(\ref{WilsonJones}) as $|\tau|\to \infty$. That means that the limits $\tau\to -\infty$ 
and $\tau\to \infty$ define two invariant subspaces for (\ref{capalimite}) 
where the flow (of the dynamical system) is given by (\ref{WilsonJones}). In these variables, 
the solutions of (\ref{capalimite}) that satisfy the matching conditions 
correspond to a heteroclinic connection (the critical points connected correspond 
to the matching conditions). The existence proof of these heteroclinic connections is done by 
means of a shooting argument and requires detailed information about the flow 
given by (\ref{WilsonJones}), that is the flow near $|\tau|\to \infty$. 

Our second result in this paper thus concerns the rigorous analysis of (\ref{WilsonJones}).
As we shall see later, one of the main features of (\ref{WilsonJones}) is the 
existence of solutions that oscillate with increasing amplitude as $\tau\to\infty$. 
We thus investigate this oscillatory behaviour rigorously in this paper.

Equation (\ref{WilsonJones}) has an independent physical 
interest. It arises in the description of a thin-film free surface flow on a vertical wall 
that is moving downwards with constant velocity (cf. \cite{WilsonJones}). The existence of oscillatory 
solutions with increasing amplitude was observed in \cite{WilsonJones} using 
formal asymptotic. These oscillations are related to the possibility of air 
entrainment in these flows. 
Due to the fast increase of the amplitude of oscillations, 
the validity of the thin-film approximation breaks down 
and the corresponding solutions need to be matched into other flows. 
In fact, in \cite{WilsonJones} the solutions are matched into  
a stationary pool of fluid. 

We observe that the onset of oscillatory interfaces 
with increasing amplitude seems to be a general phenomena of thin-film flows 
arriving at a region where the flow is slower. For example, a similar 
phenomenon is observed in \cite{bretherton}, where the motion of air 
bubbles in a fluid that is flowing inside a tube is analysed. When the tube is 
placed horizontally and its radius is supposed to be small enough as to 
neglect gravity effects, equation (\ref{WilsonJones}), but with a power $2$ 
non-linearity, is derived to describe the flow of fluid separating the bubble 
surface from the tube wall. This flow is then matched into a minimal surface 
describing the front tip of the bubble. We point out that oscillatory behaviour for the 
equations obtained in \cite{bretherton} and in \cite{WilsonJones} have been numerically reproduced in 
\cite{TuckRev}. Another example of such oscillatory behaviour of thin-film flows in the presence of a slower flow 
ahead is provided by thin-film flows on inclined planes (cf. \cite{BertozziPhF}, \cite{BertozziEJAM}). 

We notice that general third order nonlinear ODEs of the form
\begin{equation}
\frac{d^{3}\Phi}{d\tau^{3}} = f(\Phi)\,, \label{generalODE} 
\end{equation} 
arise in many other problems of fluid mechanics and in the present context of thin-film approximations in particular 
(cf. \cite{Eggers}, \cite{EggersII}, \cite{MyersRev}, \cite{MyersSolidII}, \cite{MyersSolidI}, \cite{wilson}). 
Some of them have been studied rigorously. Typically, the type of problem 
that has been considered in the rigorous mathematical literature is the possibility 
of having solutions that vanish at a finite value of $\tau$, as $\tau$ increases. 
For instance, in \cite{BerettaHP} a necessary condition on the non-linearity $f$, such that $\Phi$ vanishes 
in the forward direction is derived. In the particular cases $f(\Phi)=1-1/\Phi^{n}$ with $n=2$ and $n=3$, 
this necessary condition is not satisfied. This implies that the solutions of 
(\ref{generalODE}) are globally defined and remain always positive for 
arbitrarily large values of $\tau$. Qualitative properties of the solutions of 
the equation (\ref{generalODE}) with a non-linearity of the form 
\[ 
f(\Phi)=\frac{v}{\Phi^{n-1}}+\frac{J}{\Phi^{n}}
\]
have been studied in \cite{Boatto}. Some specific non-linearities relevant to 
applications in fluid mechanics are described in \cite{BerettaHP}, 
\cite{Boatto} and \cite{TuckRev}. In this last paper, numerical solutions 
showing oscillatory behaviours for some class of non-linearities are obtained. 
However, to our knowledge, no rigorous result describing the oscillatory 
behaviour for the solutions of (\ref{generalODE}) have been given before.

The paper is organised as follows. In Section~\ref{section:2} we derive (\ref{capalimite}). 
We perform a thin-film approximation on the corresponding Stokes flows, that in two dimensions, 
is written in curvilinear coordinates. 
We derive the leading order problem and analyse the significance of the driving term $Q(s)$. As it was 
pointed out earlier, this coefficient is completely determined by the geometry of the substrate. 
In order to justify our assumption on $Q(s)$, we also prove in this section the existence of curves for 
which $Q(s)$ has the asymptotic behaviour $Q(s)\sim A s^2$ as $s\to 0$ for some $A\in\mathbb{R}$. 
We end the section by deriving the boundary layer problem near a point where  $Q(s)$ vanishes, that is, 
we derive equation (\ref{capalimite}) and the corresponding matching conditions. The expressions 
in curvilinear coordinates of the linear operators needed to derive the next order term in the thin-film 
approximation are given in Appendix~1.

In Section~\ref{section:3} we analyse (\ref{WilsonJones}). We start in 
Section~\ref{section:3.1} with the local behaviour near the critical point and the behaviour of 
solutions on the (one-dimensional) stable manifold, and the ones outside this manifold. For the 
former ones we prove that either $\Phi\to \infty$ as $\tau \to -\infty$ or $\Phi\to 0^+$ as 
$\tau\to (\tau^*)^+$ for a finite $\tau^*$. For the later ones, we prove rigorously that there 
exist a two dimensional family of positive solutions of (\ref{WilsonJones}), defined for 
$-\infty<\tau<\infty$, satisfying $\lim_{\tau\to -\infty} \Phi( \tau)=1$, 
$\lim\sup_{\tau\to\infty}\Phi(\tau)=\infty$ and $\lim\inf_{\tau\to\infty}\Phi(\tau)=0$. 
These solutions thus oscillate with increasing amplitude, a fact observed and described in 
\cite{WilsonJones}. In Section~\ref{section:3.2} we make precise and prove rigorously 
the asymptotic formulae obtained in \cite{WilsonJones} describing the rate of increase 
of the oscillations. Moreover, we prove that this precise type of oscillatory behaviour is an attractor 
for the solutions of (\ref{WilsonJones}), in the sense that every solution of this equation that does not approach 
$\Phi=1$ as $\tau\to\infty$, eventually oscillates in that precise way. 

Finally, we remark that the oscillatory behaviour of solutions of (\ref{WilsonJones}) will be shown to arise due to the 
transition between ranges of $\tau$ in which 
(\ref{WilsonJones}) can be approximated either by the equation 
\begin{equation}\label{SEphi-inf}
\left( \frac{d^3\Phi}{d\tau^3}+ 1 \right)  = 0\,,
\end{equation} 
when $\Phi$ is large, or by the equation
\begin{equation}\label{SEphi0}
\left(\frac{d^3\Phi }{d\tau^3} \right)\Phi^3= 1\,,
\end{equation}
when  $\Phi$ is very small. Equation (\ref{SEphi-inf}) can readily be integrated. 
The analysis of (\ref{SEphi0}) is done in Appendix~2, where the relevant information 
about the solutions is gathered in a theorem. Equation (\ref{SEphi0}) is invariant under the transformation 
$(\tau,\Phi)\to (l^{\frac{4}{3}}\tau,l \Phi)$ for any $l\in\mathbb{R}$ and can thus be analysed by means of a related transformation that reduces the 
study of this third order equation to the study of a planar autonomous system of ODEs.

\section{Derivation of the model}\label{section:2}
We first derive the equations describing the motion of a fluid in a stationary
regime flowing down a substrate. Since we restrict ourselves to
two-dimensional flows, the liquid phase fills a two-dimensional domain 
$\Omega$. We assume that the fluid wets completely the surface. Then, the 
boundary of this domain can be decomposed in two disjoint pieces 
$\partial\Omega_{s}$ and $\partial\Omega_{f}$. The portion of the boundary 
$\partial\Omega_{s}$ denotes the intersection of the liquid with the fixed 
substrate. On the other hand, $\partial\Omega_{f}$ is the free interface 
separating the liquid and the gas phase. 

We let $h_{c}$ denote the characteristic height of the liquid over the 
surface, and $R_{s}$ is the typical radius of curvature of $\partial\Omega_{s}$. 
We henceforth assume that $\frac{h_{c}}{R_{s}}$ is small in most part of the flow. 
This allows us to use a thin-film approximation. In particular, this means 
that the normal component of the velocity is very slow compared the to tangential one. 

We deal with very viscous flows, thus we assume that the flow takes place at a very low Reynolds number 
($\operatorname{Re}\ll 1$) and, therefore, that inertia effects are negligible compared to viscous forces. 
Taking into account our initial assumption, the Reynolds number is $\operatorname{Re}=h_{c}^2v_{t}/(R_s\nu)$. 
As it is usual in the study of viscous flows under the thin-film approximation, the 
order of magnitude of the tangential velocity along the surface is
\begin{equation}\label{A1}  
v_t \approx \frac{g \, h_c^2}{\nu}\,,
\end{equation}
where $\nu$ is the kinematic viscosity. The total flux of fluid per unit length 
along a surface transversal to the substrate is then of order 
$J\approx\frac{gh_{c}^{3}}{\nu}$. With this, we observe that in particular 
\[
\operatorname{Re}\approx\frac{g\,h_c^4}{R_s \nu^2}\approx\frac{h_c}{R_s}\frac{J}{\nu}\,,
\]
i.e. the main assumption on the ratio of the characteristics lengths allows rather \textit{fast} very viscous flows (cf. \cite{acheson}).

Under these assumptions we can describe the flow using the free surface Stokes 
equations. In Cartesian coordinates $(x,y)$ in $2D$ we take the gravity vector 
pointing downwards and so $\mathbf{g}=-g\mathbf{e}_y$ where 
$\mathbf{e}_y=(0,1)^T$ and $g$ is the gravity constant. 

We non-dimensionalise the problem with the length $R_s$, as 
follows:
\begin{equation}\label{nondim}
\mathbf{g} = -g\mathbf{e}_y \,,\ \mathbf{x} = R_s \mathbf{x}^{\ast}\,,
\ \mathbf{u} = \frac{g \, R_s^2}{\nu}\mathbf{u}^{\ast}\,,\ p=\rho g R_{s}p^{\ast}\,,
\ K=\frac{1}{R_s} K^{\ast}\,,\ t=\frac{\nu}{g \,R_s} \,t^{\ast}\ ,
\end{equation}
where $\mathbf{u}$ is the velocity field, $p$ the fluid pressure, $t$ is time and $K$ is the curvature of the free 
surface ($\partial\Omega_f$). In these non-dimensional variables, where we drop the $*$ for simplicity of notation, 
the Stokes flow reads
\begin{equation}\label{stokes}
\Delta\mathbf{u}-\nabla p-\mathbf{e}_{y}    =0\,,
\quad \ \mbox{div}\mathbf{u}    =0\,,\quad\mbox{in}\quad\Omega\,,
\end{equation}
and is supplemented with kinematic and surface tension boundary conditions on the free surface:
\begin{equation}
\label{surface:tension2}
v_{N}   =\mathbf{u}\cdot\mathbf{N} \,,\quad \ 
(\nabla\mathbf{u}+\left(  \nabla\mathbf{u}\right)^{T})\mathbf{N}-p\mathbf{N}  
= \frac{1}{B}\,K\,\mathbf{N} \,,\quad\mbox{on}\quad\partial\Omega_{f}\,,
\end{equation}
and with the no-slip boundary condition on the substrate: 
\begin{equation}
\mathbf{u}=0\quad\mbox{on}\quad\partial\Omega_{s}\,. \label{noslip}
\end{equation}
We notice that the order of magnitude of the velocity is given by (\ref{A1}). 
Since in problems where the thin-film 
approximation is valid the main component of the velocity is the tangential 
one, using (\ref{nondim}), it then follows that the order of magnitude of the (non-dimensional)
tangential component of $\mathbf{u}$ is $\left(  \frac{h_{c}}{R_{s}}\right)^{2}\ll 1$. The 
other dimensionless parameter appearing in (\ref{stokes})-(\ref{surface:tension2}) 
is the Bond number:
\[
B=\frac{\rho g \,R_s^2}{\sigma}=\frac{\rho g\,R_s}{\sigma/R_s}\,, 
\]
that measures the relative size of gravity and surface tension or, more 
precisely, the ratio between the hydrostatic pressure due to changes of height 
of order $R_s$ and the changes of pressure induced by the surface tension on 
surfaces with curvature radius $R_s$.

In this paper we assume that $B$ is of order one. In particular, since 
$|\mathbf{u}| \ll 1$, the second equation in  
(\ref{surface:tension2}) implies that $p\approx\frac{1}{B}\,K$, i.e. 
curvature and gravity effects are of the 
same order of magnitude; that is the characteristic feature of the limit 
considered in this paper. 

\subsection{Curvilinear coordinates}\label{section:2.1}
We now proceed to derive the equations describing the height of the liquid
using the thin-film approximation. To this end we reformulate 
(\ref{stokes})-(\ref{surface:tension2}) using a more convenient set of curvilinear 
coordinates. 

Let the substrate be described by a regular curve parametrised by its 
arc-length $s$:
\[
\partial\Omega_{s}=\{\alpha(s)=(x(s),y(s)):\ s\in\mathbb{R}\}\,.
\]
We let $\mathbf{n}$ denote the outer normal vector of $\partial\Omega_{s}$ (thus when 
the substrate is horizontal and $s$ increases, if $(x(s),y(s))$ moves towards 
the right then $\mathbf{n}$ points upwards), i.e. $\mathbf{n}(s)=(-y^{\prime}(s),x^{\prime}(s))$. 
Then the curvature of $\partial\Omega_{s}$ is $k(s)=x^{\prime}(s)y^{\prime\prime}(s)-x^{\prime\prime}(s)y^{\prime}(s)$. 
We shall also denote the tangent vector to $\partial\Omega_{s}$ by $\mathbf{t}(s)=(x^{\prime}(s),y^{\prime}(s))$ 
(observe that then $\det\left(  \mathbf{t}(s),\mathbf{n}(s)\right)  =+1$). We 
introduce the coordinates $(s,d)$ such that a point in space is determined 
relative to the substrate by its distance to it:
\[
\mathbf{X}=(x(s,d),y(s,d))=\mathbf{\alpha}(s)+d\,\mathbf{n}(s)\,.
\]
The corresponding orthonormal basis is then
$\{\hat{\mathbf{e}}_{1}=\mathbf{t},\quad\hat{\mathbf{e}}_{2}=\mathbf{n}\}$, 
which only depends on $s$. We shall let the vector components in curvilinear coordinates have 
indexes $1$ (component tangential to the substrate) and $2$ (component normal 
to the substrate).

The free boundary is described by an unknown function measuring the distance 
to the substrate $d=h(s,t)$, hence it is parametrised by 
\[
\mathbf{X}_{f}(s)=\alpha(s)+h(s)\mathbf{n}(s)\,, 
\]
its tangent and normal vectors are given by
\[
\mathbf{T}=T_{1}\mathbf{t}+T_{2}\mathbf{n}=\frac{(1-k\,h)\mathbf{t}
+\displaystyle{\frac{\partial h}{\partial s}}\mathbf{n}}{
 \left((1-k\,h)^{2}+\displaystyle{\left(  \frac{\partial h}{\partial s}\right)^{2}}\right)^{1/2}
}\,, 
\quad  
\mathbf{N}=N_{1}\mathbf{t}+N_{2}\mathbf{n}=\frac{
-\displaystyle{ \frac{\partial h}{\partial s}}\mathbf{t}+(1-k\,h)\mathbf{n}}{
\left(  (1-k\,h)^{2}+\displaystyle{\left(  \frac{\partial h}{\partial s}\right)  ^{2}}\right)^{1/2}
}\,.
\]

The curvature of the free boundary is
\begin{equation}
K = \frac{k+\displaystyle{\frac{\partial^{2}h}{\partial s^{2}}}-2k^{2} h
+ k^{3}h^{2} - k\,h\displaystyle{\frac{\partial^{2}h}{\partial s^{2}}} 
+ 2k\displaystyle{\left(  \frac{\partial h}{\partial s}\right)^{2}}
+ \frac{dk}{ds}h\displaystyle{\frac{\partial h}{\partial s}}}{
\left((1-k\,h)^{2} + \displaystyle{\left(  \frac{\partial h}{\partial s}\right)^{2}}\right)^{3/2}
}\,. 
\label{curv:fb}
\end{equation}

A transformation of the relevant differential operators to the new coordinates 
is given in e.g. \cite{acheson}. Using this, problem (\ref{stokes})-(\ref{noslip}) in 
the new coordinates becomes
\begin{equation}
\frac{1}{1-k\,d}\left(  \frac{\partial u_{1}}{\partial s}-k\,u_{2}\right)
+\frac{\partial u_{2}}{\partial d}=0\quad\mbox{in}\quad0<d<h\quad
\label{mass:curvi}
\end{equation}
and
\begin{eqnarray}
\frac{k^{\prime}d}{(1-k\,d)^{3}} \left( \frac{\partial u_{1}}{\partial s} -
k\, u_{2}\right) + \frac{1}{(1-k\, d)^{2}} \left( \frac{\partial^{2} u_{1}}{\partial s^{2}}
 -k^{2} u_{1}-2k\frac{\partial u_{2}}{\partial s} -k^{\prime}\,u_{2}\right)  \notag \\
-\frac{k}{1-k\, d}\frac{\partial u_{1}}{\partial d} +\frac{\partial^{2} u_{1}}{\partial d^{2}} 
-\frac{1}{1-k\,d}\frac{\partial p}{\partial s} - \mathbf{e}_y\cdot\mathbf{t}=0\,, 
\label{curv1}\\
\frac{k^{\prime}d}{(1-k\,d)^{3}} \left( \frac{\partial u_{2}}{\partial s} +
k\, u_{1}\right) + \frac{1}{(1-k\, d)^{2}} \left( \frac{\partial^{2} u_{2}}{\partial s^{2}} 
-k^{2} u_{2}+2k\frac{\partial u_{1}}{\partial s} +k^{\prime}\,u_{1}\right)  \notag \\
-\frac{k}{1-k\, d}\frac{\partial u_{2}}{\partial d} +\frac{\partial^{2} u_{2}}{\partial d^{2}} 
-\frac{\partial p}{\partial d}- \mathbf{e}_y\cdot\mathbf{n}=0\,, \label{curve2}
\end{eqnarray}
with
\begin{align*}
((\nabla\mathbf{u}+\nabla\mathbf{u}^{T})\mathbf{N})\cdot\mathbf{N}-p  &
=\frac{1}{B}K\quad\mbox{on}\quad d=h \\
((\nabla\mathbf{u}+\nabla\mathbf{u}^{T})\mathbf{N})\cdot\mathbf{T}  &
=0\quad\mbox{on}\quad d = h 
\end{align*} 
where the full expressions of the tangential and normal components of 
$(\nabla\mathbf{v}+\nabla\mathbf{v}^{T})\mathbf{N}$ are given in Appendix~1. 
Finally, the second equation in (\ref{surface:tension2}) becomes
\begin{equation}\label{kine:curv}
\frac{\partial h}{\partial t}=u_{2}-\frac{u_{1}}{1-k\,d}
\frac{\partial h}{\partial s}\quad\mbox{on}\quad d=h\, . 
\end{equation}

We also observe that using this coordinate system, the flux $J$ per unit 
length along any surface orthogonal to the substrate, can be written in the 
original dimensional set of variables $(\mathbf{u,x})$ (see (\ref{nondim})) as:
\begin{equation}\label{fluxscale} 
J=\frac{g R_{s}^{3}}{\nu}\int_{0}^{h} u_{1}\,dy\,. 
\end{equation}
Obviously, $\int_{0}^{h}u_{1} \, dy$ is just the non-dimensional flux.

\subsection{Thin-film approximation}\label{section:2.2}
We will assume that in most of the fluid we have:
\begin{equation}\label{h0}
\varepsilon:=\frac{h_{c}}{R_{s}}\ll1\,. 
\end{equation}
Combining (\ref{A1}) and (\ref{nondim}) it then follows that, in the non-dimensional variables, 
$u_{1}\approx\varepsilon^{2}\ll 1$, therefore, also that 
$\int_{0}^{h}u_{1}dy\approx\varepsilon^{3}\ll 1$ (and this in particular means, 
in view of (\ref{fluxscale}), that $\frac{J\nu}{gR_{s}^{3}}\approx\varepsilon^{3}\ll 1$). 
We the rescale the variables as follows:
\begin{equation}\label{thin:scale}
d=\varepsilon d^{\ast}\,,\ h=\varepsilon h^{\ast}\,,\ t=\varepsilon t^{\ast}\,,
\ u_{1}=\varepsilon^{2}u_{1}^{\ast}\,,
\ u_{2}=\varepsilon^{3}u_{2}^{\ast}\,, 
\end{equation}
where the last one results, to leading order as $\varepsilon\to 0$, from 
the rescaling of (\ref{mass:curvi}). With these changes, (\ref{mass:curvi})-(\ref{curve2}) reduce, to leading order as 
$\varepsilon\to 0$, to
\begin{equation}\label{lo}
\frac{\partial u_{1}^{\ast}}{\partial s}+
\frac{\partial u_{2}^{\ast}}{\partial d^{\ast}}    =0\,, \quad 
\frac{\partial^{2}u_{1}^{\ast}}{\partial\left(  d^{\ast}\right)  ^{2}}
-\frac{\partial p}{\partial s}-\mathbf{e}_{y}\cdot\mathbf{t}   =0\,, \quad 
-\frac{\partial p}{\partial d^{\ast}}    =0 
\end{equation}
with boundary conditions
\begin{equation}\label{tan:norm:lo}
\frac{\partial u_1^{\ast}}{\partial d^{\ast}}=0\,,\ -p=\frac{1}{B}
K\,,\quad\mbox{on}\quad d^{\ast}=h^{\ast}
\end{equation}
where, from (\ref{curv:fb}),
\begin{equation}\label{curv:expan}
K = k + \varepsilon\left(  \frac{\partial^2 h^{\ast}}{\partial s^2}
+k^2 h^{\ast}\right)  + O(\varepsilon^2)\,. 
\end{equation}
The no-slip condition (\ref{noslip}) stays as is, and (\ref{kine:curv}) becomes
\begin{equation}\label{kine:lo}
\frac{\partial h^{\ast}}{\partial t^{\ast}} = u_2^{\ast}-u_1^{\ast}
\frac{\partial h^{\ast}}{\partial s} 
\end{equation}
to leading order. 

These equations are now combined into a single one for $h$. Integrating the second equation in (\ref{lo}) one gets
\[
u_1^{\ast}=\frac{1}{2}\left(\frac{\partial p}{\partial s}
+\mathbf{e}_y\cdot\mathbf{t}\right)  ((d^{\ast})^2 - 2 h^{\ast}d^{\ast})\,,
\]
and, from the first equation in (\ref{lo}), one gets  
$u_{2}^{\ast}=-\int_{0}^{h^{\ast}}\frac{\partial u_{1}^{\ast}}{\partial s}\,dy$, 
and so
\begin{equation}\label{thin-film}
\frac{\partial h^{\ast}}{\partial t^{\ast}}=-\frac{\partial}{\partial s}
\int_{0}^{h^{\ast}}u_{1}^{\ast}\,dy=\frac{1}{3}\frac{\partial}{\partial s}
\left(  \left(  \frac{\partial p}{\partial s}+\mathbf{e}_{y}\cdot
\mathbf{t}\right)  \left(  h^{\ast}\right)  ^{3}\right)  \,. 
\end{equation}

Equations similar to (\ref{tan:norm:lo}), (\ref{curv:expan}) and (\ref{thin-film}), 
where the main driving terms are the gravity and the curvature of the 
substrate, have been obtained, in a slightly different context, in \cite{MyersSolidI} and 
\cite{MyersSolidII}. This model can be obtained also as a 
particular case of the ones considered in \cite{Royetal} for specific choices 
of the parameters. 

We shall write the leading order of (\ref{thin-film}) as
\begin{equation}\label{givenQ}
\frac{\partial h^{\ast}}{\partial t^{\ast}}+\frac{1}{3}\frac{\partial}{\partial s}
\left(  Q(s)  \left(  h^{\ast}\right)  ^{3}\right)  =0\,,\quad  \mbox{with} \quad
Q(s):=\displaystyle{\left(  \frac{1}{B}\frac{dk}
{ds}-\mathbf{e}_{y}\cdot\mathbf{t}\right)  }\,.
\end{equation}
This is a non-linear non-homogeneous (in $s$) hyperbolic equation. The term $Q\left(  s\right)$
measures the tendency of the fluid to move in the direction of the tangent 
vector $\mathbf{t}$. It is important to remark that the function $Q\left( s\right)$ 
is completely determined by the geometry of the curve and the 
gravitational field. Moreover, all the geometrical features of the curve are 
reduced in this approximation to the function $Q(s)$.

We are interested in the study of this system in the steady state regime. Which, to the current order of approximation, 
is simply
\begin{equation}\label{S1E3}
\frac{1}{3} Q(s)\left(  h^{\ast}\right)^3=1\,.
\end{equation}
We observe that (\ref{S1E3}) can have solutions with $Q(s)<0$, $h^{\ast}( s)<0$. 
The meaning of negative values of $h^{\ast}$ is just that the fluid is placed in the 
direction of the vector $-\mathbf{n}(s)$, i.e. if a point moves 
along the curve $\partial\Omega_{f}$ in the direction of increasing $s$,  
positive values of $h^{\ast}$, indicate that the fluid would be seen by the 
moving particle to the left , and negative values of $h^{\ast}$ mean that 
the particle would see the fluid to its right. We can always assume that 
$Q(s)\geq 0$ by reversing the direction of the parametrisation. 

We consider the case in which $Q(s)$ vanishes for some values of $s$. 
At these points the approximation (\ref{S1E3}) would break down, because the height of the fluid is expected to 
increase unboundedly; the next order terms (in particular, in (\ref{curv:expan})) will become important near these points. 
We will not consider in this paper curves for which $Q(s)$ takes negative values over some intervals.

\subsection{Geometrical problem of the substrate}\label{section:2.3}
We are interested in studying flows for which the approximation (\ref{S1E3}) 
breaks down; that is, if $Q(s_{0})  =0$ for some value of 
$s_{0}$, where, for definiteness, $Q(s)>0$ for $s\neq s_{0}$ in a neighbourhood of $s_0$. 
It is then relevant to show that there exist surfaces where these conditions are 
satisfied. We prove in this Section that this is indeed the case.

We begin formulating the problem satisfied by curves if 
$Q(s)$ is a given function. We recall that, since $\alpha(s)=(x(s),y(s))$ is the arc-length 
parametrisation of $\partial\Omega_{s}$, $(x'(s))^{2}+ (y'(s))^{2}=1$. 
Let us denote by $\theta(s)$ the angle between the tangent vector $\mathbf{t}$ 
and the horizontal axis. Then:
\begin{equation}\label{tanvec}
\frac{dx}{ds}=\cos(\theta(s))\,,\quad\frac{dy}{ds}=\sin(\theta(s))\,.
\end{equation}
Observe that (restricting to values of $\theta\in[-3\pi/2,\pi/2]$) when $\theta\in(-3\pi/2,-\pi/2)$ the fluid is below 
the substrate, and, when $\theta\in(-\pi/2,\pi/2)$, then fluid is on top of the substrate. For $\theta=-\pi/2$ the surface 
is vertical and the fluid is to the right of it. Similarly, for $\theta=\pi/2$ (or $-3\pi/2$), the 
substrate is vertical and the fluid is to the left of it. In terms of $\theta$, 
the curvature is given by $k=\frac{d\theta}{ds}$ and $\mathbf{e}_{y}\cdot\mathbf{t}=
\sin\left(\theta(s)\right)$. We can now write (\ref{givenQ}) as 
\begin{equation}\label{forced:pendulum}  
\frac{1}{B}\frac{d^{2}\theta}{ds^{2}}-\sin\theta(s)=Q(s)\,.
\end{equation}
This is the differential equation that describes the geometry of the substrate 
in terms of the function $Q(s)$. Solving (\ref{forced:pendulum}) 
the curve is recovered by integrating (\ref{tanvec}). Observe that 
(\ref{forced:pendulum}) is the equation of a forced nonlinear pendulum. 

There is a class of semi-explicit solutions of (\ref{forced:pendulum}) that 
can be obtained assuming that $Q\left(  s\right)  =Q_{0}\in\mathbb{R}$. The 
simplest case corresponds to $\left\vert Q_{0}\right\vert \leq1$, and includes 
the pathological case $Q_0\equiv 0$, for which the thin-film approximation breaks 
down everywhere on the substrate (see (\ref{S1E3})). For such $Q_0$'s there 
are constant solutions of (\ref{forced:pendulum}) given by
\begin{equation}\label{B1}
\theta(s)  =\theta_0\,,\quad \sin(\theta_0)=-Q_0\,. 
\end{equation}
These solutions correspond to an inclined plane with constant slope 
$\theta_{0}$. We can always assume without loss of generality that 
$Q_{0}\geq0$. We will assume also that $\theta_{0}\in\left[  -\pi,0\right]$. Due 
to our choice of signs, the fluid is above the plane 
$\left\{  \theta\left( s\right)  =\theta_{0}\right\}$ if $\theta_{0}\in\left(  -\frac{\pi}{2},0\right]$ 
and the fluid is below the same plane if 
$\theta_{0}\in\left[-\pi,-\frac{\pi}{2}\right)$. In the critical case $\theta_{0}=-\frac{\pi}{2}$ 
the plane is vertical and the fluid is to the right of it. Also, 
for any $0\leq Q_{0}<1$ there are two roots of the second equation of 
(\ref{B1}) with $\theta_{0}\in\left[  -\pi,0\right]$. Let us them be denoted 
by $\theta_{1}$ and $\theta_{1}^{\ast}$, where
\[
-\pi\leq\theta_{1}^{\ast}<-\frac{\pi}{2}<\theta_{1}\leq0\,,  
\]
and they satisfy $\theta_{1}^{\ast}+\theta_{1}=-\pi$.

We can also consider perturbations of these constant solutions. The point 
$\left(  \theta,\frac{d\theta}{ds}\right)  =\left(  \theta_{1}^{\ast},0\right)$ 
is a centre in the phase portrait of the equation
(\ref{forced:pendulum}). Therefore, there is a one-parameter family of 
periodic solutions of (\ref{forced:pendulum}) for which $\theta\left(s\right)$ 
oscillates around $\theta_{1}^{\ast}$ in the usual way for the 
nonlinear pendulum. We can then obtain curves $\partial\Omega_{s}$ by means of 
(\ref{tanvec}). The resulting curve does not have self-intersections if the 
amplitude of the oscillations of $\left(  \theta\left(  s\right)  -\theta_{1}^{\ast}\right)$ 
is small, as it can be seen using a continuity argument. 
It is interesting to notice that for all these surfaces the height of the 
liquid remains constant (see (\ref{S1E3})), even if they are not planar. 
The fluid is below them, but we would assume that the fluid is stable enough to 
allow such flows.

We remark that the quantity $H\left(\frac{d\theta}{ds},\theta\right)=
\frac{1}{2B}\left(\frac{d\theta}{ds}\right)^{2}+\cos(\theta)+\sin(\theta_0)  \theta$ 
is conserved under the flow (\ref{forced:pendulum}).
Using this, it can be seen that there is a homoclinic orbit 
connecting the point $\left(  \theta,\frac{d\theta}{ds}\right)  =\left(\theta_{1},0\right)$ 
to itself. In view of  (\ref{S1E3}), this provides another example of a surface 
on which the height of the fluid remains constant. 
A perturbative argument shows that the corresponding surface obtained by means of 
(\ref{tanvec}) approaches asymptotically, as $s\to\pm\infty$, the 
plane $\{\theta(s) \equiv\theta_1 \}$ and it does not 
have self-intersections if $\theta_0$ is close to $-\frac{\pi}{2}$. However, 
since this is not the main goal of this paper, we will not continue this 
discussion here.

A set of interesting solutions of (\ref{forced:pendulum}) are the ones 
associated to $\left\vert Q_{0}\right\vert >1$. For these solutions
$\left\vert \theta(s)\right\vert$ increases very fast as 
$s\to\infty$. Their asymptotics are $\theta(s)\sim\frac{B Q_0 s^2}{2}$ as $s\to\pm\infty$. 
The corresponding surface obtained by means of (\ref{tanvec}) approaches asymptotically a finite 
point in each of the limits $s\to\infty$ and $s\to -\infty$. Therefore, these surfaces cannot be extended to unbounded domains 
and the approximations yielding to (\ref{S1E3}) must breakdown at some 
point. However, these curves give interesting examples of substrates where the driving 
coefficient $Q(s)$ is constant and larger than the maximum value allowed by the  
gravitational force, due to curvature effects.

After these preliminary observations about (\ref{forced:pendulum}), in the following theorem 
we construct functions $Q(s)$ that vanish quadratically at $s_0=0$.
\begin{theorem}\label{ExtCurves}
Let $0<Q_0 <1$. There exist $\omega_0>0$ small, such that 
for any $\omega<\omega_0$, there exist curves $\partial\Omega_s\in C^{\infty}(\mathbb{R})$ 
without self-intersections that approach asymptotically, as $s\to\pm\infty$, the line 
$\{  \theta(s)\equiv \theta_1 \}$, where  
$\theta_1\in\left(-\frac{\pi}{2},0\right)$ and $\sin(\theta_1)=-Q_0$, 
and such that the function $Q(s)$ defined by means of (\ref{forced:pendulum}) 
satisfies $Q(s)>0$ for $s\neq 0$, $Q(0)=\omega$ and $0<\lim_{s\to 0}\frac{Q(s)-\omega}{s^2}<\infty$.
\end{theorem}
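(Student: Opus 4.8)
The plan is to dispense with any phase–plane analysis of the forced pendulum (\ref{forced:pendulum}) and simply write the curves down explicitly. Recall that, as in the discussion preceding the theorem, a curve $\partial\Omega_s$ is determined by its tangent angle $\theta(s)$ through (\ref{tanvec}), i.e. $\alpha(s)=\alpha(0)+\int_0^s(\cos\theta,\sin\theta)$, that its curvature is $k=\theta'$, and that the associated coefficient is $Q(s)=\frac1B\theta''(s)-\sin\theta(s)$ (cf. (\ref{givenQ})). Thus it is enough to produce $\theta\in C^\infty(\mathbb{R})$ with $\theta(s)\to\theta_1$ exponentially as $s\to\pm\infty$ (so that $\partial\Omega_s$ is asymptotic to a line of slope $\theta_1$ at each end and $Q(s)\to Q_0$), with $\theta$ confined to an interval of length $<\pi$ (so that $\alpha$ is injective), and such that $Q=\frac1B\theta''-\sin\theta$ is positive, equals $\omega$ at $s=0$, and is of the form $\omega+cs^2+o(s^2)$ near $s=0$ with $c\in(0,\infty)$. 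I would look for $\theta$ in the one–parameter family
\[
\theta(s)=\theta_1+a\,e^{-bs^2},\qquad b:=\frac{B}{\pi}\ \text{fixed},\ \ a\in\bigl(0,\tfrac{\pi}{2}\bigr)\ \text{to be chosen}.
\]
For such $\theta$: it is smooth, even, tends to $\theta_1$ exponentially, and $\theta(s)-\theta_1=a\,e^{-bs^2}\in(0,a]$; since $\theta_1\in(-\tfrac\pi2,0)$ and $a<\tfrac\pi2$ the range of $\theta$ lies in $(-\tfrac\pi2,\tfrac\pi2)$, an interval of length $<\pi$. Injectivity of $\alpha$ is then immediate: projecting onto $\mathbf{m}=(-\sin\theta_1,\cos\theta_1)$ gives $\frac{d}{ds}\bigl(\alpha(s)\cdot\mathbf{m}\bigr)=\sin\bigl(\theta(s)-\theta_1\bigr)>0$ for all $s$, so $s\mapsto\alpha(s)\cdot\mathbf{m}$ is strictly increasing. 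Smoothness of $\partial\Omega_s$ and the asymptotics are clear, and since $\theta$ is even one has $\alpha(-s)=2\alpha(0)-\alpha(s)$, so $\partial\Omega_s$ is asymptotic at each end to a line of slope $\theta_1$, i.e. to $\{\theta\equiv\theta_1\}$. (If the \emph{same} asymptotic line is required at both ends one enlarges the ansatz, e.g. to $\theta(s)=\theta_1+(a+cs^2)e^{-bs^2}$, and imposes one extra condition; this is a routine variant.)

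Next I would fix $a$ by the requirement $Q(0)=\omega$. With $b=B/\pi$ one has $\theta''(0)=-2ab$ and $\sin\theta(0)=\sin(\theta_1+a)$, so $Q(0)=R(a)$ where $R(a):=-\frac{2a}{\pi}-\sin(\theta_1+a)$. Here $R(0)=-\sin\theta_1=Q_0$, $R(\tfrac\pi2)=-1-\cos\theta_1<0$, and $R'(a)=-\frac2\pi-\cos(\theta_1+a)<0$ on $[0,\tfrac\pi2]$ (on that range $\theta_1+a\in(-\tfrac\pi2,\tfrac\pi2)$, hence $\cos(\theta_1+a)>0$); so $R$ is a continuous strictly decreasing bijection of $[0,\tfrac\pi2]$ onto $[-1-\cos\theta_1,Q_0]$. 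Since $0<\omega<\omega_0\le Q_0$, the equation $R(a)=\omega$ has a unique solution $a=a(\omega)\in(0,\tfrac\pi2)$, depending smoothly on $\omega$, with $a(\omega)\to a(0)>0$ as $\omega\to0$. Fix this $a$; then $Q(0)=\omega$ by construction.

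It remains to check positivity and the quadratic behaviour. Write $Q(s)=\frac1B\theta''(s)+\bigl(-\sin\theta(s)\bigr)$. From $\theta''(s)=2ab\,e^{-bs^2}(2bs^2-1)$ and the elementary inequality $e^{-x}(2x-1)\ge-1$ for $x\ge0$, with equality only at $x=0$, one gets $\frac1B\theta''(s)\ge-\frac{2ab}{B}$, with equality only at $s=0$. Since $\theta_1+a\,e^{-bs^2}\in[\theta_1,\theta_1+a]\subset(-\tfrac\pi2,\tfrac\pi2)$, where $\sin$ is increasing, $\sin\theta(s)\le\sin(\theta_1+a)$, so $-\sin\theta(s)\ge-\sin(\theta_1+a)=\omega+\frac{2ab}{B}$ by the relation $R(a)=\omega$, once more with equality only at $s=0$. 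Adding the two bounds, $Q(s)\ge\omega$ for every $s$, with equality iff $s=0$; thus $Q(s)>0$ for $s\ne0$ and $Q(0)=\omega$. Finally $Q$ is even, so $\lim_{s\to0}\frac{Q(s)-\omega}{s^2}=\frac12 Q''(0)$; since $\theta'(0)=0$, $\theta''(0)=-2ab$ and $\theta''''(0)=12ab^2$, one computes $Q''(0)=\frac1B\theta''''(0)-\cos(\theta_1+a)\,\theta''(0)=\frac{12ab^2}{B}+2ab\cos(\theta_1+a)$, which is finite and strictly positive (as $a,b>0$ and $\cos(\theta_1+a)>0$). This is the last assertion, and the theorem follows with $\partial\Omega_s=\{\alpha(s):s\in\mathbb{R}\}$.

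With this approach there is essentially no hard step; the only point needing a little care is the sign–chasing of the previous paragraph, which reduces to the scalar inequality $e^{-x}(2x-1)\ge-1$ and the monotonicity of $\sin$ on $(-\tfrac\pi2,\tfrac\pi2)$, everything else being a one–variable implicit–function/intermediate–value argument. I also note that the construction uses only $\omega<Q_0$, so one may take $\omega_0$ to be any number $\le Q_0$; the smallness of $\omega$ is not actually needed for this statement, although it is the natural regime, since as $\omega\to0$ the resulting $Q$ tends to a profile vanishing quadratically at $s=0$, which is the matching datum relevant for (\ref{capalimite}).
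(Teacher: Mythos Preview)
Your proof is correct and takes a genuinely different route from the paper. The paper prescribes $Q$ on $(-\infty,\omega]$ as a small bump perturbation of the constant $Q_0$, solves the forced pendulum (\ref{forced:pendulum}) for $\theta_\omega$ there (approximating the bump by a Dirac mass and invoking a Gronwall argument), and then extends $\theta_\omega$ by hand for $s\ge\omega$ with small $C^2$ norm, reading off $Q$ on that part from the extension; smallness of $\omega$ is what keeps $|\theta_\omega-\theta_1|$ uniformly small and hence yields embeddedness and $Q>0$. You instead write down $\theta$ explicitly as a Gaussian bump over $\theta_1$, so no ODE has to be solved at all: the verification reduces to the scalar inequality $e^{-x}(2x-1)\ge-1$ and the monotonicity of $\sin$ on $(-\tfrac\pi2,\tfrac\pi2)$, together with a one--variable intermediate--value argument to hit $Q(0)=\omega$. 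This is more elementary and completely explicit, and as you note it actually works for every $\omega\in(0,Q_0)$, not only small $\omega$. What the paper's approach buys, by contrast, is flexibility in the shape of $Q$ (any small bump, not necessarily Gaussian in $\theta$) and a curve that is exactly the line $\{\theta\equiv\theta_1\}$ on one side rather than merely exponentially asymptotic to it; neither feature is needed for the statement as given. Your parenthetical remark that the two asymptotic lines at $s\to\pm\infty$ are parallel but generally distinct is accurate, and this is also the case for the paper's construction, so the reading of ``the line $\{\theta(s)\equiv\theta_1\}$'' as ``a line of slope $\theta_1$'' is the intended one.
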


\begin{proof}
We construct the function $Q(\cdot)$ as follows. We take 
$Q(s)  =Q_0 \left(1-\xi\left( \frac{s}{\omega}\right)\right)$ 
for $s\leq\omega,$ where $\xi\in C^{\infty}(\mathbb{R})$, 
$\xi^{\prime}(z)>0$ for $z\in\left(0,\frac{1}{2}\right)$, 
$\xi^{\prime}(z)<0$ for $z\in\left(\frac{1}{2},1\right)$, 
$\xi(z)  =0$ for $z\in (-\infty,0]\cup[1,\infty)$, 
$\xi^{\prime\prime}\left(\frac{1}{2}\right)<0$ and $\int_0^1 \xi(z)dz=1$. 
The solution $\theta_\omega$ of the equation (\ref{forced:pendulum}) can then be approximated 
for $s\leq\omega$, if $\omega$ is small enough by means of the solution of the distributional 
equation:
\[
\frac{1}{B}\frac{d^2\theta}{d s^2}-\sin\theta(s)=Q_0 -  Q_0\omega \delta(s)  \,.
\]
An alternative way of proving this can be obtained by rescaling $s$ with $\omega$ and studying 
the resulting regular perturbation problem using Gronwall. We then obtain:
\[
\lim_{\omega\to 0}\frac{\left| \theta_{\omega}(s)-\theta_1 \right|}{\omega}=0\,,
\quad \lim_{\omega\to 0}\frac{\left|\theta_{\omega}^{\prime}(s)  - B Q_0 \omega\right| }{\omega}=0 
\]

We now construct $\theta_{\omega}(s)$, but not $Q(s)$, for $s\geq\omega$ as any function that extends the obtained 
$\theta_{\omega}(s)$ in the region $s<\omega$ as a function in $C^{\infty}(\mathbb{R})$ 
with $| \theta_{\omega}(s)-\theta_1 | \leq\frac{\omega}{4}$, 
$|\theta_{\omega}^{\prime}(s)| \leq 2 B Q_0\omega$, $|\theta_{\omega}^{\prime\prime}(s)| \leq 2B\omega$ for 
$s\geq\omega$. We impose also that $\lim_{s\to\infty}\theta_{\omega}(s)=\theta_1$. 
This extension is possible because $|\theta_{\omega}^{\prime\prime}(s)| \ll\omega$ 
as $\omega\to 0$. We then define $Q(s)$ by means of (\ref{forced:pendulum}) for $s\geq\omega$ and 
construct the curve $\partial\Omega_s$ by means of (\ref{tanvec}). Since $|\theta_{\omega}(s)-\theta_1|
 \leq\frac{\omega}{4}$ for $s\in\mathbb{R}$ it follows that the resulting curves do not 
self-intersect for $\omega$ small enough. The rest of the properties stated in 
the Theorem are straightforward. 
\end{proof}

We recall that our sign criteria implies that the fluid lies above the curve 
$\partial\Omega_s$ obtained in Theorem~\ref{ExtCurves}.

\subsection{Analysis of water accumulation regions in the limit $\varepsilon\rightarrow 0$}\label{section:2.4} 
We now study the form of the stationary solutions of the thin-film approximation 
in the regions where the leading order approximation (\ref{S1E3}) breaks down. We 
will consider only the case in which $Q^{\prime\prime}(s_0)>0$ 
at the points $s=s_{0}$ where $Q(s)$ vanishes. The existence of such non self-intersecting curves 
$\partial\Omega_{s}$ with $Q\in C^{\infty}\left(  \mathbb{R}\right)$ 
satisfying (\ref{S3E2}) below has been proved in Theorem~\ref{ExtCurves}. 
Assuming (\ref{h0}) and performing the scalings (\ref{thin:scale}) into 
the Stokes problem (\ref{curv1})-(\ref{kine:curv}), one can derive the stationary thin-film approximation
 to order $\varepsilon$, namely, dropping the $*$'s for simplicity of notation,
\begin{equation}\label{S5E1}
\frac{1}{3}\frac{\partial}{\partial s}\left(  \left(  Q(s) +
\varepsilon \frac{\partial^3 h}{\partial s^3 }+\varepsilon k^2 \,\frac{\partial h}{\partial s} 
-\varepsilon(\mathbf{e}_y\cdot\mathbf{n}) \frac{\partial h}{\partial s}\right)  h^3 
+ \varepsilon\tilde{Q}(s)h^4\right)  = 0
\end{equation}
with
\[
\tilde{Q}(s)=\frac{d}{ds} k^2-\frac{7}{8} k \,Q(s)+\frac{5}{8}\frac{1}{B}\frac{dk}{ds}
-\frac{3}{8}\frac{\partial}{\partial s}(\mathbf{e}_y\cdot\mathbf{n})\,.
\]
and $Q(s)$ as in (\ref{givenQ}). Assuming now that the flux is the same as the one obtained in the leading order approximation 
(\ref{S1E3}), we obtain the stationary equation: 
\begin{equation}\label{S2E2}
\frac{1}{3}\left(  Q(s) +\varepsilon\frac{\partial^{3} h}{\partial s^{3}} 
+\varepsilon k^{2}\,\frac{\partial h}{\partial s} 
-\varepsilon(\mathbf{e}_{y}\cdot\mathbf{n}) \frac{\partial h}{\partial s} + 
\varepsilon\tilde{Q}(s)h\right)  h^{3} =1\,. 
\end{equation}

We take a somewhat general form for $Q(s)$, namely, we assume that 
near a stagnation point placed at $s=0$ we have:
\begin{equation}\label{S3E2}
Q(s)  =As^{2}+b\varepsilon^{\frac{6}{17}}+
o(s^2 +\varepsilon^{\frac{6}{17}})\,, \quad A>0\,, \quad b\in\mathbb{R}\,.
\end{equation}
The exponent $\frac{6}{17}$ results from the fact that with this particular 
rescaling the effects of this term will turn out to be of order one in the 
boundary layer. We will also assume that $Q\left(  s\right)  >0$ for 
$|s| \gg\varepsilon^{\frac{3}{17}}$.

The natural rescaling of variables required to study the singular
perturbation problem (\ref{S2E2}) is:
\begin{equation}\label{S2E5a}
s=\varepsilon^{\frac{3}{17}}\left(  \frac{3}{2A^{4}}\right)^{\frac{1}{17}}\xi\,,\quad
h=\varepsilon^{-\frac{2}{17}}\left(  \frac{3}{2A^{\frac{3}{5}}}\right)^{\frac{5}{17}}H\,. 
\end{equation}
Finally, (\ref{S2E2}) then becomes (\ref{capalimite}), to leading order, where 
\[
a=\left(  \frac{2}{3A^{\frac{9}{2}}}\right)^{\frac{2}{17}}b\,.
\]

In order to determine the boundary conditions that must be imposed to the 
solutions of (\ref{capalimite}) we must study the asymptotics of the solutions of 
(\ref{S1E3}) as $s\to 0$. Using (\ref{S3E2}), we obtain from (\ref{S1E3})
\begin{equation}\label{S2E6}
h(s)  \sim\left(  \frac{3}{A s^2}\right)^{\frac{1}{3}}
\quad \mbox{as}\quad \varepsilon^{\frac{3}{17}}\ll \left\vert s\right\vert \ll 1\,,
\end{equation}
and using (\ref{S2E5a}), we find
\begin{equation}\label{S2E7}
H\sim\frac{1}{\left\vert \xi\right\vert ^{\frac{2}{3}}} \quad \mbox{as} \quad
1  \ll \left\vert \xi\right\vert \ll \varepsilon^{-\frac{3}{17}}\,,
\end{equation}
which provides the matching conditions for the solutions of
 (\ref{capalimite}).

According to the picture emerging from the results described here it would 
follow that the stationary solutions of (\ref{S5E1}) could be approximated by 
means of the solutions of (\ref{S1E3}) (i.e. by (\ref{S2E6})) except near the stagnation points or where 
$Q(s)$ is very small. At such points, a boundary layer arises and it 
is described by means of the rescaling (\ref{S2E5a}) and the 
function $H$ that solves (\ref{capalimite}) with matching conditions (\ref{S2E7}).


\section{The solutions of (\ref{WilsonJones})}\label{section:3}
In this section we analyse (\ref{WilsonJones}) rigorously. This analysis
will be used in the proof the existence of solutions of (\ref{capalimite}) subject to 
(\ref{S2E7}) that is done in \cite{CV2}.

\subsection{Global existence, stable and unstable manifolds}\label{section:3.1}
In this Section we describe in detail the solutions of (\ref{WilsonJones}). 
We write (\ref{WilsonJones}) in the equivalent system form:
\begin{equation}\label{S3E1}
\frac{d\Phi}{d\tau}=W \,, \ \frac{dW}{d\tau}=\Psi \,,\ \frac{d\Psi}{d\tau}
=\frac{1}{\Phi^{3}}-1\,. 
\end{equation}
We observe that there is a unique critical point for (\ref{WilsonJones}), namely 
$P_s=(  \Phi,W,\Psi)=(  1,0,0)$, and is hyperbolic. 
The stable manifold of (\ref{WilsonJones}) at the point $P_s$ is tangent to the vector:
$v_1=(3^{-\frac{2}{3}},-3^{-\frac{1}{3}},1)^T$, 
and the corresponding eigenvalue is $\lambda_1:=-3^{\frac{1}{3}}$. Its 
unstable manifold is two-dimensional and tangent at the point $P_s$ to the
plane spanned by the vectors $v_2:=\left(-\frac{1}{6}3^{\frac{1}{6}},\frac{1}{6}3^{\frac{2}{3}},1\right)^T$
and $v_3:=\left(\frac{1}{6}3^{\frac{5}{6}},\frac{1}{2}3^{\frac{1}{6}},0\right)^T$. 
The corresponding eigenvalues of the linearised problem being complex 
conjugates, namely, $\lambda_{2}:=\ 3^{\frac{1}{3}}\left(  1+i\,3^{\frac{1}{2}}\right)/2$ and 
$\lambda_{3}:=3^{\frac{1}{3}}\left(  1-i\,3^{\frac{1}{2}}\right)  /2$.

After these preliminary observations, we next prove that the solutions of (\ref{WilsonJones}) 
do not develop singularities for increasing $\tau$.

\begin{lemma}[Forward Global Existence]\label{global:existence1} 
Suppose that $\Phi(\tau_0)>0$, $W(\tau_0)$, $\Psi(\tau_0)$ are arbitrary. Then, the solution of 
(\ref{S3E1}) with these initial data is defined and $\Phi(\tau)>0$ for any $\tau>\tau_0$.
\end{lemma}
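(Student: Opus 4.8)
The plan is to show that along any forward solution the height $\Phi$ cannot reach $0$ in finite time and cannot blow up to $+\infty$ in finite time, and that no singularity of any other kind can form. The key structural fact is that as long as $\Phi>0$ the right-hand side of (\ref{S3E1}) is smooth, so the only way the solution can fail to continue is if $\Phi\to 0^+$, or if one of $\Phi,W,\Psi$ becomes unbounded, in finite time. I would therefore argue that neither happens on any bounded $\tau$-interval $[\tau_0,\tau_0+T)$.

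First I would rule out finite-time blow-up to $+\infty$. On a bounded interval, suppose for contradiction that $\limsup_{\tau\to T^-}\Phi(\tau)=\infty$ (with $T<\infty$). While $\Phi\ge 1$ we have $\Psi' = \Phi^{-3}-1 \le 0$, so $\Psi$ is non-increasing there; combined with the fact that on the complementary set $\{\Phi<1\}$ one has $0\le \Phi^{-3}-1$ but $\Psi'$ is still bounded on bounded time intervals once $\Phi$ is bounded below — here one has to be a bit careful, so I would instead bound things from the integral form. On $[\tau_0,\tau)$, $\Psi(\tau)=\Psi(\tau_0)+\int_{\tau_0}^\tau(\Phi^{-3}-1)$, and since $\Phi^{-3}-1\le \Phi^{-3}$, if $\Phi$ stays above some $\delta>0$ then $\Psi$ is bounded on $[\tau_0,T)$; then $W$ is bounded (integrate $\Psi$), then $\Phi$ is bounded (integrate $W$) — contradicting the blow-up. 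Hence blow-up forces $\liminf_{\tau\to T^-}\Phi=0$, and the whole problem reduces to excluding $\Phi\to 0^+$ in finite time.

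The main obstacle, then, is the lower bound $\Phi>0$, because near $\Phi=0$ the term $\Phi^{-3}$ in $\Psi'$ is large and positive, which is exactly the mechanism that could (a priori) drive $\Phi$ down and then singularly. The idea is that this large positive $\Psi'$ is actually stabilising: it forces $\Psi=\Phi''$ to grow, hence eventually $W=\Phi'$ to increase, hence $\Phi$ to turn around before reaching $0$. To make this rigorous I would look for an energy-type / Lyapunov quantity for (\ref{WilsonJones}). Multiplying $\Phi'''=\Phi^{-3}-1$ by a suitable integrating factor: note $\frac{d}{d\tau}\big(\Phi''\Phi' \big) = \Phi'''\Phi' + (\Phi'')^2$, and $\Phi'''\Phi' = (\Phi^{-3}-1)\Phi' = -\frac{d}{d\tau}\big(\tfrac12\Phi^{-2}+\Phi\big)$. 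Hence
\begin{equation}\label{energyid}
\frac{d}{d\tau}\left( \Phi''\Phi' + \frac{1}{2\Phi^{2}} + \Phi \right) = (\Phi'')^{2} \ge 0 \,.
\end{equation}
So $E(\tau):=\Phi''\Phi' + \tfrac12\Phi^{-2}+\Phi$ is non-decreasing, hence bounded below on $[\tau_0,\tau)$ by $E(\tau_0)$. This already controls $\tfrac12\Phi^{-2}+\Phi$ in terms of $-\Phi''\Phi' + E(\tau_0)$, but $\Phi''\Phi'$ is not yet under control, so one more step is needed.

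To close the argument I would argue near a putative first zero. Suppose $\tau^*\le T$ is the first time $\Phi\to 0^+$. On an interval $(\tau^*-\eta,\tau^*)$, $\Phi$ is small and decreasing to $0$, so $\Phi'\le 0$ there (at least on a subinterval), while $\Phi''' = \Phi^{-3}-1 >0$, so $\Phi''$ is strictly increasing; if $\Phi''$ ever becomes and stays positive then $\Phi'$ is increasing, contradicting $\Phi'\le 0$ with $\Phi\to 0$ — so $\Phi''<0$ on $(\tau^*-\eta,\tau^*)$, i.e. $\Phi''\Phi'\ge 0$ there. Then from \eqref{energyid}, $\tfrac12\Phi^{-2}+\Phi \le \Phi''\Phi' + \tfrac12\Phi^{-2}+\Phi = E(\tau) $, wait — that gives an upper bound the wrong way; instead, since $\Phi''\Phi'\ge0$, we get $E(\tau)\ge \tfrac12\Phi(\tau)^{-2}$, and since $E$ is non-decreasing it is bounded above on the bounded interval $[\tau_0,\tau^*]$ only if... — here I need $E$ bounded above, which requires controlling $\int (\Phi'')^2$. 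This is the delicate point and the real obstacle: one must show $\int_{\tau_0}^{\tau^*}(\Phi'')^2\,d\tau<\infty$. I would get this from an interpolation/bootstrap: on $\{\Phi''<0,\ \Phi'<0\}$ near $\tau^*$, the already-established fact that $\Phi'''>0$ means $\Phi''$ is monotone, hence of bounded variation, hence bounded, on the finite interval — so $\int(\Phi'')^2$ is finite there. Therefore $E(\tau^*)<\infty$, contradicting $E(\tau)\ge\tfrac12\Phi(\tau)^{-2}\to\infty$. This contradiction shows $\Phi$ stays bounded away from $0$ on every bounded interval, which combined with the first paragraph gives global forward existence with $\Phi(\tau)>0$ for all $\tau>\tau_0$. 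The only step I expect to require genuine care is this last bound on $\int(\Phi'')^2$, i.e. controlling the energy $E$ from above; everything else is integrate-and-bootstrap.
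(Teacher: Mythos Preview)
Your energy identity $E'=(\Phi'')^2\ge 0$ is correct (and is in fact used later in the paper for a different purpose), and your overall strategy --- reduce everything to excluding $\Phi\to 0^+$ at a finite $\tau^*$, then use $E$ to obtain a contradiction --- is a genuinely different route from the paper's proof, which instead performs the self-similar rescaling $\bar\Phi=(\tau^*-\tau)^{-3/4}\Phi$, $s=-\log(\tau^*-\tau)$, and shows the resulting (asymptotically autonomous) system has no rest points, forcing a contradiction. Your approach is more elementary when it works; the paper's rescaling is more systematic and does not require any sign dichotomy near $\tau^*$.

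However, your proof has a real gap, and it is not where you flag it. The step ``if $\Phi''$ ever becomes and stays positive then $\Phi'$ is increasing, contradicting $\Phi'\le 0$ with $\Phi\to 0$'' is simply wrong: $\Phi'$ increasing and negative is perfectly compatible with $\Phi\to 0$ (think of any convex function touching zero). So the case $\Phi''>0$ near $\tau^*$ is \emph{not} excluded, and in that case $\Phi''\Phi'<0$, which kills the inequality $E\ge \tfrac{1}{2}\Phi^{-2}$ on which your contradiction rests. Moreover, in that case $\Phi''$ is increasing and tends to $+\infty$ (since $\Phi'''\sim\Phi^{-3}$), so your bound $\int(\Phi'')^2<\infty$ also fails. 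Thus both the sign claim and the finiteness of $E$ collapse simultaneously in this branch.

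The gap is fixable, but it needs an honest argument. One way: in the case $\Phi''>0$ near $\tau^*$, first show $\Phi'\to 0$ (if $\Phi'\to\ell<0$ then $\Phi\sim|\ell|(\tau^*-\tau)$, hence $\Phi''\sim c(\tau^*-\tau)^{-2}$, hence $\Phi'\to+\infty$, a contradiction). Then use $(\Phi\Phi')'=(\Phi')^2+\Phi\Phi''$ together with $(\Phi\Phi'')'=\Phi'\Phi''+\Phi^{-2}-\Phi\to+\infty$ to deduce that for every $M$ one has $\Phi(\tau)\ge \sqrt{M}\,(\tau^*-\tau)$ near $\tau^*$, while convexity with $\Phi'(\tau^*)=0$ gives $\Phi(\tau)\le|\Phi'(\tau)|(\tau^*-\tau)$; these two bounds force $|\Phi'(\tau)|\ge\sqrt{M}$ for every $M$, contradicting $\Phi'\to 0$. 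You should also note that, as in the paper, one must first upgrade $\liminf_{\tau\to\tau^*}\Phi=0$ to $\lim_{\tau\to\tau^*}\Phi=0$ before any of this local analysis applies; this follows easily from $\Psi'\ge -1$, but it is a separate step you skipped.
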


\begin{proof}
The only possibility of losing global existence is when $\Phi(\tau)$ approaches 
zero at a finite value of $\tau$, because when $\Phi(\tau)\geq\delta>0$ 
a simple estimate yields, integrating 
the equation, that $(\Phi,W,\Psi)  \leq (p(z),p^{\prime}(\tau),p^{\prime\prime}(\tau))$ where 
$p(\tau)=\Phi(\tau_0)+\frac{1}{6}(1/\delta^3-1)(\tau-\tau_0)^3+\Psi(\tau_0)(\tau-\tau_0)^2/2+W(\tau_0)(\tau_0)$, 
thus the solution cannot become unbounded at a finite value of $\tau$.

First, we show that if
\begin{equation}\label{S4E2}
\lim\inf_{\tau\to\tau^{\ast}} \Phi(\tau)  = 0
\end{equation}
then 
\begin{equation}\label{S4E3}
\lim_{\tau\to\tau^{\ast}} \Phi(\tau)  = 0\,.
\end{equation} 
We prove (\ref{S4E3}) by contradiction. First, we observe that as long 
as the solution is defined
\[
\frac{d\Psi}{d\tau}=\frac{1}{\Phi^{3}}-1\geq-1
\]
holds and then $\Psi(\tau)\geq\Psi(0)-\tau$, therefore
\begin{equation}\label{S4E1}
\Psi(\tau)\geq - C\tau^{\ast}\quad\mbox{for}\quad 0\leq\tau<\tau^{\ast}\mbox{and some} \quad C>0\,. 
\end{equation}
Let us now assume that (\ref{S4E2}) and that $\lim_{\tau\to\tau^{\ast}} \Phi(\tau)$ does not exist. 
Then, for some small enough $\varepsilon_0 > 0$, 
there exists an increasing sequence $\{\tau_n\}$ 
such that $\lim_{n \to\infty} \tau_n = \tau^{\ast}$, 
$\Phi(\tau_n)  = 2 \varepsilon_0$ and $W(\tau_n)\geq 0$ for all $n\in\mathbb{N}$.  
Integrating (\ref{S3E1}) gives
\[
\Phi(\tau)  = \Phi(\tau_n)  + W (\tau_n)(\tau - \tau_n)  
+ \int_{\tau_n}^{\tau}\!\int_{\tau_n}^{\eta} \Psi(s)  ds\,d\eta \,, \quad\mbox{for}\quad \tau\geq\tau_n\,,
\]
hence, using (\ref{S4E1}), we obtain that, as long as $\Phi(\tau)\geq\varepsilon_{0}$,
\begin{align*}
\Phi\left(  \tau\right)   &  \geq\Phi\left(  \tau_{n} \right)  
+ W \left(\tau_{n} \right)  \left(  \tau- \tau_{n} \right)  
- \int_{\tau_{n}}^{\tau} \!\int_{\tau_{n}}^{\eta} K \tau^{\ast} \,ds\,d\eta\\
&  \geq2 \varepsilon_{0} - \frac{K}{2} \tau^{\ast} \left(  \tau- \tau_{n}
\right)  ^{2}, \quad\mbox{for} \quad\tau^{\ast} > \tau\geq\tau_{n} \,.
\end{align*}
It then follows that $\Phi(\tau)  \geq\varepsilon_{0}$ for 
$\tau_{n} \leq\tau\leq\tau_{n} + \sqrt{\frac{2 \varepsilon_{0}}{K \left(\tau^{\ast} \right)  }}$. 
On the other hand, due to (\ref{S4E2}), there exist 
$\left\{\tilde{\tau}_{n} \right\}$ such that 
$\lim_{n \rightarrow\infty}\tilde{\tau}_{n} = \tau^{\ast}$ and 
$\Phi\left(  \tilde{\tau}_{n} \right)\leq\frac{\varepsilon_{0}}{2}$. 
We can assume that $\tilde{\tau}_{n} > \tau_{n}$. 
Since for $n$ large enough we have 
$\tilde{\tau}_{n}\in
\left(  \tau_{n},\tau_{n} + \sqrt{ \frac{2 \varepsilon_{0}}{K (\tau^{\ast})} } \right)$,  
we obtain a contradiction, and therefore (\ref{S4E3}) must hold if (\ref{S4E2}) does.

We now prove that (\ref{S4E3}) cannot happen. In order to make this precise we 
employ the change of variables $\bar{\Phi}=(\tau^{\ast}-\tau)^{-\frac{3}{4}}\Phi$, 
$\bar{W}=(\tau^{\ast}-\tau)^{\frac{1}{4}}W$, $\bar{\Psi}=(\tau^{\ast}-\tau)^{-\frac{5}{4}}\Psi$ 
with $s=-\ln(\tau^{\ast}-\tau)$, which gives the 
system
\begin{equation}\label{zero:forward}
\frac{d\bar{\Phi}}{ds}   =\bar{W}+\frac{3}{4}\bar{\Phi}\,,\quad 
\frac{d\bar{W}}{ds}    =\bar{\Psi}-\frac{1}{4}\bar{W}\,, \quad
\frac{d\bar{\Psi}}{ds}    =-\frac{5}{4}\bar{\Psi}+\frac{1}{\bar{\Phi}^{3}}
-e^{-\frac{9}{4}s}\,.
\end{equation}
We observe that with this change of variables, when $\tau\rightarrow(\tau^{\ast})^{-}$ 
then $s\to +\infty$ and the 
inhomogeneous term in (\ref{zero:forward}) becomes very small. In the absence 
of this term, the remaining autonomous system has no critical points, 
suggesting that all trajectories are unbounded. 

Let us assume that (\ref{S4E3}) holds, then, in terms of the new variables, 
this is equivalent to 
\begin{equation}\label{trans:cond}
\lim_{s\to\infty} e^{-\frac{3}{4}s}\bar{\Phi} =0\,.
\end{equation} 
First we assume that $\lim_{s\to\infty} \bar{\Phi}\neq0$. Then, multiplying 
the third equation in (\ref{zero:forward}) by $e^{\frac{9}{4}s}$ we obtain: 
\[
e^{s} \frac{d}{ds}\left(  e^{\frac{5}{4}}\bar{\Psi}\right)  = 
\frac{1}{\left(e^{-\frac{3}{4}s}\bar{\Phi}\right)  ^{3} } -1
\]
thus for large enough $s$, (\ref{trans:cond}) implies that there exists a constant 
$C>0$ such that
\begin{equation}\label{S3Ealgo}
e^{s} \frac{d}{ds}\left(  e^{\frac{5}{4}s}\bar{\Psi}\right)  > C\,.
\end{equation}
Setting $s>s_{0}$ with $s_{0}$ large, integration of (\ref{S3Ealgo}) gives
\[ 
e^{\frac{5}{4}s}\bar{\Psi}(s) >e^{\frac{5}{4}s_0}\bar{\Psi}(s_0) 
+C\int_{s_0}^{s} e^{-y}\,dy=e^{\frac{5}{4}s_0}\bar{\Psi}(s_0)
-C(e^{-s}-e^{-s_0})\geq e^{\frac{5}{4}s_0}\bar{\Psi}(s_0) \,,
\] 
hence, taking for example $\Psi_{0}=e^{\frac{5}{4}s_{0}}\bar{\Psi}(s_{0})$ 
for any such $s_{0}$, we obtain that for $s>s_0$ there exists a 
constant (of undetermined sign) such that $\bar{\Psi}(s)\geq e^{-\frac{5}{4}s}\Psi_{0}$.  
Proceeding in a similar manner, the second equation in (\ref{zero:forward}) yields 
$\bar{W}>e^{-\frac{1}{4}s}W_0$ for large $s$, and some 
constant $W_0$. Using this now for the first equation in (\ref{zero:forward}) 
gives, for large $s_0$ and $s>s_0$,
\begin{equation}\label{phi:est}
e^{-\frac{3}{4}s}\bar{\Phi}>e^{-\frac{3}{4}s_{0}}\bar{\Phi}(s_{0})+e^{-s_{0}}W_{0} - e^{-s}W_0\,.
\end{equation} 
Since $\bar{\Phi}(s)$ does not tend to $0$ as $s\to\infty$, one can 
always choose $s_{0}$ large enough such that $\bar{\Phi}(s_{0})+e^{-\frac{1}{4}s_{0}}W_{0}\geq\delta>0$, 
and then (\ref{phi:est}) implies that $\lim_{s\to\infty}e^{-\frac{3}{4}s}\bar{\Phi}>0$, 
a contradiction. 

If $\lim_{s\to\infty}\bar{\Phi}=0$, we argue in a similar way directly on 
the equations (\ref{zero:forward}). The third and the second equations give 
that there exist a large positive constant $C$ such that $\bar{\Psi}$, 
$\bar{W}>C$ for $s$ large enough. Then the first equation in 
(\ref{zero:forward}) and the fact that $\bar{\Phi}(s)>0$ or all $s$ imply 
that $\frac{d}{ds}\bar{\Phi}> C$, a contradiction. 
\end{proof}

The next lemma deals with the solutions contained on the stable manifold of the critical point.

\begin{lemma}[Behaviour of solutions on the stable manifold]\label{bh:stable:manifold} 
If $(\Phi,W,\Psi)\to P_s$ as $\tau\to\infty$ and $(\Phi,W,\Psi)\neq P_s$, then either
\begin{equation}\label{C1E1}
\Phi\rightarrow+\infty\quad\mbox{as}\quad\tau\rightarrow-\infty
\end{equation}
or there exists a finite $\tau^{\ast}$ such that
\begin{equation}\label{C1E2}
\Phi(\tau)\to 0\quad\mbox{as}\quad\tau\to(\tau^{\ast})^{+}
\quad\mbox{with}\quad\Phi(\tau)>0\quad\mbox{for all}\quad\tau > \tau^{\ast}\,.
\end{equation}
\end{lemma}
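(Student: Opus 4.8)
The plan is to analyse the solution on the one-dimensional stable manifold of $P_s$ in the backward direction of $\tau$, starting from a point close to $P_s$ along the eigenvector $v_1=(3^{-2/3},-3^{-1/3},1)^T$. As $\tau$ decreases from $+\infty$, the solution leaves the neighbourhood of $P_s$ (since $\lambda_1<0$, the stable direction is expanding backwards), and the task is to show that exactly one of the two stated alternatives occurs. Since Lemma~\ref{global:existence1} already gives forward global existence and positivity, the only way the trajectory can fail to exist for all $\tau$ is backward blow-up or $\Phi$ reaching $0$ at a finite backward time; I would organise the argument around these possibilities together with the monotonicity structure of the system.

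First I would establish that, as $\tau$ decreases away from $P_s$ along the stable manifold, one can choose the sign so that $\Phi$ starts increasing (the $v_1$ direction has $\Phi$-component of one sign; reversing it along the same line gives the other branch, but by the shape of $f(\Phi)=1/\Phi^3-1$ both half-branches are governed by the same dichotomy, so it suffices to track one and argue by the structure of the equation). The key monotonicity facts I would extract from (\ref{S3E1}): when $\Phi>1$ we have $\Psi'=1/\Phi^3-1<0$, and when $0<\Phi<1$ we have $\Psi'>0$. I would then split into cases according to whether $\Phi$ stays above $1$ for all $\tau$ less than some value, or dips below $1$. In the first case, $\Psi$ is monotone, which forces $W$ and hence $\Phi$ to have controlled behaviour; the goal is to show $\Phi\to+\infty$ as $\tau\to-\infty$, i.e. alternative (\ref{C1E1}). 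In the second case, once $\Phi<1$ one shows (using $\Psi'>0$, so $\Psi$ increasing as $\tau$ increases, equivalently decreasing backwards, pushing $W$ and then $\Phi$ down) that $\Phi$ is driven monotonically to $0$ in finite backward time, giving alternative (\ref{C1E2}) after relabelling $\tau^\ast$. The rigorous version of "driven to $0$ in finite time" is essentially the comparison $\Phi(\tau)\le 2\varepsilon_0-\tfrac{K}{2}(\tau-\tau_n)^2$-type estimate already used in the proof of Lemma~\ref{global:existence1}, run in the backward direction, combined with the fact that once $\Psi$ becomes sufficiently negative it stays negative and decreasing.

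The main structural point to nail down is that these two cases are genuinely exhaustive and mutually exclusive on the stable manifold: a trajectory that drops below $\Phi=1$ backward in $\tau$ cannot return above it in a way that saves it, and a trajectory staying above $\Phi=1$ cannot stay bounded. For the latter I would argue by contradiction: if $\Phi$ remained bounded (say $1\le\Phi\le M$) for all $\tau\le\tau_1$, then $\Psi'=1/\Phi^3-1$ is bounded and bounded away from $0$ unless $\Phi\to 1$, but $\Phi\to 1$ backward would mean the trajectory lies in the stable manifold of $P_s$ in backward time too — impossible for a hyperbolic point with a $1$-dimensional stable / $2$-dimensional unstable splitting unless the solution is the constant $P_s$, which is excluded; hence $\Psi$ is monotone with a definite slope, forcing $W$ and then $\Phi$ to blow up, contradiction. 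This gives $\Phi\to+\infty$. I would also need to rule out oscillation of $\Phi$ about $1$ backward in $\tau$ with $\Phi$ staying bounded; here the complex unstable eigenvalues suggest spiralling, but on the one-dimensional stable manifold the backward flow is a single orbit and I expect a Lyapunov-type quantity or a careful sign analysis of $(\Phi-1,W,\Psi)$ to show the orbit commits to one side.

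\textbf{Main obstacle.} The delicate step is the backward-blow-up alternative: controlling a trajectory that stays in $\Phi>1$ and showing it must actually reach $+\infty$ (rather than, say, approaching a bounded limit set or oscillating near $\Phi=1$ forever as $\tau\to-\infty$). The resolution should come from the rigidity of being on the stable manifold (a single orbit, so no recurrence) together with the sign of $\Psi'$; but making the "$\Psi$ has a definite slope $\Rightarrow$ $\Phi$ blows up" implication airtight, while simultaneously excluding a backward approach to $P_s$, is where the real work lies. I expect this to occupy the bulk of the proof, with the finite-time-extinction case being a comparatively routine comparison argument mirroring Lemma~\ref{global:existence1}.
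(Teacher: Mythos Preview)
Your case split (the two half-branches of the stable manifold, according to whether $\Phi>1$ or $\Phi<1$ near $P_s$) matches the paper's, but you are missing the one idea that makes the argument short: integrate (\ref{WilsonJones}) three times from $\tau$ to $+\infty$ to get
\[
\Phi(\tau)-1=\int_{\tau}^{\infty}\!\!\int_{\tau_3}^{\infty}\!\!\int_{\tau_2}^{\infty}\Bigl(1-\frac{1}{\Phi(\tau_1)^3}\Bigr)\,d\tau_1\,d\tau_2\,d\tau_3,
\]
together with the analogous formulas for $\Phi'$ and $\Phi''$. With these in hand, the oscillation worry you flag as the ``main obstacle'' disappears immediately: if $\Phi>1$ on $(\bar\tau,\infty)$ and $\Phi(\bar\tau)=1$, the right-hand side above is strictly positive, a contradiction; hence $\Phi>1$ for \emph{all} $\tau$, with no possibility of crossing back. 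The same formulas then give $\Phi'(\tau)\le C<0$ for $\tau$ negative enough, which forces $\Phi\to+\infty$ as $\tau\to-\infty$ directly---no contradiction argument about bounded limit sets, no Lyapunov quantity, no appeal to hyperbolicity beyond knowing which side of $\Phi=1$ you start on.

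For the other branch ($\Phi<1$ near $P_s$), the integral formulas give $\Phi'>0$ and $\Phi''<0$ on the whole interval of existence, so $\Phi$ lies below every one of its tangent lines; this forces $\Phi$ to reach $0$ at a finite backward time in two lines. Your plan to run a $\Phi\le 2\varepsilon_0-\tfrac{K}{2}(\tau-\tau_n)^2$-type comparison is workable but heavier than needed: strict concavity plus positive slope is already enough. In short, the architecture of your proposal is correct, but the execution you sketch is substantially more laborious than the paper's, and the step you single out as hardest is in fact the easiest once you write down the triple-integral representation.
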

\begin{proof}
We first observe that integrating (\ref{WilsonJones}) three times we obtain
\begin{equation}\label{simple:int2}
\frac{d^{2}\Phi(\tau)}{d\tau^{2}}   =
-\int_{\tau}^{+\infty}\left( \frac{1}{(\Phi(\tau_{1}))^{3}}-1\right)  \,d\tau_{1}\,,
 \quad
\frac{d\Phi(\tau)}{d\tau} =
\int_{\tau}^{+\infty}\!\int_{\tau_2}^{+\infty}\left(\frac{1}{(\Phi(\tau_1))^3}-1\right)  \,d\tau_1\,d\tau_2 
\end{equation}
\begin{equation}\label{simple:int}
\Phi(\tau)-1=\int_{\tau}^{+\infty}\!\int_{\tau_{3}}^{+\infty}\!\int_{\tau_{2}}^{+\infty}
\left(  1-\frac{1}{(\Phi(\tau_{1}))^{3}}\right)  \ d\tau_1
\,d\tau_{2}\,d\tau_{3}\,. 
\end{equation}

Due to the fact that the trajectory under consideration is in the stable 
manifold of $P_{s}$ the corresponding orbit of $(\Phi,W,\Psi)$ is tangent to 
$P_{s}$ in the direction $v_{1}$. Suppose that this happens in such a way that 
there exists a $\bar{\tau}$ large enough such that $\Phi(\tau)>1$ for all 
$\tau\in(\bar{\tau},+\infty)$. Let $\bar{\tau}$ be
$
\bar{\tau}=\inf\{\tau:\ \Phi(\sigma)>1\ \forall\ \sigma\in(\tau,+\infty)\}
$.

Our goal is to prove that $\bar{\tau}=-\infty$. Suppose that $\bar{\tau}>-\infty$. 
Then, the continuity of $\Phi$ implies that $\Phi\left(  \bar{\tau}\right)  =1$. 
Using (\ref{simple:int}) with $\tau=\bar{\tau}$ and the definition of $\bar{\tau}$ 
we obtain a contradiction, whence $\bar{\tau}=-\infty$. Then (\ref{simple:int2}) implies 
$\frac{d\Phi(\tau)}{d\tau}\leq C<0$ for any $\tau$ sufficiently small, 
and therefore (\ref{C1E1}) follows.

Suppose now that the orbit $(\Phi,W,\Psi)$ is tangent to $P_{s}$ in the 
direction $-v_1$, then there exists $\bar{\tau}$ such that $\Phi(\tau)<1$ 
for all $\tau\in(\bar{\tau},+\infty)$. Now (\ref{simple:int2}) implies that 
$\frac{d\Phi}{d\tau}>0$ for all $\tau\in [\bar{\tau},+\infty)$. 
Thus, trivially, $\Phi(\tau)<1$ for all $\tau\in(\tau^{\ast},+\infty)$, where 
$\tau^{\ast}$ is defined by 
$\tau^{\ast}=\inf\{\tau:\ \Phi(\sigma)>0\ \forall\ \sigma\in(\tau,+\infty)\}$.
Let us prove that $\tau^{\ast}>-\infty$, so that in particular, by continuity, 
$\Phi(\tau^{\ast})=0$ and $\Phi(\tau)\in(0,1)$ for all $\tau\in(\tau^{\ast},+\infty)$. 
If, to the contrary, $\tau^{\ast}=-\infty$ then, since $\frac{d\Phi}{d\tau}>0$ and 
$\frac{d^{2}\Phi}{d\tau^{2}}<0$, it follows that
$\Phi(\tau) \leq \Phi(\tau_{1}) + \Phi^{\prime}(\tau_{1})(\tau-\tau_{1})$ for any 
$\tau_{1}\in\mathbb{R}$, hence $\tau^{\ast}>-\infty$.
\end{proof}

The solutions on the stable manifold are all those solutions with $\Phi(\tau)\to 1$ as $\tau\to \infty$: 
\begin{lemma} 
If $\Phi(\tau)$ is a solution of (\ref{WilsonJones}) with $\lim_{\tau\to\infty}\Phi(\tau)=1$, then $(\Phi,W,\Psi)\to P_s$.
\end{lemma}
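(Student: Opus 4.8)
The plan is to show that a solution with $\Phi(\tau)\to 1$ as $\tau\to\infty$ must actually converge to the full critical point $P_s=(1,0,0)$, i.e. that $W=\Phi'$ and $\Psi=\Phi''$ also tend to zero. Since the only critical point is $P_s$ and it is hyperbolic, the natural strategy is to use the integral representations already derived in the proof of Lemma~\ref{bh:stable:manifold}. First I would note that $\Phi\to 1$ forces $1/\Phi^3-1\to 0$, so the integrand $g(\tau):=1-1/\Phi(\tau)^3$ in (\ref{simple:int2})--(\ref{simple:int}) tends to $0$. The delicate point is that decay of $g$ to zero is not by itself enough to make the (triple) tail integrals converge; one needs integrability of $g$ near $+\infty$. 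So the first real step is to establish that $g\in L^1((\tau_0,\infty))$, or more precisely that the iterated tail integrals in (\ref{simple:int2}) are finite, which is exactly what is needed to give meaning to those formulas and to conclude $\Phi''(\tau)\to 0$ and $\Phi'(\tau)\to 0$ as $\tau\to\infty$.

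The key steps, in order: (i) Use forward global existence (Lemma~\ref{global:existence1}) so the solution is defined on some $[\tau_0,\infty)$ with $\Phi>0$. (ii) Show $\Phi$ is eventually bounded and bounded away from $0$ — immediate from $\Phi\to 1$ — hence $g(\tau)=1-\Phi(\tau)^{-3}$ is bounded and $g(\tau)\to 0$. (iii) Establish decay/integrability of $g$: I would argue that if $\Psi=\Phi''$ did not tend to $0$, then from $\Psi'=1/\Phi^3-1=-g\to 0$ one gets $\Psi$ converging to some limit $\ell$; if $\ell\neq 0$ then $\Phi'\sim \ell\tau$ and $\Phi$ would leave any neighborhood of $1$, contradicting $\Phi\to 1$, so $\ell=0$ and $\Psi\to 0$. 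Similarly $\Phi'=W$: since $\Phi$ is bounded and $W'=\Psi\to 0$, $W$ has a limit, and a nonzero limit again contradicts $\Phi\to 1$, so $W\to 0$. This already shows $(\Phi,W,\Psi)\to(1,0,0)=P_s$ as $\tau\to\infty$ using only elementary limit arguments, bypassing the need for sharp integrability estimates. (iv) Finally, invoke hyperbolicity of $P_s$ and the stable manifold theorem: the set of solutions converging to $P_s$ as $\tau\to\infty$ is precisely the (one-dimensional) stable manifold, so the trajectory lies on it; this is the statement the lemma is claiming.

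Alternatively, and perhaps more cleanly, once one knows the full state converges to $P_s$, the conclusion "$(\Phi,W,\Psi)\to P_s$" is literally that convergence, so step (iv) is just a remark that this places the orbit on the stable manifold (relevant because Lemmas~\ref{global:existence1} and~\ref{bh:stable:manifold} then apply to it). I would phrase the lemma's proof around step (iii): the heart is the observation that $\Psi'=\Phi^{-3}-1\to 0$ combined with $\Phi\to 1$ forces, successively, $\Psi\to 0$ then $W\to 0$, because a nonzero asymptotic value of $\Psi$ (resp.\ $W$) would drive $\Phi$ to $\pm\infty$ (resp.\ away from $1$) linearly, contradicting $\Phi(\tau)\to 1$.

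I expect the main obstacle to be making the "nonzero limit $\Rightarrow$ contradiction" argument fully rigorous when the relevant derivative merely tends to a limit rather than being monotone: one must rule out, e.g., $\Psi$ oscillating without a limit. This is handled by noting $\Psi' = -g$ with $g\to 0$ is not enough for $\Psi$ to have a limit, so I would instead integrate: $\Phi'(\tau) = \Phi'(\tau_0) + \int_{\tau_0}^{\tau}\Psi$, $\Phi(\tau) = \Phi(\tau_0) + \int_{\tau_0}^\tau \Phi'$, and use boundedness of $\Phi$ on $[\tau_0,\infty)$ together with $\Psi' \to 0$ to force $\int^\infty \Psi$ and the relevant double integral to behave well — concretely, if $\limsup_{\tau\to\infty}|\Psi(\tau)| = c > 0$ one can find arbitrarily large intervals on which $|\Psi|\geq c/2$ with a fixed sign (since $\Psi'\to 0$, $\Psi$ cannot change sign quickly), and integrating twice produces unbounded excursions of $\Phi$, the desired contradiction. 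The rest is routine.
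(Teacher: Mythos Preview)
Your proposal is correct, and the route is genuinely different from the paper's. The paper's proof is a short interpolation trick: writing $P(\tau;\tau_1)=\Phi(\tau)-\int_{\tau_1}^{\tau}\!\int_{\tau_1}^{\eta_1}\!\int_{\tau_1}^{\eta_2}\big(\Phi(\eta_3)^{-3}-1\big)\,d\eta_3\,d\eta_2\,d\eta_1$, the ODE makes $P(\cdot;\tau_1)$ the second-order Taylor polynomial of $\Phi$ at $\tau_1$; since $\Phi\to 1$ and the integrand tends to $0$, one has $P(\tau_1;\tau_1),P(\tau_1+1;\tau_1),P(\tau_1+2;\tau_1)\to 1$, and three values determine a quadratic, forcing $\Phi'(\tau_1),\Phi''(\tau_1)\to 0$. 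Your argument instead exploits $\Psi'=\Phi^{-3}-1\to 0$ directly: a persistent value $|\Psi|\ge c/2$ must occur on intervals of length $\to\infty$, and a second-difference (convexity) estimate on such an interval, e.g.\ $\Phi(a)+\Phi(a+L)-2\Phi(a+L/2)\ge cL^2/8$, contradicts boundedness of $\Phi$; then the same reasoning with $W'=\Psi\to 0$ gives $W\to 0$. The paper's approach is shorter and avoids any contradiction argument; yours is more elementary and makes the mechanism (slow variation of $\Psi$ forces large excursions of $\Phi$) transparent, at the cost of a bit more bookkeeping. One caution: your first pass in step~(iii), ``$\Psi'\to 0\Rightarrow \Psi$ has a limit'', is false as stated---you correctly flag and repair this in your final paragraph, and it is that repaired version (long intervals of fixed sign, then integrate twice using a symmetric/midpoint difference to kill the unknown $W(\tau_n)$) that should appear in the write-up.
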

\begin{proof}
We define $P(  \tau;\tau_1)$ as:
\begin{equation}
P(\tau;\tau_1) =\Phi(\tau) 
-\int_{\tau_1}^{\tau} \! \int_{\tau_1}^{\eta_1} \! \int_{\tau_1}^{\eta_2} 
\left[  \frac{1}{\Phi\left(  \eta_{3}\right)  ^{3}}-1\right]d\eta_3 \,d\eta_2 \,d\eta_1
\label{E1}
\end{equation}
Then, using $\lim_{\tau\to\infty}\Phi(\tau)=1$ 
as well as (\ref{S3E1}) it follows that for any $\tau_1$, $P(\tau;\tau_1)$ is a second 
order polynomial given by
\begin{equation}\label{E2}
P(\tau;\tau_1)=\Phi(\tau_1)+\frac{d\Phi(\tau_1)}{d\tau}(\tau-\tau_1)
+\frac{d^2\Phi(\tau_1)}{d\tau^2}(\tau-\tau_1)^2\,.
\end{equation}
Taking $\tau_1\to\infty$ and using (\ref{E1}), it is clear that
\[
\lim_{\tau_1\to\infty}P(\tau_1;\tau_1)  =\lim_{\tau_1\to\infty}P(  \tau_1+1;\tau_1) 
 =\lim_{\tau_1\to\infty} P(\tau_1+2;\tau_1)  =1\,.
\]  
Therefore (\ref{E2}) implies:
\[
\lim_{\tau_1\to\infty}\left[  \frac{d\Phi(\tau_1)}{d\tau}
+\frac{d^2\Phi(\tau_1)}{d\tau^2}\right]  =0\,,\quad  \lim_{\tau_1\to\infty}
\left[  \frac{d\Phi(\tau_1)}{d\tau}+2\frac{d^2\Phi(\tau_1)}{d\tau^2}\right]  =0 \,, 
\] 
and hence $(\Phi,W,\Psi)\to P_s$ as $\tau\to\infty$.  
\end{proof}

We now give the basic behaviour of the solutions that are not in the stable manifold:

\begin{lemma}\label{bh:unstable:manifold} 
Suppose that $\Phi$ is a solution of (\ref{WilsonJones}) such that $\Phi( \tau)$ does not 
converge to $1$ as $\tau\to\infty$. Then:
\begin{equation}\label{C1E3}
\lim\sup_{\tau\to\infty}\left(  \frac{1}{2\Phi^{2}}+\Phi\right)=+\infty\,.
\end{equation}
Moreover,
\begin{equation}\label{C1E3a}
\lim\sup_{\tau\to\infty}\Phi(\tau)  = +\infty
\quad\mbox{and}\quad 
\lim\inf_{\tau\to\infty}\Phi(  \tau)=0\,. 
\end{equation}
\end{lemma}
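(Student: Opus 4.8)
The plan is to build a conserved-type quantity and a Lyapunov-type argument directly from the equation. Multiplying (\ref{WilsonJones}) written as $\frac{d^3\Phi}{d\tau^3}=\frac{1}{\Phi^3}-1$ by $W=\frac{d\Phi}{d\tau}$ and noting that $\frac{d}{d\tau}\bigl(\frac{1}{2\Phi^2}+\Phi\bigr)=\bigl(1-\frac{1}{\Phi^3}\bigr)W = -W\Psi'$, I would integrate by parts to obtain the identity
\[
\frac{1}{2\Phi(\tau)^2}+\Phi(\tau) + W(\tau)\Psi(\tau) - \int_{\tau_0}^{\tau}\Psi(s)^2\,ds \;=\; \text{const}.
\]
Thus the function $E(\tau):=\frac{1}{2\Phi^2}+\Phi+W\Psi$ is nondecreasing along any solution (since $\int\Psi^2\,ds$ is nondecreasing), and in fact strictly increasing unless $\Psi\equiv 0$ on an interval, which forces the solution to be the constant $P_s$. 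This monotone energy is the engine of the whole argument.

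The first claim, (\ref{C1E3}), is then established by contradiction. Suppose $\frac{1}{2\Phi^2}+\Phi$ stays bounded above, say by $M$, for all large $\tau$. By forward global existence (Lemma~\ref{global:existence1}) $\Phi>0$, and the bound forces $\Phi$ to stay in a compact interval $[\delta,M]$ bounded away from $0$ and $\infty$. On such a region the right-hand side $\frac{1}{\Phi^3}-1$ is bounded, and one needs to upgrade this to boundedness of $W$ and $\Psi$; here I would use the energy identity: since $\frac{1}{2\Phi^2}+\Phi$ is bounded and $E$ is monotone (hence, if it is bounded above, convergent), the term $W\Psi$ together with $\int\Psi^2\,ds$ must be controlled. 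Boundedness of $\int_{\tau_0}^{\infty}\Psi^2\,ds$ combined with the fact that $\Psi'=\frac{1}{\Phi^3}-1$ is bounded gives (via a standard Barbalat-type argument) $\Psi(\tau)\to 0$; then $W'=\Psi\to0$ and a second Barbalat step on $W$, using that $\int W\Psi$ must also be controlled, forces $W\to 0$; finally $\Psi'\to 0$ yields $\frac{1}{\Phi^3}-1\to 0$, i.e. $\Phi\to 1$. That contradicts the hypothesis of the lemma, proving (\ref{C1E3}).

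For (\ref{C1E3a}), observe that $\frac{1}{2\Phi^2}+\Phi\to\infty$ along a sequence $\tau_n\to\infty$ forces either $\Phi(\tau_n)\to\infty$ or $\Phi(\tau_n)\to 0$ along a subsequence, so at least one of $\limsup\Phi=\infty$ or $\liminf\Phi=0$ holds. The work is to show both hold simultaneously. Suppose $\limsup_{\tau\to\infty}\Phi<\infty$; then $\Phi$ is bounded above by some $M$, so the only way $\frac{1}{2\Phi^2}+\Phi$ can be unbounded is $\liminf\Phi=0$. But if $\Phi\le M$ globally, then $\Psi'=\frac{1}{\Phi^3}-1\ge \frac{1}{M^3}-1$ is bounded below; more importantly, whenever $\Phi$ is small, $\Psi'$ is large and positive, which I would exploit to show that $\Psi$, and then $W$, and then $\Phi$ itself, must grow — contradicting the upper bound $M$ (this is essentially the mechanism already used in Lemma~\ref{global:existence1} to rule out finite-time extinction, run here on the infinite interval). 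Symmetrically, suppose $\liminf_{\tau\to\infty}\Phi>0$, so $\Phi\ge\delta>0$ globally and (\ref{C1E3}) forces $\limsup\Phi=\infty$. Then $\Psi'=\frac{1}{\Phi^3}-1\in[-1,\delta^{-3}-1]$ is bounded, hence $\Psi$ grows at most linearly, $W$ at most quadratically, and $\Phi$ at most cubically; but when $\Phi$ is very large, $\Psi'\approx -1$, a genuine restoring effect, and the monotone energy $E$ — which satisfies $E(\tau)\ge \frac{1}{2\Phi^2}+\Phi - |W\Psi|$ — must reconcile the unbounded growth of $\Phi$ with the polynomial bounds on $W,\Psi$, which I expect to yield a contradiction after a careful bookkeeping of the signs of $W$ and $\Psi$ near a large maximum of $\Phi$. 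The main obstacle is this last step: turning the soft monotonicity of $E$ and the polynomial a priori bounds into a hard contradiction when $\Phi$ is bounded below but unbounded above; I anticipate it requires tracking the phase of the oscillation (the sign pattern of $W$ and $\Psi$ across an excursion of $\Phi$) rather than a one-line estimate, and this is presumably where the detailed oscillation analysis of Section~\ref{section:3.2} is foreshadowed.
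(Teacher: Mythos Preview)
Your energy identity $E(\tau)=\frac{1}{2\Phi^2}+\Phi+W\Psi$ with $E'=\Psi^2\ge0$ is exactly the engine the paper uses, and your Barbalat chain ($\int\Psi^2<\infty\Rightarrow\Psi\to0\Rightarrow W\to0\Rightarrow\Phi\to1$) matches the paper's first step. However, what that argument actually proves is $E(\infty)=\infty$, not (\ref{C1E3}). In your contradiction argument for (\ref{C1E3}) you assume $\frac{1}{2\Phi^2}+\Phi\le M$, deduce $\Phi\in[\delta,M]$, and then assert ``boundedness of $\int_{\tau_0}^{\infty}\Psi^2\,ds$''. But that would require $E$ bounded, i.e.\ $W\Psi$ bounded, and nothing you have written rules out $W\Psi\to\infty$. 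This is a genuine gap: the paper closes it with a separate argument showing that if $\frac{1}{2\Phi^2}+\Phi$ is bounded then $|\Psi|$ cannot blow up along a subsequence (because $\Psi'$ is then bounded and a large $|\Psi|$ on a unit interval would force $\sup\Phi$ large), whence $W\Psi\to\infty$ forces $|W|\to\infty$, which again makes $\Phi$ unbounded.

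For (\ref{C1E3a}) your sketch is too soft, and you yourself flag the obstacle. The idea you are missing, and which makes the paper's proof short, is to \emph{evaluate $E$ at the local extrema of $\Phi$}. Once one knows $\limsup\Phi>\liminf\Phi$ (the paper gets $\liminf\Phi\le2$ from $\Phi'''\le-\tfrac12$ otherwise, and $\limsup\Phi\ge1$ from $\Phi'''\ge\delta>0$ otherwise, and rules out $\lim\Phi$ existing), there are sequences $\tau_n^\pm$ of local maxima and minima with $W(\tau_n^\pm)=0$. At those points $E(\tau_n^\pm)=\frac{1}{2\Phi(\tau_n^\pm)^2}+\Phi(\tau_n^\pm)$, and since $E\to\infty$ this immediately forces $\Phi(\tau_n^-)\to0$ (using $\liminf\Phi\le2$) and $\Phi(\tau_n^+)\to\infty$ (using $\limsup\Phi\ge1$). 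No phase tracking or bookkeeping of signs is needed; the monotone energy does all the work once you look at it where the cross term $W\Psi$ vanishes.
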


\begin{proof}
There is a monotonicity property associated to the solutions of (\ref{S3E1}). 
Indeed, multiplying (\ref{WilsonJones}) by $\frac{d\Phi}{d\tau}$ we obtain: 
\[ 
\frac{d}{d\tau}\left(  \frac{d^2\Phi}{d\tau^2}\frac{d\Phi}{d\tau}+\frac{1}{2\Phi^2}+\Phi\right)  
=\left(  \frac{d^2\Phi}{d\tau^{2}}\right)^2 \geq 0
\]
hence
\begin{equation}\label{S3E3}
\frac{dE}{d\tau}=\Psi^2 \geq 0 \,,\quad E:=\Psi W+\frac{1}{2\Phi^{2}}+\Phi\,.
\end{equation}

The energy estimate (\ref{S3E3}) yields estimates for $\Psi^{2}$ for the 
trajectories contained in the unstable manifold. More precisely we have: 
\[
\int_{\tau_0}^{\infty}(\Psi(s))^2 ds = E(\infty)  - E(\tau_0) 
\]
for any $\tau_0 \in \mathbb{R}$. Notice that $E(\infty)$ could 
be infinite. Actually, our goal is to show that $E(\infty)=\infty$ unless 
$\Phi\to 1$ as $\tau\to\infty$.

Suppose that $E(\infty)<\infty$, then
\begin{equation}\label{S3E4}
\int_{\cdot}^{\infty} \left(\Psi(s)\right)^{2} ds <\infty\,. 
\end{equation}
We claim that if this is the case then $\lim_{\tau\to\infty}\Psi(\tau)=0$. 
Indeed, the third equation in (\ref{S3E1}) yields:
\begin{equation}\label{S3E5}
\frac{d\Psi}{d\tau}\geq-1 \quad\mbox{for all}\quad\tau\in\mathbb{R}\,.
\end{equation}
Suppose that there exists a sequence $\{\tau_n \}$ such that 
$\lim_{n\to\infty}\tau_n=\infty$ and satisfying $\Psi(\tau_n) \geq\varepsilon_0>0$. 
Then (\ref{S3E5}) implies $\Psi(\tau)\geq\varepsilon_{0} - (\tau-\tau_n)$ 
for all $\tau>\tau_n$, hence
\[
\int_{\tau_n}^{\tau_n + \frac{\varepsilon_0}{2} } \left(\Psi(\tau)\right)^2 d\tau
\geq\frac{\varepsilon_0^3}{8}
\]
and this gives a contradiction with (\ref{S3E4}). Similarly, suppose that 
there exists a sequence $\{\tau_n\}$ such that $\lim_{n\to\infty}\tau_n=\infty$ 
and satisfying $\Psi(\tau_n)\leq -\varepsilon_0<0$. Then, using again (\ref{S3E5}) 
we obtain $\Psi(\tau) \leq -\varepsilon_0 +(\tau_n-\tau)$, $\tau<\tau_n$, hence
\[
\int_{\tau_n}^{\tau_n + \frac{\varepsilon_0}{2}} \left(\Psi(\tau)\right)^2 d\tau
\geq\frac{\varepsilon_0^3}{8}\,,
\]
that also yields a contradiction, therefore
\begin{equation}\label{S3E6}
\lim_{\tau\to\infty}\Psi(\tau)  =0 \,. 
\end{equation}

The first two equations of (\ref{S3E1}) imply:
\begin{align}
\sup_{s\in\left[ \tau,\tau + 1\right] }\left\vert W(s) - W(\tau)  \right\vert  
&  \to 0 \quad\mbox{ as }\quad \tau\to \infty
\label{S3E7}\\
\sup_{s\in\left[  \tau,\tau+1\right]  }\left\vert \Phi(s)
-\Phi(\tau)  - W(\tau)  (s-\tau)\right\vert  
&  \to 0\quad\mbox{as}\quad\tau\to\infty\,, 
\label{S3E8}
\end{align}
On the other hand, integration of the last equation of (\ref{S3E1}) implies 
\[
\Psi(\tau + 1)  -\Psi(\tau)  =\int_{\tau}^{\tau+1}
\left[  \frac{1}{\left(  \Phi(s)  \right)^3}-1\right]ds\,. 
\]
which together with (\ref{S3E6}) gives that
\begin{equation}\label{S3E10}
\lim_{\tau\to\infty}\int_{\tau}^{\tau+1}\left[\frac{1}{\left(\Phi(s)\right)^{3}}-1\right] ds=0 \,. 
\end{equation}

Therefore $\left(\inf_{s\in\left[\tau,\tau+1\right]}\Phi(s)\right)\leq 2$ for $\tau$ sufficiently large, 
since otherwise there would be a contradiction. It then follows that $\lim_{\tau\to\infty}W(\tau)=0$. 
Indeed, otherwise (\ref{S3E7}), (\ref{S3E8}) would 
imply that $\Phi$ would become negative for large values of $\tau$. 
It then follows from (\ref{S3E8}) that:
\[
\sup_{s\in\left[  \tau,\tau+1\right]  }\left\vert \Phi(s)
-\Phi(\tau)  \right\vert \rightarrow0\quad \mbox{as}\quad \tau\to\infty
\]
and (\ref{S3E10}) then yields $\lim_{\tau\to\infty}\Phi(\tau)=1$ against the hypothesis of the lemma. 
The contradiction yields $E(\infty)=\infty$.

We now prove (\ref{C1E3}). Suppose that
\begin{equation}\label{C3E5}
\lim\sup_{\tau\to\infty}\left(\frac{1}{2\Phi^2}+\Phi\right)<\infty\,. 
\end{equation}
Since $E(\infty)=\infty$, it follows from the definition of $E$ 
in (\ref{S3E3}) that 
\begin{equation}\label{C3E4}
\lim_{\tau\to\infty}\left[  \Psi(\tau)\,  W(\tau)  \right]  =\infty\,. 
\end{equation}
Suppose that $\lim_{j\to\infty}\left\vert \Psi(  \tau_j)\right\vert =\infty$ 
for some sequence $\left\{  \tau_j\right\}$ with 
$\lim_{j\to\infty}\tau_j=\infty$. Due to (\ref{C3E5}), the right-hand side of the last equation in (\ref{S3E1}) 
is bounded. Therefore, 
there exists a subsequence of $\left\{  \tau_j\right\}$, that we label in 
the same manner for simplicity, such that:
\[
\inf_{\tau\in\left[\tau_j-1,\tau_j + 1 \right]} \left\vert \Psi(\tau) \right\vert \to\infty\,.
\]
This means that $\inf_{\tau\in\left[  \tau_j-1,\tau_j + 1\right]}
\left\vert \frac{d^{2}\Phi}{d\tau^{2}}\right\vert \to\infty$. 
Therefore $\sup_{\tau\in\left[  \tau_j-1,\tau_j+1\right]}\Phi(\tau)\to\infty$. 
Indeed, if $\Phi(  \tau_j\pm 1)$ are bounded, and since we can determine uniquely $\Phi(\tau)$ 
in the interval $\tau\in(  \tau_j - 1,\tau_j + 1)$ using 
$\Phi(\tau_j\pm 1)$ and $\frac{d^2\Phi}{d\tau^2}(\tau)$, $\tau\in [\tau_j-1,\tau_j+1]$ the claim 
follows. This contradicts (\ref{C3E5}), whence $\lim\sup_{\tau\to\infty}\left\vert \Psi(\tau)\right\vert <\infty$. 
Then, (\ref{C3E4}) implies $\lim_{\tau\to\infty}W(\tau)=\infty$.
 Using the first equation in (\ref{S3E1}) it then follows that 
$\Phi(\tau)$ is unbounded, and this contradicts again 
(\ref{C3E5}), whence (\ref{C1E3}) follows.

It only remains to prove (\ref{C1E3a}). We first notice that
\begin{equation}\label{C3E7}
\lim\inf_{\tau\to\infty}\Phi(\tau) \leq 2\,. 
\end{equation}
Indeed, otherwise it follows from the last two equations of (\ref{S3E1}) that 
$\frac{d^3\Phi}{d\tau^3}\leq-\frac{1}{2}$, this implies that $\Phi(\tau)$ vanishes for a finite value of $\tau$. 
This contradicts Lemma~\ref{global:existence1}.

We now claim that $\lim\sup_{\tau\to\infty}\Phi\geq1$. We argue by
contradiction. Suppose that
\begin{equation}\label{C3E6}
\lim\sup_{\tau\to\infty}\Phi<1\,,
\end{equation}
then, using the last equation in (\ref{S3E1}), we have $\frac{d\Psi}{d\tau}>\delta>0$ 
for $\tau$ large enough. This means that $\lim_{\tau\to\infty}\Psi(\tau)  =\infty$. 
Using again (\ref{S3E1}), we obtain $\lim_{\tau\to\infty} W(\tau)  =\infty$ and  
$\lim_{\tau\to\infty}\Phi(\tau)  =\infty$, this contradicts (\ref{C3E6}) and therefore implies 
$\lim\sup_{\tau\to\infty}\Phi\geq 1$.

We now claim that $\lim\sup_{\tau\to\infty}\Phi>\lim\inf_{\tau\to\infty}\Phi(\tau)$. 
Suppose to the contrary that
$\lim_{\tau\to\infty}\Phi(\tau)=\ell_{\infty}\geq 1$ is defined. If $\ell_{\infty}>1$, 
we obtain a contradiction because (\ref{S3E1}), in that case, implies that $\Phi(\tau)$ vanishes at some 
finite $\tau$, but this contradicts Lemma~\ref{global:existence1}. And the case $\ell_{\infty}=1$ is against the assumption of the lemma.

We have then $\lim\sup_{\tau\to\infty}\Phi(\tau)>\lim\inf_{\tau\to\infty}\Phi(\tau)$. 
This implies the existence of sequences of minima and maxima of 
$\Phi$, namely, $\{  \tau_n^+ \}$, $\{ \tau_n^-\}$ and $\{ \varepsilon_n \}$, 
such that $\lim_{n\to\infty}\tau_n^+=\lim_{n\to\infty}\tau_n^-=\infty$, 
$\varepsilon_n>0$ for all $n$, and such that $W(\tau_n^+)=W(  \tau_n^-)=0$ with 
$\max_{\tau\in\left[ \tau_n^+ - \varepsilon_n,\tau_n^+ +\varepsilon_n\right]}\Phi(\tau)=\Phi(\tau_n^+)$, 
$\min_{\tau\in\left[  \tau_n^+-\varepsilon_n,\tau_n^+ + \varepsilon_n\right]  } \Phi(\tau)=\Phi(\tau_n^-)$ and
 $\lim_{n\to\infty}\Phi(\tau_n^+) =\lim\sup_{\tau\to\infty}\Phi(\tau)$, 
$\lim_{n\to\infty}\Phi(\tau_n^-) =\lim\inf_{\tau\to\infty}\Phi(\tau)$. 
Suppose that $\lim\inf_{\tau\to\infty}\Phi(\tau) >0$. 
Due to (\ref{C3E7}) and the definition of $E$ in (\ref{S3E3}) it then follows
that
\[
E\left(  \tau_{n}^{-}\right)  =\frac{1}{2(\Phi(  \tau_n^-))^2}+\Phi(\tau_n^-)  
\leq\frac{1}{\left[  \lim\inf_{\tau\to\infty}\Phi(\tau)  \right]^{2}}+3
\]
for $n$ sufficiently large. This contradicts that $E(\infty)=\infty$ and the second 
formula in (\ref{C1E3a}) follows. Suppose now that $\lim\sup_{\tau\to\infty}\Phi(\tau)<\infty$. 
Using again the definition of $E$ and that $\lim\sup_{\tau\rightarrow\infty}\Phi\geq1$ we 
obtain
\[
E(\tau_n^+)  =\frac{1}{2(\Phi(\tau_n^+))^2}
+\Phi(\tau_n^+)  \leq 1 + 2\lim\sup_{\tau\to\infty}\Phi(\tau)
\]
which also contradicts that $E(\infty)=\infty$.
\end{proof}

\subsection{Oscillations: The formal description}\label{section:3.2}
We now formally describe in some detail the oscillatory solutions of (\ref{WilsonJones}).  
Such solutions have increasing amplitude of oscillation as $\tau$ 
increases (see Lemma~\ref{bh:unstable:manifold}). When this amplitude becomes rather 
large as $\tau\to\infty$, one expects the behaviour of the solution to be 
dominated by (\ref{SEphi-inf}). This equation can be solved explicitly:
\begin{equation}\label{polynomial}
\Phi(\tau)=-\tau^{3}/6+c_{1}\tau^{2}+c_{2}\tau\,. 
\end{equation}
Thus the solution in such regimes resembles a third order polynomial near a local maximum. 
For larger values of $\tau$ the magnitude of the solution 
becomes small and its behaviour is dominated instead by (\ref{SEphi0}), 
but, as it is proved in Lemma~\ref{global:existence1}, the solution does not 
vanish for finite values of $\tau$ and it then increases back again to a 
larger amplitude. How this happens is described by (\ref{SEphi0}). 
Thus the function $\Phi(\tau)$ can be described by
 means of alternating regimes, where either $\Phi(\tau)$ becomes very 
large or $\Phi(\tau)$ is close to zero. As it will be seen later, the matching between 
such regimes will require that, when $\Phi$ becomes small for increasing $\tau$, the polynomial
of the form (\ref{polynomial}) that is asymptotically close to the solution, has a simple zero. 
On the other hand, when the solution leaves a local minimum (sufficiently close to $0$ 
for $\tau$ large enough) the polynomial (\ref{polynomial}) that it approaches asymptotically has a double zero. 
These, linear and quadratic, behaviours near $\Phi\sim 0$ result from the analysis of (\ref{SEphi0}),
this is done in Appendix~2, see Theorem~\ref{matchsolution}.

Since $\Phi(\tau)$ is oscillatory (see (\ref{C1E3a})), there exists an increasing sequence 
$\{\tau_n^+\}$ of local maxima, thus having $\frac{d\Phi}{d\tau}(\tau_n^+)=0$. For simplicity of notation,
for each $n$, we define
\[
L_n=\Phi(\tau_n^+)  \,.
\]
Then Lemma~\ref{bh:unstable:manifold} (\ref{C1E3a}) implies $\lim_{n\to\infty}L_n=\infty$. 
As indicated above, $\Phi(\tau)$ must have a double zero, at least to the leading order as $L_n\to\infty$. 
Using the fact that $\Phi$ solves (\ref{SEphi-inf}) to the leading order, it 
can be seen by means of an algebraic computation, that the only possibility is 
to have the following asymptotics if $L_n\to\infty$:
\begin{equation}\label{P1E1}
\Phi(\tau_n^+)=L_n\,,\ \frac{d}{d\tau}\Phi(\tau_n^+)=0\,,\ \frac{d^{2}}{d\tau^2}\Phi(\tau_n^+)=-a L_n^\frac{1}{3}\,,
\end{equation}
since $\Phi(\tau)$ must have a double zero at some $\tau<\tau_n^+$, to leading order as $L_n\to\infty$, 
$a$ will be chosen later in order to fulfil this condition. 
We introduce the scaling, suggested by (\ref{P1E1}),
\begin{equation}\label{2order1:scale}
\Phi(\tau)  =L_n \bar{\Phi}_n(\eta_n)\,,\quad \eta_n=\frac{\tau-\tau_n^+}{L_n^\frac{1}{3}}
\end{equation}
where $\bar{\Phi}_n(\eta_n)$ satisfies
\begin{equation}\label{W1E1}
\frac{d^{3}}{d\eta_n^3}\bar{\Phi}_n + 1=\frac{1}{L_n^3}\frac{1}{\left(\bar{\Phi}_n\right)^{3}}\,,  
\end{equation}
that, to leading order, becomes
\begin{equation}\label{lim-eq:osci:outer}
\frac{d^{3}}{d\eta_n^3}\bar{\Phi}_n + 1 = 0\,,
\end{equation}
with initial conditions
\begin{equation}\label{lim-eq:osci:outer:init}
\bar{\Phi}_{n}(0)=1\,,\ \frac{d\bar{\Phi}_n}{d\eta_n}(0)=0\,,\ \frac{d^2\bar{\Phi}_n}{d\eta_n^2}(0)=-a\,.
\end{equation}
Solving (\ref{lim-eq:osci:outer})-(\ref{lim-eq:osci:outer:init}) gives the 
approximation $\bar{\Phi}_n(\eta_n)=P_{\infty}(\eta_n)$, where
\begin{equation}\label{polynomial:eta}
P_{\infty}(\eta)=-\frac{1}{6}\eta^{3}-\frac{a}{2}\eta^{2}+1\,.
\end{equation}
Imposing now that $P_{\infty}$ has a double zero at 
a negative $\eta_0$ requires:
\[
\bar{\Phi}_{n}(\eta_{0})=-\frac{1}{6}\eta_{0}^{3}-\frac{a}{2}\eta_0^2 + 1=0\,,
\quad \bar{\Phi}_n^{\prime}(\eta_0)=-\frac{1}{2}\eta_0^2-a\eta_0=0\,,
\]
i.e.
\begin{equation}\label{a:value}
\eta_{0}=-2a\,,\quad  a=\left(  \frac{3}{2}\right)^{\frac{1}{3}}\,.
\end{equation}
On the other hand, the polynomial $P_{\infty}(\eta)$ vanishes at a 
positive value of $\eta$, namely at $\eta_{a}:=\left(\frac{3}{2}\right)^{\frac{1}{3}}$. 
We can then approximate the function $\Phi(\tau)$ in the 
intervals where $\tau \in \left(  \tau_n^+ + \eta_0 L_n^\frac{1}{3}, \tau_n^+ + \eta_a L_n^\frac{1}{3}\right)$ 
by means of (\ref{2order1:scale}) and (\ref{polynomial:eta}) with (\ref{a:value}).
The approximations of $\bar{\Phi}$ by (\ref{polynomial:eta}) near $\eta_a^-$ and near $\eta_0^+$ are then as follows 
\begin{equation}\label{linear:bh}
\bar{\Phi}_n(\eta_n)\sim - K(\eta_n-\eta_a)\quad\mbox{as}\quad\eta_n \to \eta_a^-
\quad\mbox{with}\quad 
K=-\frac{d}{d\eta_n}\bar{\Phi}_n(\eta_a)=\left(  \frac{3}{2}\right)^{\frac{5}{3}}>0
\end{equation}
and 
\[
\bar{\Phi}_n(\eta_n)\sim \frac{a}{2}(\eta_n-\eta_0)^2
\quad\mbox{as}\quad \eta_{n}\to\eta_0^+,  
\quad\mbox{since}\quad \frac{d^2}{d\eta_n^2}\bar{\Phi}_n(\eta_0)=a=\left(  \frac{3}{2}\right)^{\frac{1}{3}}\,. 
\]

In order to describe the function $\Phi(\tau)$ for the values of $\tau$ where 
$\Phi$ becomes small we introduce an inner layer variable near $\eta_a$ for 
every $n$. Using (\ref{linear:bh}) we can infer that this inner layer is characterised by 
\begin{equation}\label{innerlayer}
\eta_n=\eta_a + \frac{1}{L_n^3}\zeta_n\quad\bar{\Phi}(\eta_n)=\frac{1}{L_n^3}\varphi_n(\zeta_n) 
\end{equation}
with $\varphi(\zeta_n)$ satisfying, to leading order if $n$ is large enough, 
the equation
\begin{equation}\label{lim-eq:osci:inner}
\frac{d^3\varphi_n}{d\zeta_n^3}=\frac{1}{\varphi_n^3}
\end{equation}
with the matching condition
\begin{equation}\label{lim-eq:osci:inner:init}
\varphi_n (\zeta_n)  \sim -K\zeta_n + \frac{\log(L_n^3)}{2K^3}+ \frac{1}{4K^3} + o(1)  
\quad \mbox{as}\quad \zeta_n\to -\infty\,,
\end{equation}
where the logarithmic correction and the next order has been obtained, 
computing the next order in the asymptotics of the solution of (\ref{W1E1}) and (\ref{lim-eq:osci:outer:init}). 
More precisely, this correction is given by using the leading behaviour (\ref{linear:bh}), namely  
\[
\int_0^{\eta_n}\int_0^{s_1}\int_0^{s_2}\frac{1}{\left(\bar{\Phi}_n(s_3)\right)^3}\, ds_3\,ds_2\, ds_1
=-\frac{1}{2K^3}\log\left(\left| \eta_n - \eta_a\right| \right)  + 
\frac{3}{2K^3}  + o(1)\quad \mbox{as}\quad\eta_n\to\eta_a^-
\]
where $\bar{\Phi}_n(s_3)$ is as in (\ref{polynomial:eta}).

Using Theorem~\ref{matchsolution} in Appendix~2, it follows that there exists a 
unique solution of (\ref{lim-eq:osci:inner}), (\ref{lim-eq:osci:inner:init}) 
given by $\varphi_n(\zeta_n)  =\varphi\left(\zeta_n -\frac{\log(L_n^3)}{2K^4}- \frac{3}{2K^4} \right)$. 
Therefore:
\begin{equation}\label{matchforward}
\varphi_n (\zeta_n)  \sim \Gamma\zeta_n^2
\quad \mbox{as} \quad \zeta_n \to\infty\,,
\end{equation}
where $\Gamma>0$ would in fact be fixed by the matching region.  
We observe that the matching condition (\ref{lim-eq:osci:outer:init}) is valid for 
$\left\vert \zeta_n \right\vert \gg \log(L_n)$, $\left\vert\zeta_n\right\vert \ll L_n^3$, $\zeta_n<0$. 
On the other hand, the matching condition (\ref{matchforward}) is valid for $\left\vert \zeta_n\right\vert \gg\log(L_n)$, 
$\zeta_n>0$. In order to determine the maximal region of validity for this asymptotics we need to find 
the size of the region where the second term on the right-hand side of 
(\ref{W1E1}) becomes relevant. A standard dominated balance argument indicates 
that this happens for $\eta_{n}-\eta_{a}=O(L_{n}^{3})$. 
We then use the following change of variables:
\begin{equation}\label{W1E4}
\bar{\Phi}_n(\eta_n) = \left(\frac{2\Gamma}{a}\right)^3 L_n^9 \bar{\Phi}_{n+1}(\eta_{n+1})\,,
\quad \eta_{n+1} + 2a = \frac{a}{2\Gamma}\frac{\eta_n-\eta_a}{L_n^3} 
\end{equation}
where $a$ is as in (\ref{a:value}). The particular rescaling has been chosen 
in order to have $\bar{\Phi}_{n+1}(0)=1$ to the leading order. Notice that, 
using (\ref{W1E1}) we obtain:
\begin{equation}\label{W1E2}
\frac{d^3}{d\eta_{n+1}^3}\bar{\Phi}_{n+1}+1=\left(  \frac{a}{2\Gamma}\right)^{9}\frac{1}{L_n^{30}}\frac{1}{\left(\bar{\Phi}_{n+1}\right)^3}
\end{equation}
Using (\ref{innerlayer}) and (\ref{matchforward}) we obtain the matching 
condition for $\bar{\Phi}_{n+1}(\eta_{n+1})$:
\begin{equation}\label{W1E3}
\bar{\Phi}_{n+1}(\eta_{n+1})\sim\frac{a}{2}\left(  \eta_{n+1}+2a\right)^2
\quad \mbox{as}\quad \eta_{n+1}\to -2a
\end{equation}
and the solution of (\ref{W1E2}), (\ref{W1E3}) to the leading order gives the 
approximation $\bar{\Phi}_{n+1}(\eta_{n+1})=P_{\infty}(\eta_{n+1})$. 
Notice that we obtain $\bar{\Phi}_{n+1}(0)=1$ to the leading 
order, as expected. Note also that (\ref{2order1:scale}), (\ref{W1E4}) imply:
\[
\Phi(\tau)  =\left(  \frac{2\Gamma}{a}\right)^3 L_n^{10}\bar{\Phi}_{n+1}(\eta_{n+1})\,,
\quad \eta_{n+1}=-2a+\frac{a}{2\Gamma}\frac{(\tau-\tau_n^+) -\eta_a L_n^\frac{1}{3}}{L_n^{\frac{10}{3}}}\,.
\]
Equation (\ref{W1E2}) is equivalent to (\ref{W1E1}) and therefore we have
\begin{equation}\label{W1E5}
L_{n+1}=\left(  \frac{2\Gamma}{a}\right)^3 L_n^{10} 
\end{equation}
On the other hand, since $\eta_{n+1}=0$ at $\tau=\tau_{n+1}^+$ we obtain:
\begin{equation}\label{W1E7}
\tau_{n+1}^+=\tau_n^+ + \eta_a L_n^\frac{1}{3} + 4\Gamma L_n^{\frac{10}{3}}
\end{equation}

The sequences (\ref{W1E5}) and (\ref{W1E7}) describe how the increase in the amplitude of $\Phi$ takes place. 
Notice that:
\begin{equation}\label{W1E8}
L_n=\frac{a^{\frac{1}{3}}\exp\left(  C (10)^{n}\right)}{ (2\Gamma)^{\frac{1}{3}}} (1+o(1))\,,
\quad \tau_n^+=2a^{\frac{10}{9}} (2\Gamma)^{\frac{2}{3}}\exp\left(\frac{C}{3} (10)^{n}\right) (1+o(1))  
\quad \mbox{as} \quad n\to\infty
\end{equation}

This formula can be easily obtained from (\ref{W1E5}) using the fact that 
$Z_n=\log(L_n)$ solves the linear recursive equation
\[
Z_{n+1}=10 Z_n + 3\log\left(  \frac{2\Gamma}{a}\right)\,.
\]
Moreover, suppose that we denote by $M_n$ the minimum values of $\Phi(\tau)$ 
in the interval $(\tau_n^+,\tau_{n+1}^+)$, and that these minima are reached 
at the points $\tau_n^-\in(\tau_n^+,\tau_{n+1}^+)$. Then, using (\ref{2order1:scale}) and
(\ref{innerlayer}), as well as the arguments yielding (\ref{W1E7}) we obtain:
\begin{equation}\label{W1E8a}
M_n=\frac{\min_{\mathbb{R}}\varphi(\zeta)}{L_n^2}(1+o(1)) \,,
\quad \tau_n^-=\left[  2a^{\frac{10}{9}} (2\Gamma)^{\frac{2}{3}} + 
\frac{\eta_a a^{\frac{1}{9}}}{(2\Gamma)^{\frac{1}{9}}}\right]  
\exp\left(  \frac{C}{3} (10)^n \right) ( 1+o(1))\quad \mbox{as} \quad n\to\infty
\end{equation}
where $\varphi$ is as in Theorem~\ref{matchsolution}.

\subsection{Oscillations: The rigorous construction}\label{section:3.3}
In this Section we prove that every solution of (\ref{WilsonJones}) 
such that $\Phi(\tau)$ does not converge to $1$ as $\tau\to\infty$ oscillates with increasing amplitude 
as $\tau\to\infty$ in the form described above by the formal asymptotics. Namely, we prove the following theorem:

\begin{theorem}\label{mainoscillation}
Suppose that $\Phi(\tau)$ is a solution of (\ref{WilsonJones}) that is not included in the stable manifold of the point
$P_{s}$. 
There exists two increasing sequences $\{\tau_n^-\}$, $\{\tau_n^+\}$ satisfying 
$\tau_n^+<\tau_n^-<\tau_{n+1}^+$ and $\lim_{n\to\infty}\tau_n^+=\infty$, such that:
\[
\frac{d\Phi}{d\tau}(\tau_n^-)  =\frac{d\Phi}{d\tau}(\tau_n^+)=0\quad \,,
\quad\frac{d\Phi}{d\tau}(\tau)  >0 \quad \mbox{in} \quad (\tau_n^-,\tau_n^+)  \,,
\quad \Phi_{\tau}\left(\tau\right)  <0 \quad\mbox{in }\quad(\tau_n^+,\tau_{n+1}^- )
\]
Moreover, for every $n$, let $L_n$ and $M_n$ be
\begin{equation} \label{defLnMn}
L_n:=\Phi(\tau_n^+)  \,,\quad M_n:=\Phi(\tau_n^-) 
\end{equation}
Then, the asymptotics (\ref{W1E8}) and (\ref{W1E8a}) hold, and also:
\begin{equation}\label{secder}
\frac{d^2\Phi(\tau_n^+)}{d\tau^2}  =-\left(  \frac{3}{2}\right)^{\frac{1}{3}}L_n^\frac{1}{3}(1+o(1))
\quad \mbox{as} \quad n\to\infty
\end{equation}
\end{theorem}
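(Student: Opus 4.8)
The plan is to upgrade the purely formal matched-asymptotics analysis of Section~\ref{section:3.2} into a rigorous induction on $n$, using the rigorous information already in hand: by Lemma~\ref{bh:unstable:manifold} we know that $\Phi$ genuinely oscillates with $\limsup \Phi = \infty$, $\liminf \Phi = 0$; by Lemma~\ref{global:existence1} it never vanishes; and the energy identity (\ref{S3E3}) gives monotone control of $E$. First I would make precise the starting point of the induction: since $E(\tau)\nearrow\infty$ and $\Phi$ oscillates, there is a sequence of local maxima $\tau_n^+$ with $L_n=\Phi(\tau_n^+)\to\infty$ and local minima $\tau_n^-$ with $M_n=\Phi(\tau_n^-)\to 0$, interlaced as claimed; the sign statements for $d\Phi/d\tau$ follow once one shows that between a minimum and the next maximum $\Phi$ is strictly increasing, which one gets from the third equation of (\ref{S3E1}) (when $\Phi<1$ one has $\Psi'=\Phi^{-3}-1>0$, forcing eventual growth) together with the non-vanishing from Lemma~\ref{global:existence1} to rule out spurious extra critical points.

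The core is a rigorous version of the rescaling chain (\ref{2order1:scale})--(\ref{W1E4}). Fix $n$ large, set $\bar\Phi_n(\eta)=L_n^{-1}\Phi(\tau_n^+ + L_n^{1/3}\eta)$, so $\bar\Phi_n$ solves (\ref{W1E1}) with $\bar\Phi_n(0)=1$, $\bar\Phi_n'(0)=0$, and $\bar\Phi_n''(0)=L_n^{-1/3}\,\Phi''(\tau_n^+)=:-a_n$. The inhomogeneous term $L_n^{-3}\bar\Phi_n^{-3}$ is small as long as $\bar\Phi_n$ stays bounded below; by a Gronwall/continuity argument on the integral equation, $\bar\Phi_n$ stays uniformly close on compact $\eta$-intervals to the solution of (\ref{lim-eq:osci:outer}) with the same data, namely $-\tfrac16\eta^3 - \tfrac{a_n}{2}\eta^2 + 1$. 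The next step — and this is where the value $a=(3/2)^{1/3}$ is pinned down rigorously — is to show $a_n \to a$. One direction: if $a_n$ stayed bounded away from $a$ along a subsequence, the approximating cubic would either have no negative double root (so $\bar\Phi_n$ would dip below $0$ at a negative $\eta$ of order one, contradicting positivity), or would cross zero transversally before reaching the subsequent minimum in a way incompatible with the cubic near the next maximum having a negative \emph{double} root; tracking the solution through the inner layer (\ref{innerlayer})--(\ref{lim-eq:osci:inner}) and invoking Theorem~\ref{matchsolution} for the unique connecting solution $\varphi$ shows the matching forces $a_n\to a$ and simultaneously yields (\ref{secder}).

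With $a_n\to a$ established, the recursion closes: the inner solution is $\varphi_n(\zeta)=\varphi(\zeta - \tfrac{\log(L_n^3)}{2K^4} - \tfrac{3}{2K^4}) + o(1)$ by uniqueness in Theorem~\ref{matchsolution}, giving $\varphi_n(\zeta)\sim\Gamma\zeta^2$ with $\Gamma=\tfrac12\min_{\mathbb R}\varphi''$ (or whatever constant Theorem~\ref{matchsolution} fixes), hence $M_n = L_n^{-2}\min_{\mathbb R}\varphi\,(1+o(1))$; then the change of variables (\ref{W1E4}) exhibits $\bar\Phi_{n+1}$ as again a near-cubic with $\bar\Phi_{n+1}(0)=1+o(1)$, which identifies the next local maximum and yields $L_{n+1}=(2\Gamma/a)^3 L_n^{10}(1+o(1))$ and $\tau_{n+1}^+ = \tau_n^+ + \eta_a L_n^{1/3} + 4\Gamma L_n^{10/3}(1+o(1))$. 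Solving the resulting linear recursion for $Z_n=\log L_n$, $Z_{n+1}=10Z_n + 3\log(2\Gamma/a) + o(1)$, gives $Z_n = C\cdot 10^n + \text{const} + o(1)$ and hence the double-exponential formulas (\ref{W1E8}), (\ref{W1E8a}); the $\tau$-asymptotics follow because the increments are dominated by the $4\Gamma L_n^{10/3}$ term, so $\tau_n^+ \sim 4\Gamma\, L_{n-1}^{10/3}$, which rewrites as the stated expression.

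The main obstacle is making the error propagation uniform in $n$, rather than merely for each fixed $n$: the solution is followed through three different scalings per period (outer cubic regime, inner $\varphi$-regime, re-expansion to the next cubic), and the small parameters are $L_n^{-3}$, $L_n^{-30}$, etc., so the accumulated $o(1)$ errors must be shown to contract faster than the recursion amplifies them. Concretely one needs a quantitative statement of Theorem~\ref{matchsolution} — continuous dependence of the connecting solution $\varphi$ on its far-field linear data with an explicit modulus — together with a Gronwall estimate on (\ref{W1E1}) that is uniform on the $\eta$-interval $[\eta_0,\eta_a]$; given those, the induction hypothesis "$a_n = a + o(1)$, $L_n$ satisfies (\ref{W1E8}) to the stated order" reproduces itself, and a standard bootstrap closes the argument.
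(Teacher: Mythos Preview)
Your overall architecture matches the paper's --- rescale near each maximum, track through the inner layer, rescale to the next maximum, extract a recursion --- but two points diverge in ways that matter.

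First, your argument for $a_n\to a$ is inverted. You try to pin down $a_n$ by looking backward to $\eta<0$ and arguing that unless the approximating cubic has a negative double root the solution would vanish. It would not: the inner layer at the \emph{previous} minimum intervenes and keeps $\bar\Phi_n>0$ regardless of the root structure of the current cubic, so no contradiction arises, and the alternative you offer (``incompatible with the next cubic having a negative double root'') is circular. The paper never looks backward from $\tau_n^+$; it runs the analysis forward with $a_n$ completely free --- possibly unbounded, which is why all the intermediate estimates carry factors of $\eta_{a_n}$ (with $\eta_{a_n}\sim a_n^{-1/2}$ when $a_n$ is large) --- derives the next maximum via Lemma~\ref{lastmatch}, and reads off $a_{n+1}= -Q_n''(Y_{\ast,n})/Q_n(Y_{\ast,n})^{1/3}+O(\delta)$ from (\ref{W7E3}). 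Convergence $a_{n+1}\to a$ then falls out because $B_n\to 0$. So $a_n\to a$ is an \emph{output} of the recursion, not an input to it.

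Second, and more seriously, your inner-layer step ``track through (\ref{innerlayer})--(\ref{lim-eq:osci:inner}) and invoke Theorem~\ref{matchsolution}'' glosses over the main analytic difficulty. A Gronwall argument on (\ref{W1E1}) is only good where $\bar\Phi_n$ is bounded below; near $\eta_{a_n}$ the forcing $L_n^{-3}\bar\Phi_n^{-3}$ is order one, and a continuous-dependence appeal to Theorem~\ref{matchsolution} does not by itself give uniform-in-$n$ control of the exit data (the very issue you flag in your last paragraph). The paper's device here is the heart of the proof and is different from what you propose: it rewrites the inner equation in the self-similar variables (\ref{W2E9}), reducing it to the perturbation (\ref{ODEn3}) of the \emph{planar} system (\ref{phi0system}), and then proves (Lemma~\ref{BL}) that the perturbed orbit shadows the separatrix $\bar v(u)$ of Lemma~\ref{separa} across a range $u\in[u^{\ast},u^{\ast\ast}]$ with $u^{\ast\ast}\sim L_n^3/\eta_{a_n}^5$. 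The weight functions $G$, $H$ of Lemma~\ref{GH} encode the contraction/expansion rates along and transverse to $\bar v$, and this is what makes the error propagation uniform in $n$; ``quantitative Theorem~\ref{matchsolution} plus Gronwall'' is not the right toolkit for this step.
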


\begin{remark}
Notice that Theorem~\ref{mainoscillation} implies that every solution of 
(\ref{WilsonJones}) that does not approach $\Phi=1$ as 
$\tau\to\infty$ oscillates for large values of $\tau$ in the precise 
manner indicated in the Theorem.
\end{remark}

The proof of Theorem \ref{mainoscillation} will be decomposed in a series of Lemmas. 
We first observe that the existence of the sequences $\{  \tau_n^+\}$ and $\{  \tau_n^-\}$ 
is a direct consequence of Lemma~\ref{bh:unstable:manifold}.  Also, if $L _n$ and $M_n$ are as in 
(\ref{defLnMn}) for all $n$, then 
Lemma~\ref{bh:unstable:manifold} implies that $\lim_{n \to\infty}L_n=\infty$ and $\lim_{n\to\infty}M_n=0$. 
We define the sequence $\{a_n\}\subset \mathbb{R}$ by means of
\[
\frac{d^2}{d\tau^2}\Phi(\tau_n^+)= - a_n L_n^\frac{1}{3}\,,\quad\mbox{where}\quad a_n \geq 0\,
\]
and we shall then prove that $a_n\to a$, with $a$ as in (\ref{a:value}), as $n\to\infty$. 
This then implies (\ref{secder}).

We define a sequence of functions $\bar{\Phi}_n( \eta_n)$ as in (\ref{2order1:scale}), but  
where $\bar{\Phi}_{n}$ solves (\ref{W1E1}) with the initial conditions: 
\begin{equation}\label{Incond}
\bar{\Phi}_n(0)=1\,,\ \frac{d\bar{\Phi}_n}{d\eta_n}(0)=0\,,\ \frac{d^2\bar{\Phi}_n}{d\eta_n^2}(0)= - a_n\,.
\end{equation}
Observe that the only difference with (\ref{lim-eq:osci:outer:init}) is that the last condition depends on $n$. We define the approximating 
polynomials and their roots accordingly; letting $P_{a_n}(\eta)=-\frac{1}{6}\eta^{3}-\frac{a_n}{2}\eta^2 + 1$, 
we define $\eta_{a_n}$ as the solution of
\[
P_{a_n}(\eta_{a_n})=0\,,\quad \eta_{a_n}>0\,.
\]
We observe that when $a_n$ is very large then $\eta_{a_n}\sim 1/\sqrt{a_n}$, and 
otherwise $\eta_{a_n}$ is of order one. For that reason and in order to avoid studying several possible regimes for $a_n$, 
the scalings with $L_n$ would include a factor depending on $a_n$, as we shall see below.

We first derive some approximation formulae for the functions $\bar{\Phi}_n (\eta_n)$ in the region where they are not too small.
\begin{lemma}\label{Lemma1}
Let $\Phi$ a solution of (\ref{WilsonJones}) satisfying the 
assumption of Theorem \ref{mainoscillation}. Let $\bar{\Phi}_n (\eta_n)$ be as in (\ref{2order1:scale}). 
Then, there exists a $N>0$ independent of $n$ and of $a_{n}$ such that the
 estimates 
\begin{equation}\label{W1E9}
\bar{\Phi}_n(\eta_n)\geq\frac{1}{2\eta_{a_n}}\left(  \eta_{a_n}-\eta_n\right)\,, 
\end{equation}
\begin{equation}\label{W2E1}
\left\vert \bar{\Phi}_n(\eta_n) - \left(1-\frac{a_n\eta_n^2}{2}-\frac{\eta_n^3}{6}\right)  \right\vert 
\leq \frac{4(\eta_{a_n})^3}{L_n^3} \left\vert \log\left(  1-\frac{\eta_n}{\eta_{a_n}}\right)  \right\vert \,,
\end{equation}
\begin{align}
\left\vert \frac{d\bar{\Phi}_n (\eta_n) }{ d\eta_n} + a_n\eta_n + \frac{\eta_n^2}{2} \right\vert  
&  \leq \frac{4 (\eta_{a_n})^{3} }{ L_n^3} \frac{1}{ (\eta_{a_n} - \eta_n)  }\,,\label{W2E2}\\
\left\vert \frac{d^2\bar{\Phi}_n (\eta_n)}{d\eta_n^2} + a_n + \eta_n \right\vert  
&  \leq\frac{4 (\eta_{a_n} )^{3}}{L_n^3} \frac{1}{ (\eta_{a_n}-\eta_n)^{2}} \,,\label{W2E3}
\end{align}
hold as long as:
\begin{equation}\label{W2E3a}
\frac{(\eta_{a_n}-\eta_n)}{\left\vert \log\left(1-\frac{\eta_n}{\eta_{a_n}}\right)  \right\vert }
\geq N \frac{\eta_{a_n}^4}{L_n^3}\,.
\end{equation}
\end{lemma}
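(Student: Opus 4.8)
The plan is to treat (\ref{W1E1}) as a small perturbation of the truncated equation $\bar{\Phi}_n'''+1=0$, whose solution with the data (\ref{Incond}) is exactly $P_{a_n}$, and to control the perturbation by a bootstrap on the region where $\bar{\Phi}_n$ is bounded below by a fixed fraction of the linear profile through $\eta_{a_n}$. Integrating (\ref{W1E1}) once, twice and three times against (\ref{Incond}) gives, for $\eta\geq 0$,
\[
\frac{d^2\bar{\Phi}_n}{d\eta^2}=-a_n-\eta+\frac{1}{L_n^3}\int_0^{\eta}\frac{ds}{(\bar{\Phi}_n(s))^3}\,,\qquad
\bar{\Phi}_n(\eta)=P_{a_n}(\eta)+\frac{1}{L_n^3}\int_0^{\eta}\!\!\int_0^{s_1}\!\!\int_0^{s_2}\frac{ds_3\,ds_2\,ds_1}{(\bar{\Phi}_n(s_3))^3}\,,
\]
together with the intermediate identity for $d\bar{\Phi}_n/d\eta$; by Lemma~\ref{global:existence1} the factor $\bar{\Phi}_n$ never vanishes for $\eta\geq 0$, so these identities are legitimate.

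The first ingredient is an elementary, $a_n$-uniform fact about the comparison polynomial: for $a_n\geq 0$, $P_{a_n}$ is strictly decreasing on $[0,\infty)$ (so that $\eta_{a_n}$ is its unique positive zero) and
\[
P_{a_n}(\eta)\ \geq\ \frac{\eta_{a_n}-\eta}{\eta_{a_n}}\qquad\text{for}\quad 0\leq\eta\leq\eta_{a_n}\,,
\]
since $P_{a_n}(\eta)-\left(1-\eta/\eta_{a_n}\right)=\eta\,q(\eta)$ with $q(\eta)=-\tfrac16\eta^2-\tfrac{a_n}{2}\eta+\tfrac1{\eta_{a_n}}$ nonincreasing on $[0,\infty)$ and $q(\eta_{a_n})=P_{a_n}(\eta_{a_n})/\eta_{a_n}=0$, hence $q\geq 0$ on $[0,\eta_{a_n}]$. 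The reason for expressing all the final bounds through $\eta_{a_n}$ is precisely that the dependence on $a_n$ then becomes invisible.

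The core step is this: on any interval $[0,\eta^{\ast}]$ on which the lower bound (\ref{W1E9}) holds, one has $(\bar{\Phi}_n(s))^{-3}\leq 8\eta_{a_n}^3(\eta_{a_n}-s)^{-3}$; substituting this into the three integral identities and evaluating the elementary one-, two- and three-fold integrals of $(\eta_{a_n}-s)^{-3}$ — which produce $(\eta_{a_n}-\eta)^{-2}$, $(\eta_{a_n}-\eta)^{-1}$ and $\left|\log(1-\eta/\eta_{a_n})\right|$ respectively — yields exactly (\ref{W2E3}), (\ref{W2E2}) and (\ref{W2E1}) on $[0,\eta^{\ast}]$ with the stated constant $4$ (from $\tfrac12\cdot 8$). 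Combining (\ref{W2E1}) with the polynomial bound above,
\[
\bar{\Phi}_n(\eta)\ \geq\ \frac{\eta_{a_n}-\eta}{\eta_{a_n}}-\frac{4\eta_{a_n}^3}{L_n^3}\left|\log\left(1-\frac{\eta}{\eta_{a_n}}\right)\right|\ \geq\ \left(1-\frac4N\right)\frac{\eta_{a_n}-\eta}{\eta_{a_n}}
\]
as long as (\ref{W2E3a}) holds, and for $N>8$ this is a strict improvement of (\ref{W1E9}).

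It remains to close the bootstrap. Putting $\psi(\eta):=(\eta_{a_n}-\eta)\big/\left|\log(1-\eta/\eta_{a_n})\right|$, the substitution $u=1-\eta/\eta_{a_n}$ reduces its monotonicity to that of $u\mapsto u/(-\log u)$ on $(0,1)$, so $\psi$ is strictly decreasing on $[0,\eta_{a_n})$ from $+\infty$ down to $0$; hence the set where (\ref{W2E3a}) holds is an interval $[0,\eta_1]$ with $0<\eta_1<\eta_{a_n}$, and (\ref{W1E9}) holds near $0$ because $\bar{\Phi}_n(0)=1>\tfrac12$. Let $\eta^{\ast}$ be the largest $\eta\in[0,\eta_1]$ for which (\ref{W1E9}) holds on $[0,\eta]$; if $\eta^{\ast}<\eta_1$ then the displayed strict improvement together with continuity of $\bar{\Phi}_n$ extends (\ref{W1E9}) slightly past $\eta^{\ast}$ inside $[0,\eta_1]$, contradicting maximality, so $\eta^{\ast}=\eta_1$ and the four estimates hold on the whole region (\ref{W2E3a}). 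The one genuinely delicate point is this continuity step — one must know that the set on which the a priori lower bound is available is exactly the interval cut out by (\ref{W2E3a}) — and it is the monotonicity of $\psi$ and the strict gap $1-4/N>\tfrac12$ that make it go through; everything else is bookkeeping of explicit integrals. (For $\eta\geq 0$ one can in fact shortcut the bootstrap entirely: comparing the two third-order equations gives $\bar{\Phi}_n\geq P_{a_n}$ directly, hence $\bar{\Phi}_n\geq(\eta_{a_n}-\eta)/\eta_{a_n}$ on $[0,\eta_{a_n})$ with no hypothesis at all; the bootstrap above is the more robust version, which also accommodates a range of negative $\eta$ and makes the role of (\ref{W2E3a}) transparent.)
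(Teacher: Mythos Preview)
Your proof is correct and follows essentially the same route as the paper: both derive the three integral identities from (\ref{W1E1})--(\ref{Incond}), both use the lower bound $P_{a_n}(\eta)\geq(\eta_{a_n}-\eta)/\eta_{a_n}$ (the paper invokes ``convexity'' of $P_{a_n}$, you give an explicit factorisation), and both close by a continuation argument on (\ref{W1E9}). Your write-up is in fact more careful than the paper's in two places---you make the bootstrap closure precise via the monotonicity of $\psi(\eta)=(\eta_{a_n}-\eta)/|\log(1-\eta/\eta_{a_n})|$, and you note the shortcut $\bar{\Phi}_n\geq P_{a_n}$ on $[0,\eta_{a_n})$ by direct comparison---but these are refinements of the same argument, not a different approach.
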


\begin{proof}
Direct integration of (\ref{W1E1}) with (\ref{Incond}) gives:
\begin{align}
\bar{\Phi}_n(\eta_n)  &  =1 - \frac{a_n\eta_n^2}{2} -\frac{\eta_n^3}{6} 
+ \frac{1}{L_n^3}\int_0^{\eta_n} \int_{0}^{s_{1}}\int_0^{s_2}\frac{1}{(\bar{\Phi}_n(s_3))^3}ds_3\,ds_2\, ds_1\,,\nonumber\\
\frac{d\bar{\Phi}_n(\eta_n)}{d\eta_n}  &  = - a_n\eta_n - \frac{\eta_n^2}{2} +
 \frac{1}{L_n^3}\int_0^{\eta_n}\int_0^{s_1} \frac{1}{ (\bar{\Phi}_n(s_2))^3}ds_2\,ds_1\,,\label{int} \\
\frac{d^2\bar{\Phi}_n(\eta_n)}{d\eta_n^2}  &  =
 -a_n-\eta_n +\frac{1}{L_n^3} \int_0^{\eta_n}\frac{ds_1}{(\bar{\Phi}_n (s_1))^3}\,. \nonumber
\end{align}
We now claim that (\ref{W1E9})-(\ref{W2E3}) hold if (\ref{W2E3a}) is satisfied for some $N>0$ independent of $L_n$ and of $a_n$. 
Indeed, this is proved by means of a continuation argument. The inequality (\ref{W1E9}) is satisfied for $\eta_n=0$. 
On the other hand, as long as this inequality is satisfied we have:
\[
\left\vert 
\bar{\Phi}_n (\eta_n)
-\left(  1-\frac{a_n \eta_n^2}{2}-\frac{\eta_n^3}{6}\right)  
\right\vert 
\leq \frac{8 (\eta_{a_n})^3}{L_n^3}\int_0^{\eta_n} \int_0^{s_1}
\int_0^{s_2}\frac{1}{ (\eta_{a_n}-s_3)^3} ds_3 \, ds_2\, ds_1 
\]
and hence (\ref{W2E1}) also follows. Due to the convexity of the polynomial $P_{a_n}(\eta)$ 
we have $P_{a_n}(\eta_n)\geq\frac{(\eta_{a_n}-\eta_n) }{\eta_{a_n}}$. It then follows that: 
\[
\bar{\Phi}_n(\eta_n)\geq\frac{(\eta_{a_n}-\eta_n) }{\eta_{a_n}}-\frac{4( \eta_{a_n})^3}{L_n^3} 
\left| \log\left(  1-\frac{\eta_n}{\eta_{a_n}}\right)  \right|
\]
which implies (\ref{W1E9}). Moreover, using (\ref{int}), we obtain that (\ref{W2E1}), (\ref{W2E2}) and 
(\ref{W2E3}) hold as long as (\ref{W2E3a}) holds.
\end{proof}

It is now convenient to reformulate this result using a new set of variables, 
which is better suited for the study of the boundary layer where the term
$\frac{1}{L_n^3}\frac{1}{(\bar{\Phi}_n)^3}$ in (\ref{W1E1}) becomes relevant. Namely
we take:
\begin{equation}\label{defphi}
\bar{\Phi}_n (\eta_n) = \frac{(\eta_{a_n})^3}{L_n^3}\varphi_n (\zeta_n) \,,
\quad \eta_n-\eta_{a_n}=\frac{(\eta_{a_n})^{4}}{L_n^3}\zeta_n
\end{equation}
and (\ref{W1E1}) becomes
\begin{equation}\label{phin}
\frac{d^3\varphi_n}{d\zeta_n^3} + \frac{(\eta_{a_n})^9}{L_n^6}=\frac{1}{(\varphi_n)^3}
\end{equation}

The next step is to change into the variables which transform (\ref{SEphi0}) into a planar system of ODEs. 
We use the transformation (\ref{SEphi0-trans}) of Appendix~2, that now reads
\begin{equation}\label{W2E9}
\frac{d\varphi_n}{d\zeta_n}=\varphi_n^{-\frac{1}{3}}u_n\,,
\quad \frac{d^2\varphi_n}{d\zeta_n^2}=\varphi_n^{-\frac{5}{3}}v_n\,,
\quad z_n=\int_{-R_{M,n}}^{\zeta_n}\frac{ds}{(\varphi_n (s))^{\frac{4}{3}}}\,, 
\end{equation}
where
\begin{equation}\label{W2E8}
R_{M,n}=M\left\vert \log\left(\frac{\eta_{a_n}}{L_n}\right)  \right\vert\quad\mbox{for some}\quad M>1\,,
\end{equation}
(in this way, $\zeta_n$ very negative is in the matching region, see (\ref{W2E3a})). This transforms
(\ref{phin}) into the system
\begin{equation}\label{ODEn3}
\frac{d\varphi_n}{dz_n}=u_n\varphi_{n} \,,\ \frac{du_n}{dz_n}=v_n+\frac{1}{3}u_n^2\,,
\ \frac{dv_n}{dz_n}=1+\frac{5}{3}u_n\,, \ v_n-\frac{(\eta_{a_n})^{9}}{L_n^6}(\varphi_n)^3\,,
\end{equation}
that can be considered as a perturbation of the system (\ref{phi0system}) for $n$ large enough. 
We will use $u=u_n$ as independent variable and one should understand in the following, with some
abuse of notation, that $\varphi_n=\varphi_n (u)$, $v_n=v_n (u)$, $z_n=z_n (u)$, then
\begin{equation}\label{W3E2}
\frac{d\varphi_n}{du}=\frac{u\, \varphi_n}{v_n + \frac{1}{3} u^2}
\,,\quad  \frac{dv_n}{du}=\frac{1+\frac{5}{3}u\,v_n -\frac{(\eta_{a_n})^9}{L_n^6}(\varphi_n)^3}{v_n + \frac{1}{3} u^2}\,, 
\quad \frac{dz_n}{du}=\frac{1}{v_n + \frac{1}{3} u^2}\,.
\end{equation}
The following lemma is a translation of Lemma~\ref{Lemma1} into this system in the matching region.
\begin{lemma}\label{Lemma2}
\begin{enumerate}
\item Suppose that $(v_n + \frac{1}{3} u_n^2)$ is large. Then we can define $\varphi_n(u)$, 
$v_n(u)$, $z_n(u) $ by means of (\ref{W2E9}) with $u=u_n$. 

\item For any $\varepsilon>0$ arbitrarily small, there exist
$M_0=M_0(\varepsilon)$ and $n_0=n_0(\varepsilon,M)$ such that for $M\geq M_0$, $n\geq n_0$ we have:
\begin{align}
\left| \varphi_n (u^{\ast})  - K_n R_{M,n}\right|  &\leq \varepsilon R_{M,n} \,, 
\quad\left| v_n (u^{\ast})\right| \leq \varepsilon\,,\quad z_n=0 \label{W3E2a}\\
\mbox{for some} \quad u^{\ast} & \in\left[  -K_n^{\frac{4}{3}}(1+\varepsilon) (R_{M,n})^{\frac{1}{3}}, 
-K_n^{\frac{4}{3}}(1-\varepsilon)  (R_{M,n})^{\frac{1}{3}}\right] \nonumber
\end{align}
with
\[
K_n=\left(  a_n\eta_{a_n}^2+\frac{\eta_{a_n}^3}{2}\right)\,.
\]
\end{enumerate}
\end{lemma}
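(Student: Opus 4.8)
The plan is to obtain Lemma~\ref{Lemma2} as a mere translation of Lemma~\ref{Lemma1} through the two changes of variables (\ref{defphi}) and (\ref{W2E9}), evaluated at the single value $\zeta_n=-R_{M,n}$, which by the definition of $z_n$ in (\ref{W2E9}) is exactly the point where $z_n=0$. The one genuinely new ingredient is part (i): along any orbit of (\ref{ODEn3}) one has $\frac{du_n}{dz_n}=v_n+\tfrac{1}{3}u_n^{2}$, so wherever this quantity is large, hence strictly positive, $u_n$ is a strictly increasing and therefore invertible function of $z_n$, and the chain rule turns (\ref{ODEn3}) into (\ref{W3E2}). All the estimates below will be made uniform in $n$ by means of two a priori facts about $P_{a_n}$: since $a_n\geq 0$, the polynomial $P_{a_n}$ is strictly decreasing on $(0,\infty)$ with $P_{a_n}(0)=1$, so $\eta_{a_n}\in(0,6^{1/3}]$; and $P_{a_n}(\eta_{a_n})=0$ yields $\eta_{a_n}^{2}(\eta_{a_n}+3a_n)=6$, whence $K_n=\eta_{a_n}^{2}(a_n+\tfrac{1}{2}\eta_{a_n})=\frac{6a_n+3\eta_{a_n}}{3a_n+\eta_{a_n}}\in(2,3]$ uniformly in $a_n$, and in particular $a_n\eta_{a_n}^{2}\leq 3$ and $\eta_{a_n}^{5}(a_n+\eta_{a_n})\leq 54$.

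For part (ii) I would set $\zeta_n^{\ast}:=-R_{M,n}$ and let $\eta_n^{\ast}$ be the value of $\eta_n$ corresponding to it through (\ref{defphi}), so that $\eta_{a_n}-\eta_n^{\ast}=\frac{\eta_{a_n}^{4}}{L_n^{3}}R_{M,n}$ and $1-\eta_n^{\ast}/\eta_{a_n}=\frac{\eta_{a_n}^{3}}{L_n^{3}}R_{M,n}$. The first step is to check that $\eta_n^{\ast}$ lies in the range of validity of Lemma~\ref{Lemma1}: the left-hand side of (\ref{W2E3a}) is decreasing in $\eta_n$ on $[0,\eta_n^{\ast}]$, so it is enough to verify (\ref{W2E3a}) at $\eta_n^{\ast}$, where it reduces to $R_{M,n}/|\log(\eta_{a_n}^{3}R_{M,n}/L_n^{3})|\geq N$; writing $s:=|\log(\eta_{a_n}/L_n)|\to\infty$ the left-hand side equals $M/(3-\log(Ms)/s)\to M/3$, so the condition holds for $M\geq M_0$ and $n$ large. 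The second step is to evaluate (\ref{W2E1})--(\ref{W2E3}) at $\eta_n^{\ast}$ and Taylor-expand $P_{a_n}$ about its root, using $P_{a_n}'(\eta_{a_n})=-K_n/\eta_{a_n}$ and $P_{a_n}''(\eta_{a_n})=-(a_n+\eta_{a_n})$. Each error term of Lemma~\ref{Lemma1} then contributes a relative error $O(1/M)$ together with a term carrying an extra factor $R_{M,n}/L_n^{3}\to0$ or $1/R_{M,n}\to0$, which gives, uniformly in $a_n$,
\[
\bar{\Phi}_n(\eta_n^{\ast})=\frac{K_n\eta_{a_n}^{3}}{L_n^{3}}R_{M,n}\,(1+o(1))\,,\qquad \frac{d\bar{\Phi}_n}{d\eta_n}(\eta_n^{\ast})=-\frac{K_n}{\eta_{a_n}}\,(1+o(1))\,,
\]
while (\ref{W2E3}) by itself only yields $\left|\frac{d^{2}\bar{\Phi}_n}{d\eta_n^{2}}(\eta_n^{\ast})\right|\leq a_n+\eta_{a_n}+\frac{4L_n^{3}}{\eta_{a_n}^{5}R_{M,n}^{2}}$.

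The last step is to transport these through (\ref{defphi}) and (\ref{W2E9}). Since $\bar{\Phi}_n>0$ on $[0,\eta_n^{\ast}]$ by (\ref{W1E9}), $\varphi_n$ and $z_n$ are well defined there, and one obtains $\varphi_n(\zeta_n^{\ast})=\frac{L_n^{3}}{\eta_{a_n}^{3}}\bar{\Phi}_n(\eta_n^{\ast})=K_nR_{M,n}(1+o(1))$, $\frac{d\varphi_n}{d\zeta_n}(\zeta_n^{\ast})=\eta_{a_n}\frac{d\bar{\Phi}_n}{d\eta_n}(\eta_n^{\ast})=-K_n(1+o(1))$, hence $u^{\ast}:=u_n(\zeta_n^{\ast})=\varphi_n^{1/3}\frac{d\varphi_n}{d\zeta_n}=-K_n^{4/3}R_{M,n}^{1/3}(1+o(1))$, which puts $u^{\ast}$ in the stated interval once $M\geq M_0(\varepsilon)$ and $n\geq n_0(\varepsilon,M)$; and $v_n(\zeta_n^{\ast})=\varphi_n^{5/3}\frac{d^{2}\varphi_n}{d\zeta_n^{2}}=L_n^{2}\,\bar{\Phi}_n(\eta_n^{\ast})^{5/3}\frac{d^{2}\bar{\Phi}_n}{d\eta_n^{2}}(\eta_n^{\ast})$, which by the bounds $\eta_{a_n}^{5}(a_n+\eta_{a_n})\leq 54$ and $K_n\leq 3$ is $O(R_{M,n}^{5/3}L_n^{-3})+O(R_{M,n}^{-1/3})\to0$. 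Finally $z_n=0$ at $\zeta_n^{\ast}$ by construction, so (\ref{W3E2a}) holds; and since $u^{\ast}$ is large and negative, $v_n+\tfrac{1}{3}u_n^{2}$ is large near $\zeta_n^{\ast}$, so part (i) genuinely applies there.

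The hard part is uniformity in $a_n$: at this point $a_n$ is not yet known to be bounded --- that is precisely what Theorem~\ref{mainoscillation} will establish --- so none of the $o(1)$'s above may hide a dependence on a bound for $a_n$. This is exactly what the two a priori identities $\eta_{a_n}\leq 6^{1/3}$ and $K_n\in(2,3]$ are for: once every estimate is rewritten in terms of $K_n$, of positive powers of $\eta_{a_n}$, and of $R_{M,n}$, all residual $a_n$-dependence is confined to $R_{M,n}=M|\log(\eta_{a_n}/L_n)|$, and any crude polynomial a priori bound $a_n\leq L_n^{C}$ --- immediate from a rough estimate of $\Phi$ near $\tau_n^{+}$ using $\Phi'''=1/\Phi^{3}-1\leq 0$ while $\Phi\geq 1$ --- together with $\eta_{a_n}^{2}=2/(a_n+\tfrac{1}{3}\eta_{a_n})$ forces $R_{M,n}=O(\log L_n)$, so that $R_{M,n}^{5/3}L_n^{-3}\to0$ along the fixed solution. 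A related subtlety is that Lemma~\ref{Lemma1} controls $\frac{d^{2}\bar{\Phi}_n}{d\eta_n^{2}}$ near $\eta_n^{\ast}$ only weakly (its error $\sim L_n^{3}/(\eta_{a_n}^{5}R_{M,n}^{2})$ is in fact large), but this is harmless after multiplication by $\varphi_n^{5/3}$ precisely because $\varphi_n\sim K_nR_{M,n}$ is only logarithmically large in $L_n$.
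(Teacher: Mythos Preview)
Your proof is correct and follows essentially the same route as the paper: translate Lemma~\ref{Lemma1} through (\ref{defphi}) and (\ref{W2E9}) at $\zeta_n=-R_{M,n}$, invoking the implicit function theorem via $\frac{du_n}{dz_n}=v_n+\tfrac13u_n^2$ for part~(i). Your explicit identities $\eta_{a_n}\le 6^{1/3}$ and $K_n\in(2,3]$ are exactly what the paper has in mind when it writes ``$a_n\eta_{a_n}^2$ is bounded''.

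Where you diverge is the last paragraph, and it is an unnecessary detour. When you bound $\eta_{a_n}^{5}(a_n+\eta_{a_n})\le 54$ you throw away a factor of $\eta_{a_n}^{3}$; keeping it gives
\[
|v_n(u^{\ast})|\;\le\; C\Bigl(\tfrac{\eta_{a_n}}{L_n}\Bigr)^{3}R_{M,n}^{5/3}\;+\;C\,R_{M,n}^{-1/3},
\]
and since $R_{M,n}=M\bigl|\log(\eta_{a_n}/L_n)\bigr|$, both terms tend to zero from $\eta_{a_n}/L_n\to 0$ alone (the first is $x^{3}|\log x|^{5/3}\to 0$). No a~priori control on $a_n$ is required, which is precisely how the paper closes the estimate.

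Separately, your sketched justification of $a_n\le L_n^{C}$ via ``$\Phi'''\le 0$ while $\Phi\ge 1$'' does not work as written: that inequality only gives $\Phi''(\tau)\ge\Phi''(\tau_n^{+})$ for $\tau<\tau_n^{+}$, a lower bound on $\Phi''$ that says nothing about the size of $a_n$. If you do want an a~priori bound, use instead $\Phi'''\ge -1$ everywhere: integrating three times backward from $\tau_n^{+}$ yields $\Phi(\tau_n^{+}-r)\le L_n-\tfrac{a_n L_n^{1/3}}{2}r^{2}+\tfrac{1}{6}r^{3}$, and taking $r=a_nL_n^{1/3}$ together with $\Phi>0$ forces $a_n^{3}<3$. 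So in fact $a_n$ is uniformly bounded from the outset --- but, as noted, you do not need this here.
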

\begin{proof}
The possibility of defining the functions $\varphi_n (u) $, $v_n (u)$, $z_n (u)$ is just a consequence of 
the second equation in (\ref{ODEn3}) and the Implicit Function Theorem. Using Lemma~\ref{Lemma1} we obtain:
\[
\left\vert \varphi_n + K_n \zeta_n \right\vert 
\leq\varepsilon \left\vert\zeta_n\right\vert \,,\quad \left\vert \frac{d\varphi_n}{d\zeta_n}+ K_n \right\vert 
\leq C (a_n \eta_{a_n}^2 )  \left(\frac{\eta_{a_n}}{L_n}\right)^3 \left\vert \zeta_n\right\vert
+\frac{4}{\left\vert \zeta_n \right\vert }
\]
and
\[
\left\vert \frac{d^2 \varphi_n}{d\zeta_n^2} + \left(  \frac{a_n\eta_{a_n}^5}{ L_n^3} + \frac{\eta_{a_n}^6}{L_n^3}\right)\right\vert 
\leq C \frac{\eta_{a_n}^9}{L_n^6}\left\vert \zeta_n \right\vert +\frac{4}{\left\vert \zeta_n\right\vert^2}
\]
as long as $n$ is sufficiently large and 
\[
C \frac{L_n^3}{\eta_{a_n}^4}>\left\vert \zeta_n \right\vert \geq M\left\vert \log\left(  \frac{\eta_{a_n}}{L_n}\right)  \right\vert \,,
\]
where $C>0$ is independent of $n$ (the inequality on the left-hand side is used in the first and second inequalities). 
Moreover, using that the sequence $a_n\eta_{a_n}^{2}$ is bounded, as well as the fact that $\eta_{a_n}/L_n\to 0$ as $n\to\infty$, we obtain that at $\zeta_n = -R_{M,n}$ 
\[
\left\vert \varphi_n - K_n R_{M,n}\right\vert \leq\varepsilon R_{M,n}\,, 
\quad \left\vert \frac{d\varphi_n}{d\zeta_n} + K_n \right\vert \leq \varepsilon  \quad \mbox{and}
\quad  \left\vert \frac{d^2\varphi_n}{d\zeta_n^2}\right\vert \leq \frac{C\eta_{a_n}^3}{ L_n^3} + \frac{4}{R_{M,n}^2}
\]
hold, where $\varepsilon$ can be made arbitrarily small if we choose $M$ large enough and $n\geq n_0(M)$ sufficiently large too.
Thus, the functions $(\varphi_n,u_n,v_n)$ satisfy
\begin{equation}\label{W3E1}
\left\vert \varphi_n - K_n R_{M,n}\right\vert \leq\varepsilon R_{M,n}\,,
\quad  \left\vert u_n + K_n^{\frac{4}{3}} (R_{M,n})^{\frac{1}{3}}\right\vert 
\leq\varepsilon (R_{M,n})^{\frac{1}{3}} \,, \quad \left\vert v_n \right\vert 
\leq\varepsilon \quad \mbox{at} \ \zeta_n=-R_{M,n}
\end{equation} 
where $\varepsilon$ can be made arbitrarily small if $M$ and $n$ are large enough. 
Defining $u^{\ast}$ as the value of $u_n$ at which $\zeta_n=-R_{M,n}$ we obtain (\ref{W3E2a}).
\end{proof}

We now need to prove a stability lemma for the system (\ref{W3E2}). The 
behaviour of $(\varphi_n, u_n, v_n)$ is expected to be 
similar to that of the separatrix parametrised by $\left\{(u,v): \ v=\bar{v}(u)  \right\}$ 
of Lemma~\ref{separa} of Appendix~2 and that gives the linear and parabolic behaviours in terms of $\varphi$, 
see Theorem~\ref{matchsolution}. Before we go into such analysis, we prove some results for the following auxiliary functions.

\begin{lemma}\label{GH}
Let us define functions $G$ and $H$ by means of
\begin{equation}\label{W3E9}
G(u)  = \exp \left(\int_0^u\frac{s}{\bar{v}(s)  + \frac{s^{2}}{3}} ds \right)  \,,
\quad  
H(u) = \exp \left( \int_{0}^{u}\frac{\left(-1 + \frac{5s^{3}}{9} \right)}{\left(\bar{v}(s) + \frac{s^{2}}{3} \right)^{2}}ds\right) \,.
\end{equation}
Then, the following asymptotics hold
\begin{align}
G(u)   &  = K_{+}(u)^{\frac{6}{5}}\left(1+O\left(  \frac{1}{u^{\varepsilon_{0}}}\right)  \right)  
\quad \mbox{as}\quad u\to\infty \quad  \mbox{and}\label{A1E1}\\
G(u)   &  = K_{-}\left\vert u\right\vert ^{3}\left(  1+O\left(\frac{1}{\left\vert u\right\vert ^{\varepsilon_{0}}}\right)  \right)
\quad \mbox{as} \quad u\to -\infty\,, \label{A1E2}
\end{align}
with $\varepsilon_{0}>0$ and where $K_+$, $K_->0$. We have also that $G(u)>0$ for all 
$u\in\mathbb{R}$ and we can write the following global estimate:
\begin{equation}\label{W4E3}
\frac{1}{C_{\ast}}\left(  1+\left\vert u\right\vert^{\beta(u)}\right)  
\leq G(u)  \leq 
C_{\ast}\left(  1+\left\vert u\right\vert^{\beta(u)  }\right) 
\,, \quad 
\beta(u)=\frac{6}{5}\quad \mbox{if}\ u>0 \,,\quad  \beta(u)=3\quad \mbox{if}\ u<0 
\end{equation}
where $C_{\ast}>1$. Moreover,
\begin{align*}
H(u)   &  =\bar{K}_{+}(u)^{\frac{4}{5}}\left(1+O\left(  \frac{1}{u^{\varepsilon_{0}}}\right)  \right) 
\quad \mbox{as}\quad u\to\infty \quad \mbox{and}   \\
H(u)   &  =\bar{K}_{-}\left\vert u\right\vert^{5}
\left(1+O\left(  \frac{1}{\left\vert u\right\vert^{\varepsilon_{0}}}\right) \right)  
\quad \mbox{as}\quad u\to -\infty \,,
\end{align*}
with $\varepsilon_{0}>0$ and where $\bar{K}_+$, $\bar{K}_->0$. And, as above, there exists a 
$\bar{C}_{\ast}>1$ such that
\begin{equation}\label{W4E4}
\frac{1}{\bar{C}_{\ast}}\left(  1 + \left\vert u\right\vert^{\gamma\left(u\right)  }\right)  
\leq H(u)  \leq\bar{C}_{\ast} \left(1+\left\vert u\right\vert^{\gamma(u)} \right)  
\,,\quad \gamma(u)  =\frac{4}{5}\quad \mbox{if}\quad u>0 \,,\quad \gamma(u)=5\quad \mbox{if}\quad u<0 \,.
\end{equation}
\end{lemma}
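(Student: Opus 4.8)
The plan is to reduce the whole statement to the asymptotic description of the separatrix $v=\bar v(u)$ established in Appendix~2. The first point is that $G$ and $H$ are well defined and positive. Along the separatrix one has $\bar v(u)+\frac{u^2}{3}=\frac{du}{dz}$, and $u$ is a strictly monotone function of $z$ on the separatrix (this monotonicity is exactly what legitimates using $u$ as independent variable in (\ref{W3E2}), and $dz/d\zeta=\varphi^{-4/3}>0$). Hence $\bar v(u)+\frac{u^2}{3}$ is continuous and nowhere zero; since, by the asymptotics recalled below, it behaves like $\frac{u^2}{3}$ as $u\to-\infty$ and like $\frac{5u^2}{6}$ as $u\to+\infty$, it is in fact strictly positive for every $u\in\mathbb R$. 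Consequently the integrands in (\ref{W3E9}) are continuous on all of $\mathbb R$ — at $u=0$ the first one vanishes and the second equals $-1/\bar v(0)^2$ — so $G,H\in C^\infty(\mathbb R)$ and $G(u),H(u)>0$.

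Next I would transcribe the behaviour of the separatrix from Lemma~\ref{separa} and Theorem~\ref{matchsolution}: the linear regime of $\varphi$ corresponds to $u\to-\infty$, where $\bar v(u)\sim-\frac{1}{2u}$, and the parabolic regime to $u\to+\infty$, where $\bar v(u)\sim\frac{u^2}{2}$ (these can also be read off directly by inserting $\bar v\sim cu^2$, resp.\ $\bar v=o(u^2)$, into $\frac{dv}{du}=\frac{1+\frac53 uv}{v+u^2/3}$ and bootstrapping). In both cases one gets a power-rate correction, so that for some $\varepsilon_0>0$
\[
\bar v(u)+\frac{u^2}{3}=\frac{5u^2}{6}\bigl(1+O(u^{-\varepsilon_0})\bigr)\ \ (u\to+\infty),\qquad
\bar v(u)+\frac{u^2}{3}=\frac{u^2}{3}\bigl(1+O(|u|^{-\varepsilon_0})\bigr)\ \ (u\to-\infty).
\]
Inserting this into the definition of $G$ gives $\dfrac{s}{\bar v(s)+s^2/3}=\dfrac{6}{5s}\bigl(1+O(s^{-\varepsilon_0})\bigr)$ as $s\to+\infty$ and $=\dfrac{3}{s}\bigl(1+O(|s|^{-\varepsilon_0})\bigr)$ as $s\to-\infty$; splitting $\int_0^u=\int_0^{s_0}+\int_{s_0}^u$ with $s_0$ fixed and large, the first piece is a constant, on $[s_0,u]$ the leading part integrates to $\frac65\log(u/s_0)$ and the remainder $O(s^{-1-\varepsilon_0})$ converges as $u\to\infty$ up to an $O(u^{-\varepsilon_0})$ tail, and exponentiating yields (\ref{A1E1}) with $K_+>0$; the same at $u\to-\infty$ gives (\ref{A1E2}) with $K_->0$. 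The identical scheme applied to $H$, using that $-1+\frac{5s^3}{9}\sim\frac59 s^3$ as $s\to\pm\infty$, produces integrands $\sim\dfrac{\frac59 s^3}{(\frac56 s^2)^2}=\dfrac{4}{5s}$ (as $s\to+\infty$) and $\sim\dfrac{\frac59 s^3}{(\frac13 s^2)^2}=\dfrac{5}{s}$ (as $s\to-\infty$), hence $H(u)=\bar K_+u^{4/5}(1+O(u^{-\varepsilon_0}))$ and $H(u)=\bar K_-|u|^5(1+O(|u|^{-\varepsilon_0}))$ with $\bar K_\pm>0$.

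The global estimates (\ref{W4E3}) and (\ref{W4E4}) then follow by a compactness argument. The map $u\mapsto G(u)/(1+|u|^{\beta(u)})$ is continuous and strictly positive on $\mathbb R$ (at $u=0$ it equals $G(0)/2=\frac12$), and by the asymptotics just obtained it has the finite, strictly positive limits $K_-$ as $u\to-\infty$ and $K_+$ as $u\to+\infty$; hence it is bounded above and bounded below away from $0$, which is (\ref{W4E3}) for an appropriate $C_*>1$. The same reasoning with $H$ and $\gamma(u)$ gives (\ref{W4E4}).

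The only non-routine ingredient is the subleading bound on the separatrix, namely the rate $O(|u|^{-\varepsilon_0})$ in $\bar v(u)+\frac{u^2}{3}$ as $u\to\pm\infty$; everything else is elementary asymptotics of one-dimensional integrals plus the continuity argument. If Appendix~2 already records this rate (it should, as it is needed for the matching in Section~\ref{section:3.2}), the lemma is immediate; otherwise one derives it by the standard bootstrap on $\frac{dv}{du}=\frac{1+\frac53 uv}{v+u^2/3}$ indicated above — first $\bar v/u^2\to\frac12$ (resp.\ $\to 0$), then a power-rate correction obtained by feeding this back into the equation.
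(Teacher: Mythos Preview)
Your proposal is correct and follows exactly the approach the paper takes: its proof is the single sentence ``This result is just a consequence of the asymptotics of the function $\bar v=\bar v(u)$ obtained in Appendix~2, see Lemma~\ref{separa}, (\ref{W3E6c}),'' and you have simply spelled out that computation. Note that the rate you flag as the only non-routine point is indeed recorded there: (\ref{W3E6c}) gives $\bar v(u)=\tfrac{u^2}{2}+O(u^{4/5})$ as $u\to+\infty$, hence $\bar v+\tfrac{u^2}{3}=\tfrac{5u^2}{6}(1+O(u^{-6/5}))$, while at $-\infty$ already $\bar v\sim-\tfrac{1}{2u}$ yields $\bar v+\tfrac{u^2}{3}=\tfrac{u^2}{3}(1+O(|u|^{-3}))$, so no bootstrap is needed.
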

\begin{proof}
This result is just a consequence of the asymptotics of the function $\bar{v}=\bar{v}(u)$ 
obtained in the Appendix~2, see Lemma~\ref{separa}, (\ref{W3E6c}).
\end{proof}

\begin{lemma}\label{BL}
Suppose that $\varphi_n (u)$ and $v_n(u)$ are defined as in Lemma \ref{Lemma2}. Then, for any $\varepsilon>0$ there
 exists $M_0=M_0(\varepsilon)$ and $n_0=n_0(\varepsilon,M)$ such that, for $M\geq M_0$ and $n\geq n_0$ the following inequalities hold:
\begin{equation}\label{W3E7}
\left\vert (v_n -\bar{v})(u)  \right\vert
\leq \frac{C_1 \varepsilon  }{\left(  1 + \left\vert u^{\ast}\right\vert^{5}\right)  } H(u)\,,
\end{equation}
with $C_1>0$ large enough and independent of $n$, $\varepsilon$ and $M$, and
\begin{equation}\label{W3E8}
\left\vert \frac{\varphi_n (u)}{\varphi_n (u^{\ast}) } - \frac{G(u)}{G(u^{\ast})}\right\vert 
\leq\frac{C\, C_1 \varepsilon}{\left(  1+\left\vert u^{\ast}\right\vert ^{2}\right)  }\frac{G(u)}{G(u^{\ast})}
\end{equation}
for $u\in\left[ u^{\ast},u^{\ast\ast}\right]$ with $u^{\ast\ast}=\frac{1}{2}\frac{L_n^3}{\eta_{a_n}^5}$ and $u^{\ast}$ is as in Lemma \ref{Lemma2} and $C>0$ is 
independent of $\varepsilon$, $n$, $M$ and $C_1$.
\end{lemma}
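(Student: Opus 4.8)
The plan is to view the system (\ref{W3E2}) as a perturbation of the linearisation of the planar system (\ref{phi0system}) about the separatrix $\{v=\bar v(u)\}$ of Lemma~\ref{separa}, with $G$ and $H$ of Lemma~\ref{GH} playing the role of integrating factors, and then to close the two estimates by a continuation (bootstrap) argument on $[u^{\ast},u^{\ast\ast}]$ for $n$ large. The starting remark is that $G$ and $H$ solve exactly the relevant linearised equations: writing $F(u,v)=(1+\tfrac53 uv)/(v+\tfrac13 u^{2})$ one has $(\log G)'(u)=u/(\bar v(u)+\tfrac13 u^{2})$ and $(\log H)'(u)=\partial_v F(u,\bar v(u))=(\tfrac59 u^{3}-1)/(\bar v(u)+\tfrac13 u^{2})^{2}$, which is precisely the coefficient one obtains on linearising the $v$-equation of (\ref{W3E2}) about $\bar v$.

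First I would derive integral representations. Set $\omega:=v_n-\bar v$. Subtracting $\bar v'=F(u,\bar v)$ from the second equation of (\ref{W3E2}) gives $\omega'=\partial_v F(u,\bar v)\,\omega+Q_1-Q_2$, where $Q_1:=F(u,v_n)-F(u,\bar v)-\partial_v F(u,\bar v)\,\omega$ is quadratically small in $\omega$ and $Q_2:=\tfrac{(\eta_{a_n})^{9}}{L_n^{6}}\,\varphi_n^{3}/(v_n+\tfrac13 u^{2})$ is the genuine perturbation; hence
\[
\omega(u)=H(u)\Big[\frac{\omega(u^{\ast})}{H(u^{\ast})}+\int_{u^{\ast}}^{u}\frac{Q_1(s)-Q_2(s)}{H(s)}\,ds\Big].
\]
Likewise, writing $u/(v_n+\tfrac13 u^{2})=(\log G)'(u)+r(u)$ with $r=-u\,\omega/[(v_n+\tfrac13 u^{2})(\bar v+\tfrac13 u^{2})]$, the first equation of (\ref{W3E2}) integrates to $\varphi_n(u)=\varphi_n(u^{\ast})\,\tfrac{G(u)}{G(u^{\ast})}\exp(\int_{u^{\ast}}^{u}r)$, so that $\tfrac{\varphi_n(u)}{\varphi_n(u^{\ast})}-\tfrac{G(u)}{G(u^{\ast})}=\tfrac{G(u)}{G(u^{\ast})}\big(e^{\int_{u^{\ast}}^{u}r}-1\big)$ and it suffices to bound $\int_{u^{\ast}}^{u}|r|$ by a constant times $\varepsilon/(1+|u^{\ast}|^{2})$.

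Then I would run the continuation argument: fix $C_1$ large (depending only on universal constants) and let $\bar u\in(u^{\ast},u^{\ast\ast}]$ be maximal such that on $[u^{\ast},\bar u]$ one has $v_n+\tfrac13 u^{2}\ge\tfrac12(\bar v+\tfrac13 u^{2})>0$ and (\ref{W3E7}), (\ref{W3E8}) with $2C_1$ in place of $C_1$. On this interval the bound on $\omega$ controls $Q_1$ (quadratic, so it regains a factor $\varepsilon$) and $r$; Lemma~\ref{Lemma2} gives $|\omega(u^{\ast})|\le\varepsilon+|\bar v(u^{\ast})|$ while $H(u^{\ast})\sim\bar K_-|u^{\ast}|^{5}$ and $|\bar v(u^{\ast})|\to0$ by Lemma~\ref{GH} and the asymptotics (\ref{W3E6c}), so the boundary term is $\le C'\varepsilon H(u)/|u^{\ast}|^{5}$; and the two-sided estimates (\ref{W4E3}), (\ref{W4E4}), combined with $\varphi_n\sim\varphi_n(u^{\ast})G/G(u^{\ast})$ and $\varphi_n(u^{\ast})\sim K_nR_{M,n}\sim(\mathrm{const})\,G(u^{\ast})/K_n^{3}$, reduce $\int_{u^{\ast}}^{u^{\ast\ast}}|Q_2|/H$ to an explicit power-law integral which evaluates to $O(L_n^{-3/5}/K_n^{9})$, uniformly in $a_n$. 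Since $K_n$ is bounded below and above (and $R_{M,n}\sim M\log L_n$), for $n\ge n_0(\varepsilon,M)$ this is $\le C_1\varepsilon/(4|u^{\ast}|^{5})$; collecting the three contributions yields (\ref{W3E7}) on $[u^{\ast},\bar u]$ with constant $C_1$ rather than $2C_1$, and similarly $\int|r|\lesssim C_1\varepsilon|u^{\ast}|^{3}/|u^{\ast}|^{5}$ gives (\ref{W3E8}) with a constant $C$ independent of $C_1$. Hence $\bar u=u^{\ast\ast}$.

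The main obstacle is the perturbation term $Q_2$: $\varphi_n$ grows along $[u^{\ast},u^{\ast\ast}]$ like $G$, i.e. eventually like $u^{6/5}$, and $u^{\ast\ast}=\tfrac12 L_n^{3}/\eta_{a_n}^{5}$ is chosen exactly at the threshold where $\tfrac{(\eta_{a_n})^{9}}{L_n^{6}}\varphi_n^{3}$ becomes $O(1)$, so $Q_2$ is not pointwise small near the right endpoint; one must show that, once weighted by $1/H(s)$ and integrated, it is still $o(1)$, which requires the sharp algebraic asymptotics of $G$ and $H$ from Lemma~\ref{GH} rather than crude bounds, and keeping every estimate uniform in $a_n$ (equivalently in $\eta_{a_n}$ over its bounded range) — the reason the scalings (\ref{W2E9}), (\ref{W2E8}), (\ref{defphi}) carry explicit $\eta_{a_n}$-weights. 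A secondary but necessary point, handled as part of the bootstrap, is verifying that the denominators $v_n+\tfrac13 u^{2}$ stay bounded away from zero (so $u$ remains a legitimate independent variable), which follows from $|\omega|$ being small relative to $\bar v+\tfrac13 u^{2}$ together with positivity of $\bar v+\tfrac13 u^{2}$ on the separatrix.
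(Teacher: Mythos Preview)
Your proposal is correct and follows essentially the same route as the paper: linearise the $v$-equation of (\ref{W3E2}) about the separatrix $\bar v$, use $H$ as the integrating factor to obtain the variation-of-constants representation for $\omega=v_n-\bar v$, and then close (\ref{W3E7})--(\ref{W3E8}) by a continuation argument in which the boundary contribution is handled via Lemma~\ref{Lemma2} and the asymptotics (\ref{W4E4}), the quadratic remainder $Q_1$ regains a factor $\varepsilon$, and the perturbation $Q_2$ is shown to contribute $O(L_n^{-3/5})$ after weighting by $1/H$ and integrating up to $u^{\ast\ast}$. The paper organises the argument slightly differently (it first deduces (\ref{W3E8}) from (\ref{W3E7}) via (\ref{W3E4a}) and Taylor, then proves (\ref{W3E7}) alone by continuation, and keeps the $Q_2$-contribution as $H(u)(1+(u)_+^{9/5})$ rather than integrating it out), but the substance and the key power-counting are identical.
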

\begin{proof}
Using (\ref{W3E2}) and (\ref{W3E3}) we obtain
\begin{equation}\label{W3E4a}
\varphi_n (u)  =\varphi_n (u^{\ast})  \exp\left(\int_{u^{\ast}}^u \frac{s}{v_n (s) + \frac{s^2}{3}} ds\right)
\end{equation}
and
\[
\frac{d (v_n - \bar{v})}{du}= \frac{\left( -1 + \frac{5 u^3}{9}\right) (v_n - \bar{v})}{\left(v_n + \frac{u^2}{3}\right) \left(\bar{v}+\frac{u^2}{3}\right)}
-\frac{(\eta_{a_n})^9}{L_n^6}\frac{(\varphi_n)^3}{\left(  v_n + \frac{u^2}{3} \right)}\,.
\]
Then, (\ref{W3E8}) follows from (\ref{W3E4a}) and Lemma~\ref{GH} (\ref{W4E4}) (using Taylor) as long as (\ref{W3E7}) is satisfied.

Let us now prove (\ref{W3E7}). First observe that we can write 
\begin{equation}\label{W3E5}
\frac{d (v_n -\bar{v})}{du} = 
\frac{\left( -1 + \frac{5 u^3}{9}\right) (v_n - \bar{v}) }{ \left( \bar{v}+\frac{u^2}{3}\right)^2 } + f(u) 
\end{equation}
with
\[
f(u)=     \frac{\left( -1 + \frac{5 u^3}{9}\right) (v_n - \bar{v}) }{ \left( \bar{v}+\frac{u^2}{3}\right) }
\left( \frac{1}{ v_n + \frac{u^2}{3} } -\frac{1}{ \bar{v} + \frac{u^2}{3} } \right)  
  - \frac{(\eta_{a_n})^9}{L_n^6} \frac{(\varphi_n)^3}{ \left(  v_n+\frac{u^{2}}{3}\right) }\,.
\]
Thus, integrating (\ref{W3E5}) we obtain
\begin{equation}\label{W3E6a}
| (v_n -\bar{v})(u)| \leq 
2\varepsilon 
\exp\left(  \int_{u^{\ast}}^u 
\frac{ \left( -1 + \frac{5 s^3}{9}\right)}{\left(\bar{v}(s) + \frac{s^2}{3}\right)^2} ds\right)  +
\int_{u^{\ast}}^u 
\exp\left( \int_{\sigma}^u 
\frac{\left( -1 + \frac{5 s^3}{9} \right) }{\left(\bar{v}(s) + \frac{s^2}{3} \right)^2} ds \right)  
f(\sigma)  d\sigma\,.
\end{equation}
Also, using (\ref{W3E9}) and (\ref{W4E4}), we have
\begin{equation} \label{W3E6b}
\exp\left(  \int_{u^{\ast}}^u \frac{\left( -1 + \frac{5 s^3}{9}\right) }{\left(  \bar{v}(s) 
 + \frac{s^2}{3}\right)^2}  ds \right) 
\leq \frac{\bar{C}_{\ast} H(u)  }{\left(  1 + \left\vert u^{\ast} \right\vert^5 \right)  } \,.
\end{equation}

Notice that (\ref{W3E7}) holds for $u=u^{\ast}$ because Lemma~\ref{Lemma2} (\ref{W3E2a}) implies that $|v_n-\bar{v}| \leq\varepsilon$ at $u=u^{\ast}$ for $n$ sufficiently large. 
Therefore, we can extend it to some interval contained in $u>u^{\ast}$ by means of a continuation
argument. In particular, if (\ref{W3E7}) is satisfied then also 
\[
|v_n - \bar{v}| \leq\frac{1}{2} \left(  \bar{v}+\frac{u^2}{3}\right) 
\]
is satisfied, and hence 
\[
|f(u)| \leq\frac{2(\eta_{a_n})^9}{L_n^6} \frac{(\varphi_n)^3}{ \left(  \bar{v}+\frac{u^{2}}{3}\right) }
+\frac{2 \left(  1 + \frac{5 |u|^3}{9}\right) (v_n-\bar{v})^2}{ \left(  \bar{v}+\frac{u^2}{3}\right)^3 }\,.
\]
Then, using (\ref{W3E8}) and as long as (\ref{W3E7}) holds we 
can further estimate $f(u)$ as 
\[
| f(u) | \leq\frac{2(\eta_{a_n})^9}{L_n^6}    \frac{(\varphi_n(u^{\ast}))^3}{(G(u^{\ast}))^3}    \frac{(G(u))^3}{\left(\bar{v}+\frac{u^2}{3}\right)}
+2 C_1^2 \varepsilon^2  \frac{\left(  1+\frac{5 |u|^3}{9}\right)}{\left(\bar{v}+\frac{u^2}{3}\right)^3}
\frac{( H(u))^2}{\left(  1 + |u^{\ast}|^{10}\right)  }\,.
\]
Using this and (\ref{W3E6b}) in (\ref{W3E6a}) we obtain
\begin{align}
 |(v_n-\bar{v})(u)|  & \leq   \frac{2 \varepsilon  \bar{C}_{\ast} H(u)}{\left(  1+ |u^{\ast}|^5\right) }
+\frac{2(\eta_{a_n})^9}{L_n^6} \frac{(\varphi_n(u^{\ast}))^3}{(G(u^{\ast}))^3} 
\int_{u^{\ast}}^u \frac{H(u)}{H(\sigma)} \frac{(G(\sigma))^3}{\left(\bar{v}(\sigma)  
+\frac{\sigma^2}{3}\right)} d\sigma + \nonumber\\
&  + 2C_1^2 \varepsilon^2 \int_{u^{\ast}}^{u}\frac{H(u)}{H(\sigma)} 
\frac{\left(  1+\frac{5|\sigma|^3}{9}\right)}{\left( \bar{v}(\sigma)+\frac{\sigma^2}{3}\right)^3}
\frac{(H(\sigma))^2}{\left(1 + |u^{\ast}|^{10}\right) } d\sigma\,.\label{W4E2}
\end{align}
Now, using (\ref{W4E3}) and (\ref{W4E4}) in (\ref{W4E2}) gives 
\begin{align}
| (v_n -\bar{v})(u) |& \leq
 \frac{2C^{\ast}\varepsilon H(u)}{\left(  1+|u^{\ast}|^5\right)  }
+C\frac{(\eta_{a_n})^9}{L_n^6}\frac{(\varphi_n(u^{\ast}))^3}{(G(u^{\ast}))^3} H(u)\left(  1+| u^{\ast}|^3\right)
+ \nonumber\\&  
C\frac{(\eta_{a_n})^9}{L_n^6}\frac{(\varphi_n(u^{\ast}))^3}{(G(u^{\ast}))^3} H(u)\left(  1+ (u)_{+}^{\frac{9}{5}}\right)  
+ C\,C_1^2 \varepsilon^2 \frac{H(u)}{\left(  1+ |u^{\ast}|^{7}\right)}
\label{W4E5}
\end{align}
where $(u)_+=u$ if $u>0$ and $(u)_+=0$ otherwise. Notice that for $u<0$, the estimate (\ref{W4E5}) implies (\ref{W3E7}) if we
 assume that $C_1> 3 C^{\ast}$ and $n$ is large and $\varepsilon$ is small. In this case the second and third terms in the right-hand side can be absorbed 
in the first by noticing that $\frac{\eta_{a_n}}{L_n}$ behaves like $\exp(-B (u^{\ast})^3)$ for a positive constant $B$ and that $\varphi_n(u^{\ast})$ behaves like 
$(u^{\ast})^3$ (see Lemma~\ref{Lemma2}), so, in particular 
\[
\frac{\eta_{a_n}^9}{L_n^6} \frac{(\varphi_n(u^{\ast}))^3}{(G(u^{\ast}))^3} \ll \frac{\varepsilon}{1 + |u^{\ast}|^8}\ll \frac{\varepsilon}{1 + |u^{\ast}|^5}
\]
for $n$ large enough, and where we also use (\ref{W4E3}). Therefore, a continuation argument implies that (\ref{W3E7}) for $u<0$. 
In order to derive the range of values of $u>0$ we only need to see the range of values for which the third term in (\ref{W4E5}) can 
be absorbed into the first one, that is
\[
\frac{(\eta_{a_n}^9)}{L_n^6}(u)_+^\frac{9}{5}< C\frac{\varepsilon}{1+|u^\ast|^5} \quad \mbox{with} \quad C>0\,.
\]
Thus taking for example, $u<\frac{L_n^3}{\eta_{a_n}^5}$ for $n$ large enough the estimates holds (absorbing a factor $L_n^\frac{1}{3}$ in order 
to control $\varepsilon$ factor).
\end{proof}

In terms of the variable $u$ and the transformation (\ref{W2E9}), the balance that gives the maximum size of the next matching region
 corresponds to the one for which the second and third term in the equation for $\frac{dv_n}{du}$ in (\ref{W3E2}) are of the same order 
(that is $\frac{5}{3}u v_n\sim \frac{(\eta_{a_n})^9}{L_n^6} (\varphi_n)^3$), thus $u^\ast<u \ll \frac{L_n^{10}}{\eta_n^{15}}$ as $n\to \infty$. 
On the other hand $u^{\ast\ast}=\frac{1}{2} \frac{L_n^3}{(\eta_{a_n})^5} < \frac{L_n^{10}}{(\eta_{a_n})^{15}}$ for $n$ large enough.

As a next step we obtain approximations for the function $\bar{\Phi}_n(\eta_n)$ 
and the derivatives $\frac{d\bar{\Phi}_n(\eta_n)}{d\eta_n}$, $\frac{d^2\bar{\Phi}_n(\eta_n)}{d\eta_n^2}$ 
to the most right value of $\eta_n$ in the boundary layer region where the function 
$\bar{\Phi}_n$ is small. More precisely, this most right value of $\eta_n$ will be defined by means of 
$u^{\ast\ast}$.

\begin{lemma}\label{ApprDer}
Let $\xi_{\ast\ast}=\frac{2^{\frac{2}{5}} }{K_n} \left(\frac{K_+}{K_-}\right)^{\frac{4}{3}} (u^{\ast\ast})^{\frac{3}{5}}$. 
For any $\delta>0$, there exists $n_0=n_0(\delta)$ such that for $n\geq n_0 (\delta)$, there exist 
$\eta_n^{\ast\ast}\in\left[  \eta_{a_n} + \frac{(\eta_{a_n})^4 \xi_{\ast\ast}}{L_n^3}(1-\delta), 
\eta_{a_{n}}+\frac{(\eta_{a_n})^4 \xi_{\ast\ast}}{L_n^3}(1+\delta)  \right]$ 
such that:
\begin{align}
\left| \bar{\Phi}_n(\eta_n^{\ast\ast})-\frac{A_n L_n^3}{(\eta_{a_n})^5}\frac{ (\eta_n^{\ast\ast}-\eta_{a_n})^2}{4}\right|  
& \leq\frac{\delta L_n^3}{(\eta_{a_n})^5}\frac{ (\eta_n^{\ast\ast}-\eta_{a_n})^2}{4}\nonumber\\
\left| \frac{d\bar{\Phi}_n (\eta_n^{\ast\ast})}{d\eta_n}-\frac{A_n L_n^3 (\eta_n^{\ast\ast}-\eta_{a_n})  }{2(\eta_{a_n})^5}\right| 
 & \leq\frac{\delta L_n^3 (\eta_n^{\ast\ast}-\eta_{a_n})  }{(\eta_{a_n})^5} \nonumber\\
\left| \frac{d^2\bar{\Phi}_n(\eta_n^{\ast\ast})}{d\eta_n^2} -\frac{A_n L_n^3}{2(\eta_{a_n})^5}\right|  
& \leq\frac{\delta L_n^3}{(\eta_{a_n})^5} 
\label{W6E7}
\end{align}
where $A_n:=\left(\frac{K_-}{K_+}\right)^{\frac{5}{3}}K_n^5$.
\end{lemma}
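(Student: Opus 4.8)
The plan is to track the trajectory $(\varphi_n,u_n,v_n)$ of the perturbed planar system (\ref{ODEn3})--(\ref{W3E2}) from the matching point $u^{\ast}$ (where Lemma~\ref{Lemma2} gives us precise data) out to $u^{\ast\ast}=\tfrac12 L_n^3/(\eta_{a_n})^5$, use Lemma~\ref{BL} to show this trajectory stays close to the separatrix $\{v=\bar v(u)\}$ of Lemma~\ref{separa}, and then translate the resulting asymptotics for $\varphi_n$ and its $\zeta_n$-derivatives back into the $\eta_n$-variables via (\ref{defphi}), (\ref{W2E9}). First I would invoke Lemma~\ref{BL}: for $u\in[u^{\ast},u^{\ast\ast}]$ we have $|v_n-\bar v|\le C_1\varepsilon(1+|u^{\ast}|^5)^{-1}H(u)$ and $|\varphi_n(u)/\varphi_n(u^{\ast})-G(u)/G(u^{\ast})|\le CC_1\varepsilon(1+|u^{\ast}|^2)^{-1}G(u)/G(u^{\ast})$, so up to relative error $O(\varepsilon)$ we may replace $v_n$ by $\bar v$ and $\varphi_n(u)$ by $\varphi_n(u^{\ast})G(u)/G(u^{\ast})$ throughout the range $u\le u^{\ast\ast}$.

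The heart of the computation is evaluating everything at (or near) $u=u^{\ast\ast}$, which is large and positive. There I would use the asymptotics of Lemma~\ref{GH}: $G(u^{\ast\ast})\sim K_+(u^{\ast\ast})^{6/5}$, and from (\ref{W3E2a}) with $u^{\ast}$ large and negative, $\varphi_n(u^{\ast})\approx K_n R_{M,n}$ and $G(u^{\ast})\approx K_-|u^{\ast}|^3\approx K_- K_n^4 R_{M,n}$. Combining, $\varphi_n(u^{\ast\ast})\approx K_n R_{M,n}\cdot K_+(u^{\ast\ast})^{6/5}/(K_-K_n^4R_{M,n}) = (K_+/K_-)K_n^{-3}(u^{\ast\ast})^{6/5}$. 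Meanwhile the $\zeta_n$-variable at $u^{\ast\ast}$ is obtained from the third relation in (\ref{W2E9}): $\zeta_n^{\ast\ast}+R_{M,n}=\int_{u^{\ast}}^{u^{\ast\ast}} dz_n = \int \frac{du}{(v_n+u^2/3)}\approx\int\frac{du}{(\bar v(u)+u^2/3)}$, and by Lemma~\ref{GH}-type asymptotics (the integrand $\sim G'/G$ up to constants, using $\bar v+u^2/3\sim$ const$\cdot u^{4/5}$ for large $u$) this integral is dominated by its upper limit and gives $\zeta_n^{\ast\ast}\sim c\,(u^{\ast\ast})^{1/5}$ for an explicit constant $c$; matching the constants to the claimed $\xi_{\ast\ast}=2^{2/5}K_n^{-1}(K_+/K_-)^{4/3}(u^{\ast\ast})^{3/5}$ (noting $\eta_n^{\ast\ast}-\eta_{a_n}=(\eta_{a_n})^4\zeta_n^{\ast\ast}/L_n^3$) is the bookkeeping step. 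One then checks $\bar\Phi_n(\eta_n^{\ast\ast})=(\eta_{a_n})^3 L_n^{-3}\varphi_n(u^{\ast\ast})$ reduces, after substituting the above, to $\tfrac14 A_n L_n^3(\eta_{a_n})^{-5}(\eta_n^{\ast\ast}-\eta_{a_n})^2(1+O(\varepsilon))$ with $A_n=(K_-/K_+)^{5/3}K_n^5$; here one uses that the separatrix $\bar v(u)$ produces exactly the parabolic behaviour $\varphi\sim\Gamma\zeta^2$ near $u=+\infty$ of Theorem~\ref{matchsolution}, so $\varphi_n(u^{\ast\ast})\approx \Gamma_n(\zeta_n^{\ast\ast})^2$ with $\Gamma_n$ related to $A_n$, $K_n$ by the algebra. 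The derivative estimates come from $\tfrac{d\bar\Phi_n}{d\eta_n}=(\eta_{a_n})^{-1}\varphi_n^{-1/3}u_n$ and $\tfrac{d^2\bar\Phi_n}{d\eta_n^2}=\tfrac{L_n^3}{(\eta_{a_n})^5}\varphi_n^{-5/3}v_n$ — wait, more precisely from differentiating (\ref{defphi}) and using (\ref{W2E9}): $\tfrac{d\bar\Phi_n}{d\eta_n}=\tfrac{(\eta_{a_n})^{-1}}{1}\varphi_n^{-1/3}u_n$ and the second derivative analogously, then substituting $u_n(u^{\ast\ast})=u^{\ast\ast}$, $v_n(u^{\ast\ast})\approx\bar v(u^{\ast\ast})$, and the above value of $\varphi_n(u^{\ast\ast})$. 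Since $\bar\Phi_n\sim\tfrac{A_n}{\cdots}(\eta_n-\eta_{a_n})^2$ is a parabola, $\tfrac{d}{d\eta_n}$ of it is $\tfrac{A_n L_n^3}{(\eta_{a_n})^5}(\eta_n^{\ast\ast}-\eta_{a_n})/2$ and $\tfrac{d^2}{d\eta_n^2}$ is $\tfrac{A_n L_n^3}{2(\eta_{a_n})^5}$, which is precisely (\ref{W6E7}).

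The main obstacle will be controlling the accumulated error in the passage from $u^{\ast}$ to $u^{\ast\ast}$: the range is enormous ($u^{\ast}\sim -c R_{M,n}^{1/3}$, $u^{\ast\ast}\sim L_n^3/(\eta_{a_n})^5$), and although Lemma~\ref{BL} gives the needed multiplicative control of $v_n-\bar v$ by $H(u)$, one must verify that the perturbation term $(\eta_{a_n})^9 L_n^{-6}(\varphi_n)^3$ in (\ref{W3E2}) truly remains negligible up to $u^{\ast\ast}$ and does not spoil the parabolic matching — this is exactly why $u^{\ast\ast}$ is taken to be $\tfrac12$ of the balance scale rather than at it, and the factor-$\tfrac12$ margin together with the remark after Lemma~\ref{BL} ($u^{\ast\ast}<L_n^{10}/(\eta_{a_n})^{15}$) is what makes it work. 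A secondary technical point is that $\eta_n^{\ast\ast}$ is not a single deterministic value but lies in a $\delta$-interval, because the map $u\mapsto\zeta_n(u)$ is only known asymptotically; this is handled by intermediate-value/continuity as in Lemma~\ref{Lemma2}, choosing $\eta_n^{\ast\ast}$ to be the $\eta_n$ at which $u_n=u^{\ast\ast}$, and all three estimates in (\ref{W6E7}) then follow simultaneously with a common $n_0(\delta)$ once $\varepsilon$ in Lemma~\ref{BL} is taken small relative to $\delta$.
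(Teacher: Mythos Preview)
Your overall strategy is the paper's: invoke Lemma~\ref{BL} to pin $(\varphi_n,v_n)$ to the separatrix on $[u^{\ast},u^{\ast\ast}]$, read off $\varphi_n(u^{\ast\ast})$ and $v_n(u^{\ast\ast})$ from the asymptotics of $G$ and $\bar v$ in Lemma~\ref{GH} and Lemma~\ref{separa}, convert to $\zeta_n$-derivatives via (\ref{W2E9}), and undo (\ref{defphi}). The estimates you write for $\varphi_n(u^{\ast\ast})\approx (K_+/K_-)K_n^{-3}(u^{\ast\ast})^{6/5}$ and for the two $\eta_n$-derivatives are correct and match the paper's (\ref{W5E5}).

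There is, however, a concrete error in the step that locates $\zeta_n^{\ast\ast}$ (equivalently $\eta_n^{\ast\ast}$). You write
\[
\zeta_n^{\ast\ast}+R_{M,n}=\int_{u^{\ast}}^{u^{\ast\ast}}\frac{du}{v_n+\tfrac{u^2}{3}},
\]
but the right-hand side is $z_n(u^{\ast\ast})$, not $\zeta_n^{\ast\ast}+R_{M,n}$. From (\ref{W2E9}) one has $d\zeta_n=\varphi_n^{4/3}\,dz_n$, so the correct identity is
\[
\zeta_n^{\ast\ast}+R_{M,n}=\int_{u^{\ast}}^{u^{\ast\ast}}\frac{\varphi_n^{4/3}(s)}{v_n(s)+\tfrac{s^2}{3}}\,ds,
\]
which is exactly what the paper uses. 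Your omission of $\varphi_n^{4/3}$ is what produces the wrong exponent $(u^{\ast\ast})^{1/5}$: since $\bar v(u)+\tfrac{u^2}{3}\sim\tfrac{5}{6}u^2$ (not $u^{4/5}$, as you state) and $\varphi_n^{4/3}\sim c\,u^{8/5}$ for large $u$, the correct integrand behaves like $c\,u^{-2/5}$ and the integral like $c\,(u^{\ast\ast})^{3/5}$, in agreement with the statement of the lemma. Without the $\varphi_n^{4/3}$ factor the integral is actually bounded (its integrand is $\sim u^{-2}$), so your formula cannot even yield a growing $\zeta_n^{\ast\ast}$. Once you restore this factor and use $\varphi_n(s)\approx\varphi_n(u^{\ast})G(s)/G(u^{\ast})$ from (\ref{W3E8}), the paper's computation (splitting the integral at $0$, cf.\ (\ref{W5E2})--(\ref{W5E3})) gives $\zeta_n^{\ast\ast}\approx \xi_{\ast\ast}$ with the stated constant, and the rest of your argument goes through.
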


\begin{remark}
The coefficient $\Gamma$ in Theorem~\ref{matchsolution} can be approximated from (\ref{W6E7}) for $n$ large enough as 
$\Gamma\sim A_n/4$. In particular this is consistent with relation $\Gamma\propto K^5$ (which follows by a scaling argument 
in (\ref{ecuacionminima}) and the behaviours (\ref{lin:match}) and (\ref{para:match}).
\end{remark}

\begin{proof}
Notice that we can assume that $z_n$ in (\ref{W2E9}), and then also $\zeta_n$ 
are functions of $u$ for $u^{\ast} < u < u^{\ast\ast}$. 
Combining (\ref{W2E9}), (\ref{W3E2}) and using also $\zeta_n=-R_{M,n}$ 
at $u=u^{\ast}$ we obtain:
\[
\zeta_n + R_{M,n}=\int_{u^{\ast}}^u \frac{\varphi_n^{\frac{4}{3}}(s)}{v_n(s) + \frac{u^2}{3}}ds\,.
\]
Clearly, from (\ref{W3E2a}) and (\ref{A1E2}), $\frac{\varphi_n (u^{\ast})}{G(u^{\ast})  }$ is uniformly bounded by some constant $C>0$. Then,
using Lemma~\ref{BL} (estimates (\ref{W3E7}) and (\ref{W3E8})) and (\ref{W3E6c}), 
\begin{align}
& \left| \zeta_n + R_{M,n} - \left(\frac{ \varphi_n (u^{\ast}) }{ G(u^{\ast}) } \right)^{\frac{4}{3}}
\int_{u^{\ast}}^u \frac{ \left(  G(s) \right)^{\frac{4}{3}} }{\bar{v}(s) + \frac{u^2}{3}}ds\right| \\
&  \leq\frac{C\, C_1 \varepsilon}{ (1 + |u^{\ast}|^2 )  }\int_{u^{\ast}}^u\frac{(G(s))^{\frac{4}{3}}}{1+s^2}ds 
+ \frac{C\,C_1 \varepsilon}{ ( 1 + |u^{\ast}|^5)  }\int_{u^{\ast}}^u\frac{( G(s))^{\frac{4}{3}}H(s)  }{(1+s^2)^2}ds
\quad  \mbox{for}\quad u < u^{\ast\ast}\,. \label{W4E8}
\end{align}
Using also the global estimates (\ref{W4E3}) and (\ref{W4E4}) of Lemma~\ref{GH}, we further get that
\begin{align}
& \left| \zeta_n + R_{M,n} - \left(  \frac{ \varphi_n(u^{\ast})  }{ G(u^{\ast}) } \right)^{\frac{4}{3}}
\int_{u^{\ast}}^0  \frac{ (G(s))^{\frac{4}{3}} }{ \bar{v}(s)  + \frac{u^2}{3} } ds
-\left(  \frac{\varphi_n (u^{\ast})  }{ G(u^{\ast})  }\right)^{\frac{4}{3}}
\int_0^u \frac{(G(s))^{\frac{4}{3}} }{ \bar{v}(s)  +\frac{u^2}{3} }ds\right| \nonumber\\
&  \leq C\, C_1\varepsilon \left( 1+ |u^{\ast}| \right) +\frac{C\, C_1 \varepsilon}{( 1+ |u^{\ast}|^2)}\left(1 + (u)_{+}^{\frac{3}{5}}\right)
\quad \mbox{for} \quad u < u^{\ast\ast} \label{W5E1}
\end{align}
where we have split the integral on the left hand side of (\ref{W4E8}).

Now, using (\ref{W3E2a}), (\ref{W3E1}), (\ref{A1E2}) and (\ref{W3E6c}), we obtain: 
\begin{equation}\label{W5E2}
\left| R_{M,n}-\left(\frac{\varphi_n (u^{\ast})}{G(u^{\ast})}\right)^{\frac{4}{3}}
\int_{u^{\ast}}^0 \frac{ (G(s))^{\frac{4}{3}}}{\bar{v}(s) + \frac{u^2}{3} }ds\right| \leq\delta R_{M,n}
\end{equation}
where $\delta>0$ can be made arbitrarily small if $\varepsilon$ is small and $n$ is large. 
On the other hand, using (\ref{W3E1}), (\ref{A1E1}) and (\ref{W3E6c}), we obtain: 
\begin{equation}\label{W5E3}
\left| \left(\frac{\varphi_n(u^{\ast})}{G(u^{\ast})}\right)^{\frac{4}{3}}
\int_0^u \frac{(G(s))^{\frac{4}{3}}}{\bar{v}(s)+\frac{u^2}{3}} ds
-\frac{2}{K_n^4}\left(  \frac{K_+}{K_-}\right)^{\frac{4}{3}}(u)^{\frac{3}{5}}\right| 
\leq\frac{\delta}{2} u^{\frac{3}{5}} \quad u>0\,,
\end{equation}
where $\delta$ can be made small choosing $\varepsilon$ small and $n$ large. 
Combining now (\ref{W5E1}), (\ref{W5E2}) and (\ref{W5E3}) we obtain, for small 
$\varepsilon$ and large $n$:
\[
\left| \zeta_n - \frac{2}{K_n^4}\left(  \frac{K_+}{K_-}\right)^{\frac{4}{3}}(u)^{\frac{3}{5}}\right| 
\leq\delta\left(R_{M,n} + (u)^{\frac{3}{5}}\right)  +C\, C_1 \varepsilon (1 + |u^{\ast}|)  
\quad \mbox{for} \quad u< |u^{\ast\ast}|\,.
\]
In particular, recalling that $u^{\ast\ast}\sim \exp{B (u^{\ast})^3}$ for some positive $B$, this gives
\begin{equation}\label{W5E7}
\left| \zeta_n-\frac{2}{K_n^4}\left(\frac{K_+}{K_-}\right)^{\frac{4}{3}}(u^{\ast\ast})^{\frac{3}{5}} \right| 
\leq\delta (u^{\ast\ast})^{\frac{3}{5}}
\end{equation}
with $\delta$ small if $\varepsilon$ is small and $n$ large.

On the other hand, using (\ref{W3E8}), (\ref{A1E1}), and similarly using 
(\ref{W3E6c}), (\ref{W4E4}) and (\ref{W3E7}), we obtain 
\begin{equation}\label{W5E5}
\left| \varphi_n (u^{\ast\ast})  -\frac{K_+}{K_-}\frac{(u^{\ast\ast})^{\frac{6}{5}}}{K_n^3}\right|
\leq \delta (u^{\ast\ast})^{\frac{6}{5}} \quad\mbox{and}\quad
\left| v_n (u^{\ast\ast})  - \frac{(u^{\ast\ast})^2}{2}\right| \leq \delta (u^{\ast\ast})^2\,,
\end{equation}
with $\delta$ small for $\varepsilon$ small and $n$ large. 
Translating these behaviours by means of (\ref{W2E9}), we finally obtain
\[
\left| \frac{d\varphi_n}{d\zeta_n} - \left(\frac{K_-}{K_+}\right)^{\frac{1}{3}} K_n (u^{\ast\ast})^{\frac{3}{5}} \right|
  \leq\delta (u^{\ast\ast})^{\frac{3}{5}}\,,\quad
\left| \frac{d^2\varphi_n}{d\zeta_n^2}-\frac{1}{2}\left(\frac{K_-}{K_+}\right)^{\frac{5}{3}}K_n^5\right| 
   \leq \delta\quad \mbox{at}\quad u=u^{\ast\ast}\,.
\]
These estimates, as well as (\ref{W5E5}), can be written in terms of the variable $\zeta_n$ by using (\ref{W5E7}), namely,
\[
\left| \varphi_n - K_n^5 \left(\frac{K_-}{K_+}\right)^{\frac{5}{3}} \frac{\zeta_n^2}{4}\right| 
\leq \delta \zeta_n^2\,,\quad \left|\frac{d\varphi_n}{d\zeta_n}-K_n^5\left(\frac{K_-}{K_+}\right)^{\frac{1}{3}}\frac{\zeta_n}{2}\right| 
\leq\delta\zeta_n\,,\quad \left| \frac{d^2\varphi_n}{d\zeta_n^2}-\frac{1}{2}\left(  \frac{K_-}{K_+}\right)^{\frac{5}{3}}K_n^5\right| 
\leq\delta
\]
at some $\zeta_n \in \left[  \frac{2(1 -\delta)}{K_n} \left(\frac{K_+}{K_-}\right)^{\frac{4}{3}} (u^{\ast\ast})^{\frac{3}{5}},\frac{2(1+\delta)}{K_n}\left(\frac{K_+}{K_-}\right)^{\frac{4}{3}}(u^{\ast\ast})^{\frac{3}{5}}\right]$. Observe that $K_n$ can be bounded above and below by 
constants or order one, independently on the choice of $a_n$. The result (\ref{W6E7}) follows now from (\ref{defphi}).
\end{proof}

We now approximate $\bar{\Phi}_n(\eta_n)$ for $\eta_n \geq\eta_n^{\ast\ast}$. 
To this end we define the following polynomials that approximate, as we shall see, the functions 
$\bar{\Phi}_n$ for $n$ large enough in the appropriate intervals. Thus we let
\[
Q_n(Y)=\left[\frac{A_n}{4} B_n^2 + \frac{A_n \,B_n}{2} Y + \frac{A_n}{4}Y^2 - \frac{Y^3}{6}\right]
\,,\quad B_n=\frac{\xi_{\ast\ast}}{L_n^6}(\eta_{a_n})^9\,, 
\]
where $A_n$ is as in Lemma~\ref{ApprDer}. The definition of $\xi_{\ast\ast}$ (cf. Lemma \ref{ApprDer}) as well as 
(\ref{W2E8}) and (\ref{W3E2a}) imply that $\lim_{n\to\infty}B_n=0$. The next result now follows:
\begin{lemma}\label{lastmatch} 
For $\delta>0$, there exists $n_0(\delta)$ such that for $n\geq n_0(\delta)$ the function $\bar{\Phi}_n(\eta_n)$ 
is increasing in an interval $\eta_n\in (\eta_n^{\ast\ast},\eta_n^{\ast\ast} + \bar{\eta}_n)$, where  
$\bar{\eta}_n\in\left[\left(  \frac{K_-}{K_+}\right)^{\frac{5}{3}}\left(  \frac{3}{2}\right)^{\frac{25}{3}}\frac{L_n^3}{(\eta_{a_n})^5}(1-\delta),
\left(  \frac{K_-}{K_+}\right)^{\frac{5}{3}} \left(  \frac{3}{2}\right)^{\frac{25}{3}}\frac{L_n^3}{(\eta_{a_n})^5}(1+\delta)\right]$. 
Moreover, defining $L_{n+1}=\Phi(\tau_{n+1}^+)$ we obtain:
\begin{equation}\label{W7E1}
\left| L_{n+1}-\frac{Q_n(Y_{\ast,n})}{(\eta_{a_n})^{15}}L_n^{10}\right| 
\leq\frac{\delta Q_n(Y_{\ast,n})}{(\eta_{a_n})^{15}} L_n^{10}
\end{equation}
where $Y_{\ast,n}=\frac{1}{2}\left(A_n+\sqrt{A_n^2+4A_n \,B_n }\right)$,  
and also:
\begin{equation}\label{W7E2}
\left| \tau_{n+1}^+ -\tau_n^+ - \eta_n^{\ast\ast} L_n^\frac{1}{3}
-Y_{\ast,n}\frac{L_n^{\frac{10}{3}}}{(\eta_{a_n})^5}\right|
\leq \delta Y_{\ast,n}\frac{L_n^{\frac{10}{3}}}{(\eta_{a_n})^5}\,.
\end{equation}
\end{lemma}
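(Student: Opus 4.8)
The plan is to exploit that, for $\eta_n\ge\eta_n^{\ast\ast}$, the rescaled solution $\bar\Phi_n$ has already left the boundary layer, so it is large and the forcing term $\frac{1}{L_n^3}(\bar\Phi_n)^{-3}$ in (\ref{W1E1}) is negligible; then $\bar\Phi_n$ is, to leading order, the cubic polynomial determined by the Cauchy data that Lemma~\ref{ApprDer} supplies at $\eta_n^{\ast\ast}$, and (\ref{W7E1}), (\ref{W7E2}) and the range of $\bar\eta_n$ all follow by locating the first maximum of that cubic.

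Concretely, I would pass to a bounded window by setting $\bar\Phi_n(\eta_n)=\frac{L_n^9}{(\eta_{a_n})^{15}}\psi_n(Y)$ with $Y=\frac{(\eta_{a_n})^5}{L_n^3}(\eta_n-\eta_n^{\ast\ast})$. Using (\ref{defphi}), (\ref{2order1:scale}) and the three inequalities of Lemma~\ref{ApprDer}, $\psi_n$ solves $\psi_n'''+1=\varepsilon_n\psi_n^{-3}$ with $\varepsilon_n$ a negative power of $L_n$ tending to $0$, and the Cauchy data $(\psi_n(0),\psi_n'(0),\psi_n''(0))$ coincide with $(Q_n(0),Q_n'(0),Q_n''(0))=(\frac{A_n}{4}B_n^2,\frac{A_nB_n}{2},\frac{A_n}{2})$ up to relative errors $O(\delta)$. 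A continuation argument then gives $\|\psi_n-Q_n\|_{C^2([0,Y_{\ast,n}+1])}\le C\delta$: as long as $\psi_n$ stays within a constant factor of $Q_n$ it is increasing and bounded below by $\frac{A_n}{8}B_n^2$, whence $\varepsilon_n\psi_n^{-3}=o(1)$ on the window, and integrating the equation three times and applying Gronwall on this bounded $Y$-interval closes the bootstrap. Here $Y_{\ast,n}=\frac12(A_n+\sqrt{A_n^2+4A_nB_n})$ is the unique positive zero of $Q_n'$.

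From the $C^2$ closeness I would then read off the conclusions. Since $B_n\to0$, $Q_n'$ is a concave quadratic with a single positive root $Y_{\ast,n}$, so $Q_n'>0$ on $(0,Y_{\ast,n})$ and $Q_n''(Y_{\ast,n})<0$; the $C^1$ closeness forces $\bar\Phi_n$ to be strictly increasing on $(\eta_n^{\ast\ast},\eta_n^{\ast\ast}+\bar\eta_n)$ with $\bar\eta_n=\frac{L_n^3}{(\eta_{a_n})^5}Y_{\ast,n}$, and to have exactly one interior local maximum there. Because the previous lemmas already describe the monotone descent from $\tau_n^+$ and the single minimum $\tau_n^-$ inside the boundary layer, this maximum must be $\tau_{n+1}^+$. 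Then $L_{n+1}=L_n\bar\Phi_n(\tau_{n+1}^+)=\frac{L_n^{10}}{(\eta_{a_n})^{15}}\psi_n(Y_{\ast,n})$ and $\tau_{n+1}^+=\tau_n^++L_n^{1/3}\eta_n^{\ast\ast}+\frac{L_n^{10/3}}{(\eta_{a_n})^5}Y_{\ast,n}$; replacing $\psi_n(Y_{\ast,n})$ by $Q_n(Y_{\ast,n})$ and absorbing the $O(\delta)$ errors gives (\ref{W7E1}) and (\ref{W7E2}), while inserting the (limiting) values of $a_n$ and $\eta_{a_n}$ into $A_n$, together with $Y_{\ast,n}=A_n(1+o(1))$, gives the explicit interval for $\bar\eta_n$.

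The main difficulty is the last qualitative point: making the approximation quantitative enough to rule out spurious critical points, so that the maximum we produce is genuinely the next one and lies $O(\delta)$-close to $Y_{\ast,n}$. Working in the bounded variable $Y$ is what keeps the Gronwall constant uniform in $n$ even though $\bar\eta_n\to\infty$; the delicate part is that Lemma~\ref{ApprDer} only pins the Cauchy data up to factors $(1\pm\delta)$, so one has to check that such perturbations of the non-degenerate concave quadratic $Q_n'$ (modulo the vanishing parameter $B_n$) move its simple positive root by at most $O(\delta)$ and create no extra zero in $(0,Y_{\ast,n})$ — a uniform implicit-function/continuity estimate.
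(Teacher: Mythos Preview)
Your proposal is correct and follows essentially the same route as the paper. Both arguments rescale to the bounded variable $Y=(\eta_{a_n})^5 L_n^{-3}(\eta_n-\eta_n^{\ast\ast})$, compare $\bar\Phi_n$ (equivalently your $\psi_n$) with the cubic $Q_n$, run a continuation/bootstrap using the Cauchy data from Lemma~\ref{ApprDer}, and then read off the location and height of the first maximum. The only cosmetic differences are that the paper keeps the comparison in the form $|\bar\Phi_n-J_n^3Q_n|\le\delta J_n^3Q_n$ (a relative bound, with $J_n=L_n^3/(\eta_{a_n})^5$) and estimates the triple integral directly via the substitution $Y=B_nZ$, isolating the key smallness condition $L_n^3 J_n^9 B_n^5=\xi_{\ast\ast}^5\to\infty$; you instead use the cruder uniform lower bound $\psi_n\ge\tfrac{A_n}{8}B_n^2$ and phrase the closing step as Gronwall, which also works since $\varepsilon_n B_n^{-6}\to0$.
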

\begin{proof}
First, we define $J_n=\frac{L_n^3}{(\eta_{a_n})^5}$, for simplicity of notation, and observe that 
$\lim_{n\to \infty}J_n=\infty$. Integrating (\ref{W1E1}) we obtain
\begin{equation}\label{W6E3}
\bar{\Phi}_n(\eta_n)= P(\eta_n)
+ \frac{1}{L_n^3}\int_{\eta_n^{\ast\ast}}^{\eta_n}\int_{\eta_n^{\ast\ast}}^{s_1}\int_{\eta_n^{\ast\ast}}^{s_2}
\frac{1}{(\bar{\Phi}_n(s_3))^3} \, ds_3\,ds_2\,ds_1
\end{equation}
with
\[
P(\eta_n)=\bar{\Phi}_n(\eta_n^{\ast\ast})+\frac{d\bar{\Phi}_n(\eta_n^{\ast\ast})}{d\eta_n}\left(\eta_n-\eta_n^{\ast\ast}\right)  
+\frac{1}{2}\frac{d^2\bar{\Phi}_n(\eta_n^{\ast\ast})}{d\eta_n^2}(\eta_n-\eta_n^{\ast\ast})^2
-\frac{(\eta_n-\eta_n^{\ast\ast})^{3}}{6}\,.
\]
As a direct consequence of Lemma~\ref{ApprDer} one has that
\begin{equation}\label{W6E2}
\left|P(\eta_n)- J_n^3 Q(Y)  \right| +\left|\frac{d}{d\eta_n} (P(\eta_n)-J_n^3 Q(Y))\right| 
+\left|\frac{d^2}{d\eta_n^2}(P(\eta_n)  -J_n^3 Q(Y) )  \right| 
\leq \delta J_n^3 Q(Y) 
\end{equation}
with $\eta_n^{\ast\ast}\leq\eta_n$,  $Y\leq (1+\delta) Y_{\ast,n}$, if $n$ 
is sufficiently large and where $\eta_n-\eta_n^{\ast\ast}=J_n Y$.

We can now estimate the effect of the last term in (\ref{W6E3}). To this end 
we use the type of continuation argument that we had used repeatedly. Now our goal 
is to show that:
\begin{equation}\label{W6E4}
\bar{\Phi}_n(\eta_n) \geq \frac{J_n^3 Q(Y)}{8}\,.
\end{equation}
This inequality is satisfied for $\eta_n=\eta_n^{\ast\ast}$. On the other hand, as long as this inequality holds, we have, using the change 
of variables $\eta_n-\eta_n^{\ast\ast}=J_n Y$, that
\[
\frac{1}{L_n^3}
\int_{\eta_n^{\ast\ast}}^{\eta_n}\int_{\eta_n^{\ast\ast}}^{s_1}\int_{\eta_n^{\ast\ast}}^{s_2}\frac{1}{(\bar{\Phi}_n(s_{3}))^{3}}ds_3\,ds_2 \, ds_1
\leq\frac{2^9}{L_n^3 J_n^6} \int_0^Y \int_{0}^{Y_1} \int_{0}^{Y_2}\frac{1}{(Q(Y_3))^3} dY_3\, dY_2\, dY_1\,.
\]
If $0<Y_3\leq Y_{\ast,n}(1 + \delta)$ then $Q(Y_3) \geq C\left(\frac{A_n}{4}\,B_n^2+\frac{A_n}{2}\,B_n Y_3 +\frac{A_n}{4} Y_3^2\right)$. 
Then, using the change of variable $Y=B_n Z$ we obtain:
\begin{eqnarray*}
\frac{1}{L_n^3}\int_{\eta_n^{\ast\ast}}^{\eta_n} \int_{\eta_n^{\ast\ast}}^{s_1}\int_{\eta_n^{\ast\ast}}^{s_2}\frac{1}{(\bar{\Phi}_n(s_3))^3}ds_3 \, ds_2\,  ds_1
\\
\leq\frac{C}{L_n^3 J_n^6 B_n^3}\int_0^{Y/B_n}\int_0^{Z_1}\int_0^{Z_2}\frac{1}{\left(\frac{A}{4}+\frac{A}{2}Z_3 + \frac{A}{4}Z_3^2\right)^3} dZ_3\,dZ_2\,dZ_1    
\leq\frac{C(Y)^2}{L_n^3 J_n^6 B_n^5}\,,
\end{eqnarray*}
for $n$ large enough. Here the first integral in the integrand can be estimated by a constant using that $B_n\to 0$ as $n\to\infty$. 
Analogous estimates can be proved for the first two derivatives of this
integral with a similar argument.

We observe that $\frac{C}{L_n^3 J_n^6 B_n^5}\ll J_n^3$ as $n\to\infty$, 
since this is equivalent to $C\ll L_n^3 J_n^9 B_n^5=(\xi_{\ast\ast})^5$ 
as $n\to\infty$ and this is satisfied due to the definition of $\xi_{\ast\ast}$. 
Therefore (\ref{W6E4}) holds for $0\leq Y\leq (1+\delta) Y_{\ast,n}$. 
It then follows from (\ref{W6E3}), (\ref{W6E2}) that:
\begin{equation}\label{W6E5}
\left| \bar{\Phi}_n(\eta_n)-J_n^3 Q(Y)  \right|
+\left| \frac{d}{d\eta_n}(\bar{\Phi}_n(\eta_n)-J_n^3 Q(Y))  \right| 
+\left| \frac{d^2}{d\eta_n^2}(\bar{\Phi}_n(\eta_n)-J_n^3 Q(Y))\right|
\leq\delta J_n^3 Q(Y) \,,
\end{equation}
with $\eta_n^{\ast\ast}\leq\eta_n$ and $Y\leq (1+\delta)  Y_{\ast,n}$.

It then follows the existence of $\bar{\eta}_n>0$ as in the statement of the Lemma, and such that
 $\frac{d}{d\eta_n}(\bar{\Phi}_n(\eta_n^{\ast\ast}+\bar{\eta}_n))=0$, 
and that $\frac{d}{d\eta_n} (\bar{\Phi}_n(\eta_n))>0$ for $\eta_n\in (\eta_n^{\ast\ast},\eta_n^{\ast\ast}+\bar{\eta}_n)$. 
Moreover, (\ref{W6E5}) implies (\ref{W7E1}) and (\ref{2order1:scale}) yields (\ref{W7E2}).
\end{proof}

\smallskip
\begin{proof}[Proof of Theorem \ref{mainoscillation}]The asymptotics (\ref{W1E8}) is just a 
consequence of (\ref{W7E1}) that can be solved recursively arguing as in the 
formal derivation of (\ref{W1E8}). On the other hand (\ref{W1E8a}) is a 
consequence of (\ref{2order1:scale}), (\ref{defphi}) and Lemma \ref{BL}. 
Finally (\ref{secder}) is a consequence of (\ref{W6E5}). Indeed, notice that 
this last formula implies
\begin{equation}\label{W7E3}
a_{n+1} = -(1+O(\delta))  \frac{Q^{\prime\prime}(Y_{\ast})}{(Q(Y_{\ast}))^{\frac{1}{3}}}\,,
 \quad Y_{\ast,n}=\frac{1}{2} (A_n+\sqrt{A_n^2+4 A_n B_n})\,.  
\end{equation}
Since $B_n\to 0$, we have $(Y_{\ast,n}-A_n)\to 0$ as $n\to\infty$. Therefore $
0<\varepsilon_0\leq a_{n+1}\leq\frac{1}{\varepsilon_{0}}$ for $n$ large. Then
\[
\frac{Q^{\prime\prime} (Y_{\ast,n})}{ (Q(Y_{\ast,n}))^{\frac{1}{3}}}
\to\left(\frac{3}{2}\right)^{\frac{1}{2}} \quad\mbox{as}\quad n\to\infty
\]
whence (\ref{W7E3}) implies, since $\delta$ can be assumed to be arbitrarily 
small for large $n$, that $a_{n+1}\to a$,
with $a$ as in (\ref{a:value}), and the result follows.
\end{proof}

\smallskip

\noindent
{\bf Acknowledgements} The authors wish to thank Marco Fontelos, John King, Andreas M\"unch, Tim Myers and Barbara Wagner, for their helpful comments and for pointing out many relevant references. C.~M. Cuesta also acknowledges the support of the Spanish Ministry of Science for the starting grant part of a {\it Ram\'on y Cajal} fellowship. J.~J.~L. Vel\'azquez was partially supported by the DGES grant MTM2010-16457.

\gdef\thesection{A1} \renewcommand{\theequation}{A1.\arabic{equation}}\setcounter{equation}{0}
\section*{Appendix 1. The operator $(\nabla\mathbf{F} + \nabla\mathbf{F}^{T}) \mathbf{N}$ in curvilinear coordinates}
As above, vectors components in Cartesian coordinates are denoted by sub-indexes $x$ and 
$y$ respectively, and when expressed in curvilinear coordinates the indexes 
$1$ (tangential to the substrate) and $2$ (normal to the substrate) are used. For a general vector $\mathbf{F}$, thus with 
$\mathbf{F}= F_{x} \mathbf{e}_{x} + F_{y} \mathbf{e}_{y} = F_{1} \mathbf{t} +
F_{2} \mathbf{n}$, we compute  $(\nabla\mathbf{F} + \nabla\mathbf{F}^{T}) \mathbf{N}$. This is done, for example,  
using the expression in curvilinear coordinates for linear differential operators given in e.g. \cite{acheson}, 
and one obtains, writing separately the normal and tangential components:
\begin{align*}
((\nabla\mathbf{F} + \nabla\mathbf{F}^{T}) \mathbf{N})\cdot\mathbf{N} =\\
2\frac{\partial h}{\partial s}\frac{\frac{\partial h}{\partial s} t_{1}
-(1-kh)t_{2} }{\left(  (1 - k\, h)^{2}  
+\left(  \frac{\partial h}{\partial s}\right)^{2}\right)  (1-k\,h)} \left(  \frac{\partial F_{1}}{\partial s}
- kF_{2} \right)  +2(1-k\,h) \frac{\frac{\partial h}{\partial s} t_{2}
+(1-k\,h)t_{1}}{(1 - k\, h)^{2} 
+ \left(  \frac{\partial h}{\partial s}\right)^{2}} \frac{\partial F_{2}}{\partial d}\\
- \frac{ 2 \frac{\partial h}{\partial s} (1-k\,h) t_{1} + 
(\left(\frac{\partial h}{\partial s}\right)^{2} 
- (1 - k\, h)^{2}) t_{2}}{\left( (1- k\, h)^{2} 
+ \left(  \frac{\partial h}{\partial s}\right) ^{2}\right)(1-k\,h) } 
\left(  \frac{\partial F_{2}}{\partial s} + kF_{1} + (1-k\,h)\frac{\partial F_{1}}{\partial d}\right)
\end{align*}
and
\begin{align*}
((\nabla\mathbf{F} + \nabla\mathbf{F}^{T}) \mathbf{N})\cdot\mathbf{T}=\\
\frac{ 2\frac{\partial h}{\partial s}(1-k\,h) t_{2} + 2((1-k\,h)^{2}
-(\frac{\partial h}{\partial s})^{2})t_{1} }{\left(  (1 - k\, h)^{2} 
+ \left(\frac{\partial h}{\partial s}\right)^{2}\right)  (1-k\,h)} 
\left(\frac{\partial F_{2}}{\partial s}+ kF_{1}+ (1-k\,h)\frac{\partial F_{1}}{\partial d}\right) \\
-2 \frac{\partial h}{\partial s}\frac{(1-k\,h)t_{1} + \frac{\partial h}{\partial s} t_{2}}{\left(  (1 - k\, h)^{2} 
+ \left(  \frac{\partial h}{\partial s}\right) ^{2} \right)  (1-k\,h)} \left(  \frac{\partial F_{1}}{\partial s}
- kF_{2}+ (1-k\,h)\frac{\partial F_{2}}{\partial d} \right)\,.
\end{align*}

\gdef\thesection{A2} \renewcommand{\theequation}{A2.\arabic{equation}}\setcounter{equation}{0}
\section*{Appendix 2. Analysis of the equation (\ref{SEphi0})}

In this section we concentrate on the study of equation (\ref{SEphi0}). In 
order to clarify them matching conditions needed in the inner regions where 
$\Phi\sim0$ that are part of the oscillatory solutions to (\ref{WilsonJones}). 
The inner variables used in this inner regions are $\zeta_{n}$, and 
$\varphi_{n}$ and so we rewrite (\ref{SEphi0}) in this variables but dropping 
the subindex $n$. The main result of this appendix is: 

\begin{theorem}\label{matchsolution} There exists a unique solution of
\begin{equation}\label{ecuacionminima}
\frac{d^{3}\varphi}{d\zeta^{3}}=\frac{1}{\varphi^{3}}
\end{equation}
with the matching condition:
\begin{equation}\label{lin:match}
\varphi(\zeta)  \sim -K\zeta+o(1)  \quad \mbox{as}\quad \zeta\to -\infty\,.
\end{equation}
Moreover, the asymptotics of $\varphi(\zeta)$ for large $\zeta$ is
given by:
\begin{equation}\label{para:match}
\varphi(\zeta)  \sim\Gamma\zeta^2\quad \mbox{as}\quad \zeta\to\infty 
\quad \text{for some}\quad \Gamma>0\,.
\end{equation}
Finally, the exists a unique solution of (\ref{para:match}) with matching condition
\[
\varphi(\zeta)  \sim -\tilde{K}\zeta+o(1)  \quad \mbox{as}\quad \zeta\to \infty\,\quad \text{for some}\quad \tilde{K}>0\,.
\]
It also satisfies that there exists a finite $\zeta^\ast$ such that
\begin{equation}\label{back:zero}
\varphi(\zeta)\to 0 \quad \mbox{as}\quad \zeta\to (\zeta^\ast)^{+}\,.
\end{equation}
All other solutions satisfy (\ref{para:match}) for increasing $\zeta$, and, for decreasing $\zeta$, either (\ref{back:zero}) or
\[
\varphi(\zeta)  \sim\tilde{\Gamma}\zeta^2 \quad \mbox{as}\quad \zeta\to -\infty 
\quad \text{for some}\quad \tilde{\Gamma}>0\,
holds.
\]
\end{theorem}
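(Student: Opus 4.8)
The plan is to reduce $(\ref{ecuacionminima})$ to a planar autonomous system by exploiting its scaling invariance, classify \emph{all} positive solutions by an elementary monotonicity argument, and produce the two exceptional ``linear'' solutions by a shooting argument in the phase plane. First I would introduce the scaling-invariant variables of $(\ref{SEphi0-trans})$, writing $\varphi'=\varphi^{-1/3}u$, $\varphi''=\varphi^{-5/3}v$, $z=\int\varphi^{-4/3}\,d\zeta$; since $(\varphi,\zeta)\mapsto(l\varphi,l^{4/3}\zeta)$ leaves $(u,v,z)$ unchanged, $(\ref{ecuacionminima})$ becomes the scale-free planar system $(\ref{phi0system})$, namely $\dot u=v+\tfrac13u^{2}$, $\dot v=1+\tfrac53uv$, together with the decoupled quadrature $\dot\varphi=u\varphi$; a positive solution of $(\ref{ecuacionminima})$ is then determined by an orbit of $(\ref{phi0system})$, a multiplicative constant for $\varphi$, and a translation in $\zeta$ (recovered by inverting $z\mapsto\zeta$). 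A dominant-balance computation gives the dictionary: $\varphi\sim\Gamma\zeta^{2}$ as $\zeta\to\pm\infty$ corresponds to an orbit reaching $u=\mp\infty$ with $v\sim\tfrac12u^{2}$; an asymptotically linear $\varphi$ as $\zeta\to-\infty$ (resp.\ $\zeta\to+\infty$) to the separatrix behaviour $u\to-\infty,\ v\to0^{+}$ (resp.\ $u\to+\infty,\ v\to0^{-}$) of Lemma~\ref{separa}, for which in fact $\varphi(\zeta)=-K\zeta-\tfrac1{2K^{3}}\log|\zeta|+c+o(1)$; and $\varphi\to0$ at a finite $\zeta^{*}$, necessarily with $\varphi\sim c(\zeta-\zeta^{*})^{3/4}$, to the orbit emanating as $z\to-\infty$ from the unique equilibrium $P^{*}=(u^{*},v^{*})$ of $(\ref{phi0system})$, which lies in $\{v<0\}$ and (from the linearisation: trace and determinant positive, discriminant negative) is a spiral source. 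Once the logarithmic term is accounted for, $(\ref{lin:match})$ fixes exactly the translation freedom.

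\textbf{Classification.}
For any positive solution $\varphi'''=1/\varphi^{3}>0$, so $\varphi''$ is strictly increasing and changes sign at most once. I would split into cases. If $\varphi''>0$ throughout, $\varphi$ is convex and positive, hence (as $\varphi'''\to0$ whenever $\varphi\to\infty$) defined on all of $\mathbb{R}$, non-vanishing, with finite limits of $\varphi''$ at $\pm\infty$: the limit $2\Gamma$ at $+\infty$ is positive, giving $(\ref{para:match})$, while at $-\infty$ the limit is either positive (giving $\varphi\sim\widetilde\Gamma\zeta^{2}$) or zero (which, since then $\varphi''(\zeta)\asymp\zeta^{-2}$, forces $\varphi'\to-K<0$, i.e.\ $\varphi\sim-K\zeta$). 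If $\varphi''<0$ throughout, $\varphi$ is concave and positive; using the dictionary it is defined on a half-line $(\zeta^{*},\infty)$ with $\varphi(\zeta^{*})=0$ and with $\varphi$ linear of positive slope at $+\infty$ (a short argument excludes ``forward'' vanishing and bounded intervals of definition). If $\varphi''$ changes sign (necessarily from $-$ to $+$, at a finite $\zeta_{c}$), then on the left $\varphi$ is concave and positive, so $(\ref{back:zero})$ holds for some finite $\zeta^{*}<\zeta_{c}$, and on the right $\varphi\sim\Gamma\zeta^{2}$. This yields every assertion of the theorem about the asymptotic \emph{shape} of solutions; it remains to establish existence and uniqueness of the two linear types.

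\textbf{The two exceptional solutions.}
Here I would work in the phase plane. The linear-at-$-\infty$ behaviour corresponds to the orbit $\Sigma_{-}$ with $u\to-\infty$, $v\to0^{+}$ (precisely $v\sim\tfrac12|u|^{-1}$), whose existence and uniqueness follow from a shooting argument: after compactifying near $u=-\infty$, the behaviour $v\to0^{+}$ is a codimension-one (separatrix) alternative to $v\sim\tfrac12u^{2}$, realised as the outset of a boundary equilibrium — this is the content of Lemma~\ref{separa}. Since $\{v>0\}$ is forward invariant ($\dot v=1>0$ on $\{v=0\}$) and there $\dot u=v+\tfrac13u^{2}>0$, $\Sigma_{-}$ stays in $\{v>0\}$, $u$ is monotone along it, and it therefore runs to $u=+\infty$ and must leave with $v\sim\tfrac12u^{2}$; translating back gives a solution linear as $\zeta\to-\infty$ and quadratic as $\zeta\to+\infty$. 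Choosing the multiplicative constant to match the prescribed $K$ and the translation to kill the subleading constant yields the unique solution of $(\ref{lin:match})$, with $+\infty$ asymptotics $(\ref{para:match})$. Symmetrically, the linear-at-$+\infty$ solution corresponds to the orbit $\Sigma_{+}$ with $u\to+\infty$, $v\to0^{-}$: a shooting argument among the orbits leaving the spiral source $P^{*}$ identifies $\Sigma_{+}$ as the separatrix between those that enter $\{v>0\}$ and those that run to $u=+\infty$ inside $\{v<0\}$, and traced backwards it winds into $P^{*}$, so the corresponding $\varphi$ satisfies $(\ref{back:zero})$; again the two constants are fixed by the prescribed slope and the ``$o(1)$'' condition. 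Finally, ``all other solutions'' are precisely the cases with $\varphi''>0$ and $\varphi''(-\infty)>0$, or with a sign change of $\varphi''$, whose forward and backward behaviours have just been shown to be $(\ref{para:match})$ and, respectively, $\varphi\sim\widetilde\Gamma\zeta^{2}$ or $(\ref{back:zero})$.

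\textbf{Main obstacle.}
The classification step is routine; the real work is the phase-plane step. The delicate point is to show that $\Sigma_{-}$ and $\Sigma_{+}$ actually \emph{connect the intended ends} and are not captured by $P^{*}$, by a periodic orbit, or by the wrong behaviour at infinity. Because $P^{*}$ is a spiral the portrait winds, so this needs a genuine global analysis: excluding periodic orbits, controlling the $\alpha$- and $\omega$-limit sets, and exploiting the invariance of $\{v>0\}$, together with sharp asymptotics near $u=\pm\infty$ and near $P^{*}$ — which is where Lemmas~\ref{separa} and~\ref{GH} and the estimates $(\ref{W3E6c})$ carry the load. A secondary nuisance is the bookkeeping in the change of variables, in particular the forced $\log|\zeta|$ correction in the linear-at-$-\infty$ profile, which must be tracked to make the uniqueness in $(\ref{lin:match})$ precise.
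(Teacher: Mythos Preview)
Your proposal is correct and shares the paper's core strategy --- reduce via the scaling-invariant variables $(\ref{SEphi0-trans})$ to the planar system $(\ref{phi0system})$, identify the two exceptional ``linear'' solutions with the separatrices $\bar v$ and $\hat v$ of Lemma~\ref{separa}, and translate the dictionary back --- but you organise the remaining work differently. You classify generic solutions directly on the ODE, using that $\varphi'''>0$ makes $\varphi''$ monotone and splitting on its sign; this is more elementary and yields the forward/backward dichotomy (quadratic growth vs.\ finite-$\zeta^{*}$ vanishing) with almost no phase-plane input. The paper instead stays in the phase plane throughout: it compactifies near $u=\pm\infty$ by two further changes of variables (the auxiliary systems $(\ref{hyp-system})$ and $(\ref{par-system})$), reads $\bar v$ and $\hat v$ off as stable/unstable manifolds of boundary equilibria there, and then disposes of periodic orbits by an explicit spiral-box construction (Lemma~\ref{tozero-bwds}). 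Your route is quicker for the classification but leaves the existence, uniqueness and sharp asymptotics of $\Sigma_{\pm}$ at the level of a sketch (``after compactifying near $u=-\infty$'', ``a shooting argument among the orbits leaving $P^{*}$''); the paper's two compactifications are exactly what make those steps rigorous. Two small caveats: your Case~2 ($\varphi''<0$ throughout) as stated also contains the self-similar solution $\varphi=c\,(\zeta-\zeta^{*})^{3/4}$ (the equilibrium $P^{*}$ itself), which is sublinear rather than linear at $+\infty$ and should be singled out; and Lemma~\ref{GH} belongs to Section~\ref{section:3.3}, not to the proof of this theorem --- the relevant global input here is Lemma~\ref{tozero-bwds}.
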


We remark that in this paper we only need the existence of the solution satisfying (\ref{lin:match}) and (\ref{para:match}), 
the rest of the behaviours are stated for completeness and because they will be used in \cite{CV2}.

In order to prove this theorem, we reduce equation (\ref{ecuacionminima}) to a 
second order autonomous system of ODEs, as follows
\begin{equation}\label{SEphi0-trans}
\frac{d\varphi}{d\zeta}=\varphi^{-\frac{1}{3}}\,u\,, \ \frac{d^{2}\varphi}{d\zeta}
=\varphi^{-\frac{5}{3}}\,v \,,\ d\zeta=\varphi^{\frac{4}{3}}dz\,,
\end{equation}
giving the system
\begin{equation}\label{phi0system}
\frac{d\varphi}{d\zeta}=u\,\varphi\, , \quad
\frac{du}{dz}=v+\frac{1}{3}u^{2}\,, \quad
\frac{dv}{dz}=1+\frac{5}{3}v\,u\,.
\end{equation}

We now prove several lemmas that give the behaviour of the orbits of this system. 

The last two equations in (\ref{phi0system}) can be studied independently by 
means of a phase-plane analysis. The isoclines are 
$\Gamma_{1}=\{(u,v):\ v+\frac{1}{3}u^{2}=0\}$, that has $du/dz=0$, and 
$\Gamma_{2}=\{(u,v):\ 1+\frac{5}{3}v\,u=0\}$, that has $dv/dz=0$. 
The only critical point is $p_{e}=(u_{e},v_{e})=((9/5)^{\frac{1}{3}},-(1/3)(9/5)^{\frac{2}{3}})$, 
and linearisation gives two complex eigenvalues with positive real part, namely 
$\lambda_{\pm}=\frac{1}{6}\left(  \frac{9}{5}\right)^{\frac{1}{3}}\left(7\pm\sqrt{11}i\right)$.

We distinguish five regions in the phase-plane, the ones separated by the 
isoclines. These are depicted in 
Figure~\ref{phase-plane} where direction field is also shown in each of these 
regions. 
\begin{figure}[th] 
\begin{center}
\input{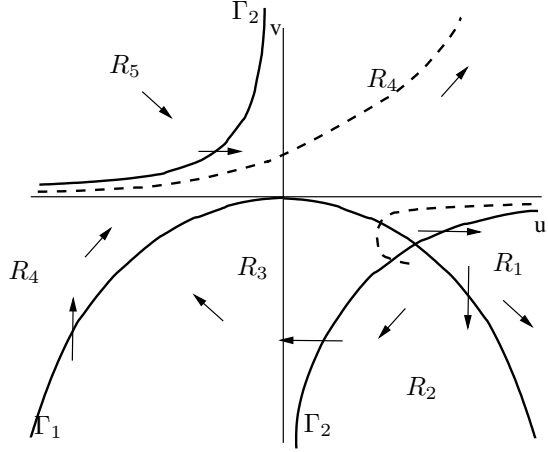}
\end{center}
\caption{Phase portrait associated to (\ref{phi0system}) showing the direction 
field. The solid lines represent the isoclines and the dashed ones the separatrices.}
\label{phase-plane}
\end{figure}
Standard arguments imply that any orbit on the phase-plane eventually crosses 
the isoclines into the region $R_{4}$ forwardly in $z$. If, however, an orbit 
has $(u,v)\in R_{4}$ at some value of $z$, it is possible to discern from 
which of the regions is coming from for smaller values of $z$ by identifying 
the separatrices of the system. We have the following result.

\begin{lemma}[Separatrices]\label{separa}
\begin{enumerate}
\item There exists a unique orbit $v=\bar{v}(u)$ in the phase-plane associated 
to system (\ref{phi0system}) that is contained in $R_{4}$ for all $u\in\mathbb{R}$. 
Moreover, $\bar{v}(u)$ has the following asymptotic behaviour
\begin{equation}\label{W3E6c}
\bar{v}(u)  =\frac{u^{2}}{2} + O\left(u^{\frac{4}{5}}\right)
\quad \mbox{as}\quad u\to\infty\,,\quad \bar{v}(u) = 
-\frac{1}{2u}(1+o(1))  \quad\mbox{as}\quad u\to -\infty
\end{equation}

\item There exists a unique orbit $v=\hat{v}(u)$ in the phase-plane associated 
to system (\ref{phi0system}) that has the following asymptotic behaviour 
\begin{equation}\label{W3E6d}
\hat{v}(u)= -\frac{1}{2u}\left(  1+o\left(  1\right)  \right)
\quad \mbox{as}\quad u\to +\infty
\end{equation}
and
\begin{equation}\label{W3E6e}
|(u,\hat{v})-(u_{e},v_{e})|\leq C e^{\mbox{Re}(\lambda_{+})z} 
\quad\mbox{as}\quad z\to-\infty\,.
\end{equation}
\end{enumerate}
\end{lemma}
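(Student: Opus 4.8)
My plan is to discard the first equation of (\ref{phi0system}) and work entirely with the planar system
\[
\frac{du}{dz}=v+\frac13u^{2}\,,\qquad \frac{dv}{dz}=1+\frac53uv\,,
\]
exploiting that on $R_{4}$ the factor $v+\frac13u^{2}$ keeps a fixed sign (positive, say), so that along an orbit that stays in $R_{4}$ the variable $u$ is a legitimate parameter, the orbit is a graph $v=v(u)$ solving the scalar equation $\frac{dv}{du}=\frac{1+\frac53uv}{v+\frac13u^{2}}=:F(u,v)$, and — by uniqueness for this scalar ODE — two such orbits cannot cross inside $R_{4}$. This monotone ordering of the orbits of $R_{4}$ is the backbone of every shooting argument below. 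I would first pin down the two tail regimes and only afterwards carry out the selection.

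For the tails: at $u\to+\infty$, comparing $F(u,\alpha u^{2})$ with $2\alpha u$ shows, for $u$ large, that an orbit crosses the parabola $\{v=\alpha u^{2}\}$ upward when $\alpha<\frac12$ and downward when $\alpha>\frac12$; hence for $\alpha_{-}<\frac12<\alpha_{+}$ the cone $\{\alpha_{-}u^{2}\le v\le\alpha_{+}u^{2}\}$ is a forward trapping funnel, so any orbit meeting it satisfies $v\sim\frac12u^{2}$. Writing $v=\frac12u^{2}+g(u)$ turns the equation into $g'=\frac45\,g/u+\frac65u^{-2}+(\text{lower order})$, with homogeneous solution $\sim u^{4/5}$ and particular solution $O(u^{-1})$; a continuation argument upgrades this to the rigorous bound $g(u)=O(u^{4/5})$, i.e. the first relation in (\ref{W3E6c}). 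At $u\to-\infty$ the relevant barrier is $v=-\frac1{2u}$: the substitution $v=-\frac1{2u}+w(u)$ gives $w'=\frac5u\,w+(\text{lower order})$, whose solutions grow like $C|u|^{5}$, so an orbit that stays in $R_{4}$ can have $v$ bounded as $u\to-\infty$ only if that leading coefficient vanishes, and then $v\to0^{+}$ and the next order forces $v=-\frac1{2u}(1+o(1))$, the second relation in (\ref{W3E6c}). In particular \emph{at most one} orbit of $R_{4}$ can stay in $R_{4}$ all the way to $u=-\infty$; every other orbit of $R_{4}$ leaves it in backward ``time'' transversally across one of the isoclines $\Gamma_{1},\Gamma_{2}$, and such a transversal exit is an open condition on the initial point.

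For the selection, in part (1) I would fix a cross--section $\ell=R_{4}\cap\{u=u_{0}\}$, parametrise orbits by their height on $\ell$, and use the phase portrait (the direction field in the five regions together with the fact — quoted just before the lemma — that every orbit eventually enters $R_{4}$, and that $R_{4}$ is forward invariant): by the ordering, the heights whose orbit leaves $R_{4}$ backward through one isocline form an open up--set in $\ell$, those leaving through the other an open down--set, and the closed complement — the ``surviving'' heights — reduces by the previous paragraph to a single point. The corresponding orbit neither leaves $R_{4}$ backward nor blows up, hence tends to $u=-\infty$ with $v\to0^{+}$; forward it stays in $R_{4}$ and has $u\to+\infty$, so it is defined on all of $\mathbb{R}$. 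This is the orbit $v=\bar v(u)$ of part (1); it is of funnel type at $+\infty$, so it obeys (\ref{W3E6c}), and uniqueness is immediate since any orbit global in $R_{4}$ restricts on $\ell$ to this same surviving point. For part (2) I would run the analogous shooting, now on the orbits issuing from a small neighbourhood of the spiral source $p_{e}$, parametrised by the direction in which they leave $p_{e}$: those leaving ``high'' are caught by the funnel of part (1) and satisfy $v\sim\frac12u^{2}$, those leaving ``low'' drop out of $R_{4}$, and by continuity of the flow a threshold orbit $\hat v$ exists. It lies (as a limit of orbits issuing from $p_{e}$) in the backward basin of $p_{e}$, so $(u,\hat v)\to p_{e}$ as $z\to-\infty$, and since $\lambda_{\pm}$ share the same positive real part the backward contraction is at the uniform rate $e^{\mathrm{Re}(\lambda_{+})z}$, which is (\ref{W3E6e}); forward $\hat v$ stays in $R_{4}$ and cannot be of funnel type (it is squeezed below the funnel family), so $v\to0^{-}$ and hence $v=-\frac1{2u}(1+o(1))$, which is (\ref{W3E6d}). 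Uniqueness of $\hat v$ then follows by combining the two conditions: an orbit satisfying (\ref{W3E6e}) lies in the backward basin of $p_{e}$, and among those orbits the monotone ordering forces at most one to have the slow tail (\ref{W3E6d}).

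The routine parts are the barrier computations; what requires real care is the global phase--plane bookkeeping that underlies the selection: accurately locating $R_{4}$ and showing it is forward invariant and absorbing, checking that the vector field is transversal to the isoclines away from $p_{e}$ so that the ``exit'' sets are genuinely open, and — the genuinely delicate point — separating $\hat v$ from the neighbouring orbits that share its forward tail $v\sim-\frac1{2u}$, which is exactly why both (\ref{W3E6d}) and (\ref{W3E6e}) are needed to characterise it. Turning the formal balances $g\sim Cu^{4/5}$ and $w\sim C|u|^{5}$ into rigorous one--sided estimates must also be done by continuation, because the linearised equations are only marginally (power--law) stable rather than exponentially so.
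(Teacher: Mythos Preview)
Your overall strategy—reduce to the scalar equation $\frac{dv}{du}=F(u,v)$ on $R_4$, build barriers for the two tails, then shoot—is sound, and the $O(u^{4/5})$ bootstrap is exactly what the paper does for that refinement. But the paper's route to existence and uniqueness is quite different, and the difference matters precisely at the step you yourself flag as delicate.

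The paper does not shoot. It compactifies each end of the $u$-axis by an explicit change of variables that turns the asymptotic regime into a finite hyperbolic critical point. For the tail $v\sim-\frac{1}{2u}$ it sets $w=uv$, $\frac{d}{dr}=v\,\frac{d}{dz}$, obtaining
\[
\frac{dw}{dr}=w+2w^{2}+v^{3}\,,\qquad \frac{dv}{dr}=v\Bigl(1+\tfrac53 w\Bigr)\,,
\]
which has a saddle at $(w,v)=(-\tfrac12,0)$. Its two one-dimensional invariant-manifold branches in $\{v>0\}$ and $\{v<0\}$ are, after undoing the transformation, exactly $\bar v$ near $u=-\infty$ and $\hat v$ near $u=+\infty$; existence, uniqueness and the rate all come from the stable manifold theorem in one stroke. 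The $v<0$ branch is then followed \emph{inside this auxiliary system} to the spiral point corresponding to $p_e$, which gives (\ref{W3E6e}) with no shooting at all. For the parabolic tail the analogous blow-up is $V=v/u^{2}$, $U=1/u$, yielding a system with a sink at $(U,V)=(0,\tfrac12)$, so every orbit reaching $R_4$ with $v>0$ is drawn to $v\sim\tfrac12 u^{2}$.

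The genuine gap in your argument is the step ``$\hat v$ cannot be of funnel type (it is squeezed below the funnel family), so $v\to0^{-}$''. The parabolic funnel is forward-attracting, hence the set of orbits entering it is open; nothing prevents your threshold orbit from lying on its boundary \emph{and still being of funnel type}. What is actually needed is the observation that around $v=-\frac{1}{2u}$ the linearised equation reads $w'\sim\frac{5}{u}w$, so this curve is a forward repeller separating the funnel orbits above from the $R_4$-exiting orbits below; only then does the shooting threshold identify with it, and uniqueness follows because a repeller at $u\to+\infty$ admits at most one orbit with that tail. In the paper this repelling character is encoded once and for all as the saddle at $(w,v)=(-\tfrac12,0)$, and the issue never arises.
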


\begin{remark}
The separatrix $v=\bar{v}(u)$ is uniquely determined by the 
problem
\begin{equation}\label{W3E3}
\frac{d\bar{v}}{du}=\frac{1+\frac{5}{3}u\bar{v}}{\bar{v}+\frac{1}{3}u^{2}}\,,
\quad \bar{v}(u)  \sim - \frac{1}{2u}\quad \mbox{as}\quad u\to -\infty\,.
\end{equation}
\end{remark}

\begin{proof}
In order to capture the behaviour (\ref{W3E6d}), we 
perform the following transformation.
\begin{equation}\label{trans:hyp}
w = u\,v\,,\ \frac{d}{dr}=v\frac{d}{dz}\,.
\end{equation}
If the behaviour is as expected then the transformation is locally valid as long as $v\neq 0$, thus, 
one can integrate the second equation in (\ref{trans:hyp}). Using the equation for $u$ in (\ref{phi0system}) 
we obtain the system 
\begin{equation}\label{hyp-system}
\frac{dw}{dr} = w + 2 w^{2} + v^{3}\,, \quad
\frac{dv}{dr} =v\left(  1 + \frac{5}{3}w \right)  \,,
\end{equation}
that has critical points $(-\frac{1}{2},0)$, $(0,0)$ and $\left(-\frac{3}{5},-\left(  \frac{3}{25}\right)^{\frac{1}{3}}\right)$. 
The point $\left(  0,-\frac{1}{2}\right)$ is a saddle-point. Linearisation around this point gives a 
diagonal matrix with eigenvalues $-1$ in the direction $\binom{1}{0}$ and 
$\frac{1}{6}$ in the direction $\binom{0}{1}$. 
The point $\left(  0,0\right) $ is a (degenerate) source (thus unstable).  
Linearisation around it gives a diagonal matrix with the double eigenvalue $1$. 
The point $\left(  -\frac{3}{5},-\left(\frac{3}{25}\right)^{\frac{1}{3}}\right)$ is an unstable spiral. 
Indeed, linearisation gives the eigenvalues $-\frac{7}{10}\pm\frac{1}{10}\sqrt{11}i$. 
The phase-portrait for system (\ref{hyp-system}) is shown in Figure~\ref{hyp-phase-plane}. 
\begin{figure}[th]
\begin{center}
\input{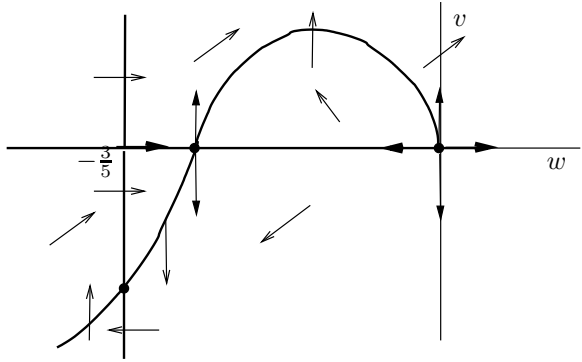}
\end{center}
\caption{Phase portrait associated to (\ref{hyp-system}) showing the direction 
field and the eigenvectors at non-oscillatory critical points. The thick solid 
lines represent the isoclines.} 
\label{hyp-phase-plane}
\end{figure}
By using standard arguments it is easy to show that the unique orbit coming 
out of $(-1/2,0)$ tangential to $\binom{0}{-1}$ ends up at the critical point 
$\left(  -\frac{3}{5},-\left(  \frac{3}{25}\right)^{\frac{1}{3}}\right)$ 
as $r\to+\infty$ and has $v<0$ for all $r$. The unique orbit coming out of $(-1/2,0)$ tangential to 
$\binom{0}{1}$ stays unbounded in the region with $dv/dw>0$. The local behaviour of these orbits
give in the original variables give two separatrices. From the stable manifold theorem, there 
exist a $\bar{r}$ sufficiently small and a positive constant $C>0$ such that 
\[
\left|  \binom{w}{v}-\binom{-\frac{1}{2}}{0}\right|  \leq e^{\frac{1}{6}r}C 
\quad\mbox{for all}\quad r<\bar{r}\,.
\]
We then have 
\[
|v|\,,\ |u\,v+\frac{1}{2}| \leq e^{\frac{1}{6}r}C \quad\mbox{for}\quad r<\bar{r}
\]
and there are positive constants $C_{1}$ and $C_{2}$ such that 
\[
6 C\, e^{\frac{1}{6}r} + C_{1} \leq z(r)\leq6 C\, e^{\frac{1}{6}r} + C_{2} 
\quad\mbox{for}\quad r\leq\bar{r}
\]
and, in particular, there exists a constant $z_{0}$ such that 
\[
\lim_{r\to-\infty}z(r)= z_{0} \quad\mbox{and} \quad z(r)>z_{0} \quad 
\mbox{for all} \quad r\leq\bar{r}\,.
\]
Translating this behaviour into the original variables of system (\ref{phi0system}), we obtain the existence of 
a unique orbit $\bar{v}(u)$ satisfying the second asymptotic behaviour in (\ref{W3E6c}), and the existence of a unique orbit 
$\hat{v}(u)$ satisfying (\ref{W3E6d}) 
the solutions on this orbit (with $v<0$ for all $r$) have, translating the behaviour around the critical 
point $\left(  -\frac{3}{5},-\left(  \frac{3}{25}\right) ^{\frac{1}{3}}\right)$, the behaviour (\ref{W3E6e}).

We now infer the behaviour of orbits entering $R_{4}$ as $u\to+\infty$. 
Suppose that there are orbits with the behaviour $v\sim C u^{2}$ as 
$|u|\to\infty$, then $dv/du\sim2 C u$ and from (\ref{phi0system}) 
$dv/du\sim\frac{1+\frac{5C}{3}u^{2}}{Cu^{2}+\frac{1}{3}u^{2}}$, thus $C=1/2$. 
To prove that all orbits, in fact, have this behaviour, we set 
\[
V=\frac{v}{u^2},\ U=\frac{1}{u},\ U\frac{d}{dz}=\frac{d}{ds} 
\]
and the system becomes 
\begin{equation}\label{par-system}
\frac{dU}{ds} =-U\left(  V+\frac{1}{3}\right)  \,,  \quad
\frac{dV}{ds} =U^{3}+(V-2V^{2})\,.
\end{equation}
the transformation is valid locally as long as $U\neq 0$. 
This system has critical points $(  0,\frac{1}{2})$  ,$(  0,0)$ and 
$\left(  \left( \frac{5}{9}\right)^{\frac{1}{3}},-\frac{1}{3}\right)$. 
The point $\left(  0,\frac{1}{2}\right)  $ is a sink. Linearisation 
around it gives a diagonal matrix with eigenvalues $-\frac{5}{6}$ with direction 
$\binom{1}{0}$, and $-1$ with direction $\binom{0}{1}$. The point $\left(  0,0\right)$ is a saddle-point. 
Linearisation around it gives a diagonal matrix with eigenvalues $1$ with direction $\binom{0}{1}$, and $-\frac{1}{3}$ 
with direction $\binom{1}{0}$. Finally, the point $\left(  \left(  \frac{5}{9}\right)^{\frac{1}{3}},-\frac{1}{3}\right)$ 
is an unstable spiral. The phase-portrait for system (\ref{par-system}) is shown in Figure~\ref{par-phase-plane}. 
\begin{figure}[th]
\begin{center}
\input{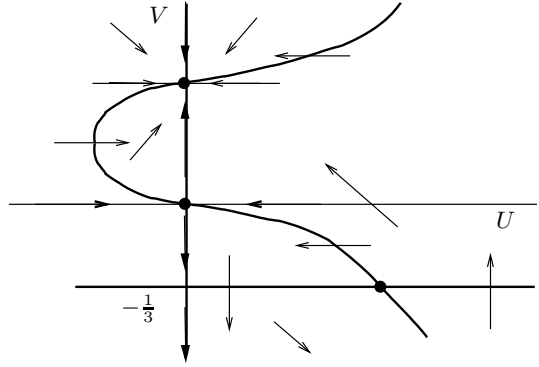}
\end{center}
\caption{Phase portrait associated to (\ref{par-system}) showing the direction
 field and the eigenvectors at non-oscillatory critical points. The thick solid 
lines represent the isoclines.}
\label{par-phase-plane}
\end{figure}
The orbits entering the critical point $(  0,\frac{1}{2})$ have thus $U>0$ and $V>0$, correspond to the 
orbits associated to solutions of (\ref{phi0system}) that at a point are in $R_{4}$ with $u>0$ and $v>0$ for 
increasing $u$. The ones entering with $U<0$ correspond to orbits associated to solutions of (\ref{phi0system})
 that at  a point are in $R_5$ with $u<0$, $v>0$ for decreasing $u$. Observe that these orbits are associated 
to the slow eigenvalue $-\frac{5}{6}$ (the only orbit entering through the fast one, $-1$, is contained in the 
segment $U=0$, $V\in(0,1/2)$ and thus not correspond to solutions of (\ref{phi0system})). Arguing as previously 
we then have that there exists a sufficiently large $C>0$ such that for all orbits with $v(u=0)>0$ then   
\begin{equation}\label{par:behv} 
\left\vert \frac{v}{u^{2}}-\frac{1}{2}\right\vert \leq\frac{C}{|u|}
\quad\mbox{as}\quad u\rightarrow+\infty\,.
\end{equation}
In particular the separatrix $\bar{v}(u)$ satisfies (\ref{par:behv}). 
In order to improve the estimate, we now take in (\ref{phi0system}) $v=\frac{1}{2}u^2+v_1$, 
hence $v_1$ satisfies
\begin{equation}\label{v1:eq}
\frac{d v_1}{du} = \frac{4}{5} \frac{v_1}{u}+ R(u,v_1)\quad \mbox{with}\quad 
R(u,v_1)=
\frac{1}{u} 
\displaystyle{
\frac{\frac{1}{u}-\frac{4}{5}\frac{v_1^2}{u^2} }{\frac{5}{6}+\frac{v_1}{u^2}} 
}
\end{equation}
Let now $\bar{u}>0$ such that $(\bar{u})^3>2C$ where $C$ is as above, and let $L=2C|\bar{u}|^\frac{6}{5}$, so that, 
in particular we can write $v_1(\bar{u})\leq L C |\bar{u}|^\frac{4}{5}L$. Then as long as $u>\bar{u}$ and 
$u^{\frac{6}{5}}<(\bar{u})^\frac{6}{5}/(5C)$ then $v_1(u)<L u^{\frac{4}{5}}$. This is proved by a bootstrap argument: 
if $v_1(u)<L u^{\frac{4}{5}}$ holds, then 
\[
|R(u,v_1)|\leq\frac{6}{5}\left(\frac{1}{|u|^\frac{3}{5}}+\frac{4}{5} L^2\right)\frac{1}{|u|^\frac{7}{5}} 
\]
and hence, integrating (\ref{v1:eq}), one gets that $v_1(u)<L u^{\frac{4}{5}}$ holds provided that 
$0<u^{\frac{6}{5}}<(\bar{u})^\frac{6}{5}/(5C)$.
\end{proof}

\begin{lemma}\label{tozero-bwds} All orbits associated to solutions of 
(\ref{phi0system}) enter $R_4$. Those that are below the separatrix $\bar{v}$ come from the 
critical point $p_e$ as $z\to-\infty$. All other orbits, except for $\bar{v}$, come from the region $R_5$. 
\end{lemma}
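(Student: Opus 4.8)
\emph{Overview.} The plan is the classical phase–plane dissection of the planar subsystem $\dot u=v+\tfrac13u^{2}$, $\dot v=1+\tfrac53uv$ of \eqref{phi0system}, using the separatrix $\bar v$ of Lemma~\ref{separa} as an invariant barrier. First I would show that every orbit enters $R_{4}$; then I would split the remaining orbits according to which side of the graph $v=\bar v(u)$ they lie on, and analyse the backward flow in each case.

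\emph{Reaching $R_4$.} In $R_{4}$ one has $\dot u>0$ and $\dot v>0$, and $(\dot u,\dot v)$ has a fixed sign pattern in each of $R_{1},\dots,R_{5}$. Evaluating the transversal derivatives on the isocline arcs — on $\Gamma_{1}$, $\tfrac{d}{dz}(v+\tfrac13u^{2})=\dot v=1-\tfrac59u^{3}$, positive for $u<u_{e}$ and negative for $u>u_{e}$; on $\Gamma_{2}$, $\tfrac{d}{dz}(1+\tfrac53uv)=\tfrac53\dot u\,v$ — shows the flow crosses the isoclines precisely in the pattern $R_{5}\to R_{4}$ and $R_{4}\to R_{1}\to R_{2}\to R_{3}\to R_{4}$. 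Since one of $u,v$ is strictly monotone in each $R_{i}$ with $i\ne4$, and a short case check shows an orbit cannot escape to infinity before reaching $R_{4}$ (escape occurs only inside $R_{4}$, in the parabolic regime $v\sim u^{2}/2$ of Lemma~\ref{separa}), every orbit meets $R_{4}$; this is the first assertion.

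\emph{The barrier and the absence of periodic orbits.} By Lemma~\ref{separa}(1) there is a unique orbit $v=\bar v(u)$, $u\in\mathbb R$, lying in $R_{4}$; as a graph over $\mathbb R$ it splits the plane into the invariant sets $D^{-}=\{v<\bar v(u)\}$ and $D^{+}=\{v>\bar v(u)\}$. I record that $p_{e}\in D^{-}$ (because $\bar v(u_{e})>v_{e}$, since $\bar v\subset R_{4}$ and $R_{4}\cap\{u=u_{e}\}=\{v>v_{e}\}$), that $R_{1}\cup R_{2}\cup R_{3}\subset D^{-}$ (each lies below $\Gamma_{1}$ or below the right branch of $\Gamma_{2}$, hence below $\bar v$), and that $R_{5}\subset D^{+}$ (compare $\bar v(u)\sim-\tfrac1{2u}$ with the left branch of $\Gamma_{2}$, $v=-\tfrac3{5u}$, as $u\to-\infty$). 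Hence the part of $\partial R_{4}$ in $D^{-}$ consists of arcs of $\Gamma_{1}$ and of the right branch of $\Gamma_{2}$, while the part in $D^{+}$ is the left branch of $\Gamma_{2}$. I would also note that the system has no periodic orbit: a periodic orbit cannot cross $\{u=0\}$ with $\dot u>0$ (such a crossing forces $v>0$, after which the orbit stays in $R_{4}$ with $u\to+\infty$), and it cannot lie in $\{u<0\}$ (there $u$ has no interior maximum, since on $\Gamma_{1}\cap\{u<0\}$ one has $\ddot u=1-\tfrac59u^{3}>0$), so it would lie in $\{u\ge0\}$, where $\operatorname{div}(\dot u,\dot v)=\tfrac73u\ge0$, and Bendixson--Dulac excludes it.

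\emph{Classifying the backward flow.} For an orbit in $D^{-}$, pick $z_{0}$ with $(u,v)(z_{0})\in R_{4}$ and let $z\to-\infty$. While in $R_{4}$, $u$ decreases; if it never leaves $R_{4}$, then $u(z)\to u_{\infty}$, so $\dot u\to0$, $v\to-u_{\infty}^{2}/3$, and this limit point must be stationary, forcing $u_{\infty}=u_{e}$ and $(u,v)\to p_{e}$. Otherwise it can leave $R_{4}$ backward only through $\Gamma_{1}\cap\{u\le u_{e}\}$ into $R_{3}$ (the right branch of $\Gamma_{2}$ is traversed inward by the backward flow, and the left branch lies in $D^{+}$), after which it winds backward $R_{3}\to R_{2}\to R_{1}\to R_{4}$ around $p_{e}$; being confined to $D^{-}$ and having no periodic orbit available, a Poincar\'e--Bendixson argument (equivalently, contraction of the first-return map on $\Gamma_{1}\cap\{u<u_{e}\}$, since $p_{e}$ is a spiral source) forces $(u,v)\to p_{e}$ as $z\to-\infty$. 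For an orbit that is neither in $D^{-}$ nor equal to $\bar v$, it lies in $D^{+}$; letting $z\to-\infty$ from a time in $R_{4}$, it cannot stay in $R_{4}$ (that would give $(u,v)\to p_{e}\in D^{-}$), and its only backward exit from $R_{4}$ is through the left branch of $\Gamma_{2}$ into $R_{5}$, where it then remains for all smaller $z$ (the backward flow moves away from the unique boundary component of $R_{5}$). So it comes from $R_{5}$; the separatrix $\bar v$ itself is excluded since it stays in $R_{4}$ with $u\to-\infty$ as $z\to-\infty$.

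\emph{Main obstacle.} The technical crux is the bookkeeping of the first step — in particular ruling out a finite-$z$ escape to infinity through $R_{1}$, $R_{2}$ or $R_{3}$ before $R_{4}$ is reached — together with the backward-convergence argument in the last step, which must combine the spiral-source nature of $p_{e}$, the confinement by $\bar v$, and the absence of periodic orbits; the remainder is routine sign-chasing across the isoclines.
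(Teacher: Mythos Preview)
Your proof is correct and takes a genuinely different route from the paper on the one nontrivial point, namely ruling out periodic orbits. The paper does this by building an explicit ``spiral box'': starting from a point $(u_0,-\tfrac13 u_0^2)$ on $\Gamma_1$ with $0<u_0<u_e$, it traces vertical and horizontal segments to successive isocline crossings $(u_1,v_1),\dots,(u_4,v_4)$ and checks algebraically that $u_4>u_0$ and $v_4<v_0$, so the box of segments is backward-invariant and the backward return map on $\Gamma_1\cap\{u<u_e\}$ strictly increases $u$; this simultaneously excludes periodic orbits and gives the backward spiral into $p_e$. You instead compute $\operatorname{div}(\dot u,\dot v)=\tfrac73 u$ and localise any hypothetical periodic orbit to $\{u>0\}$ (by your crossing argument at $u=0$ and the second-derivative test on $\Gamma_1\cap\{u<0\}$), then invoke Bendixson--Dulac. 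This is cleaner and avoids the isocline bookkeeping; the trade-off is that the paper's spiral box gives boundedness of the backward orbit in $D^{-}$ for free, whereas in your argument that boundedness must still be supplied to make Poincar\'e--Bendixson apply. You flag this correctly in your ``Main obstacle'' paragraph, and it can indeed be closed by the monotonicity-of-return-map argument you indicate (near $p_e$ the backward return on $\Gamma_1\cap\{0<u<u_e\}$ moves $u$ toward $u_e$; a failure of monotonicity anywhere on that arc would, by the intermediate value theorem, produce a fixed point and hence a periodic orbit), but note that this step tacitly assumes the backward return map is defined on all of $\Gamma_1\cap\{0<u<u_e\}$, i.e.\ that the backward orbit does not blow up while traversing $R_3\to R_2\to R_1$ --- exactly the finite-excursion check you already did for the forward direction, which you should state holds backward as well.
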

\begin{proof} We only need to rule out the existence of periodic orbits around the critical point. 
We then construct a spiral box invariant for decreasing $z$. The direction field 
implies that such an orbit can only be contained in the second quadrant. 
Whence, we start constructing the invariant set in the region $R_3$ at a point $(u_0,v_0)$ on $\Gamma_1$ and such that 
$0<u_0<u_e$ and $v_0=-\frac{1}{3}u_0^2$. We continue by tracing the next point $(u_1,v_1)$ on $\Gamma_2$ 
vertically downwards, i.e. $u_1=u_0$, $v_1=-\frac{3}{5}\frac{1}{u_0}$. The next point $(u_2,v_2)$ on $\Gamma_3$ is traced from 
$(u_1,v_1)$ horizontally to the right;$u_2=\sqrt{-3v_1}=\frac{3}{\sqrt{5 u_0}}$, $v_2=v_1=-\frac{3}{5}\frac{1}{u_0}$,
and so on, then $u_4  =\sqrt{3\frac{\sqrt{u_0}}{\sqrt{5}}}$ and $v_4= -\frac{\sqrt{u_0}}{\sqrt{5}}$. 
Observe that both $v_0>v_4$ and $u_4>u_0$ hold (by the condition 
$u_0<u_e=\left(\frac{3}{\sqrt{5}}\right)^\frac{2}{3}$). Then, since each of the regions enclosed by the segment 
joining $(u_{i},v_{i})$ to $(u_{i+1},v_{i+1})$ for $i=0,1,2,3$ and the 
isoclines have the property that for decreasing $z$ an orbit can only scape the 
region through the isoclines, there cannot be periodic orbits. 
\end{proof}

The proof of the theorem follows by changing to the original variables, Lemma~\ref{separa} and Lemma~\ref{tozero-bwds}.

\def\cprime{$'$}

\end{document}